\newtheorem{theorem}{Theorem}[section]
\newtheorem{proposition}[theorem]{Proposition}
\newtheorem{corollary}[theorem]{Corollary}
\newtheorem{lemma}[theorem]{Lemma}
\newtheorem{step}{Step}
  \theoremstyle{definition}
\newtheorem{definition}[theorem]{Definition}
\newtheorem{example}[theorem]{Example}
\newtheorem{remark}[theorem]{Remark}
\newtheorem{question}[theorem]{Question}
\DeclareMathOperator{\dist}{\mathsf{dist}}
\DeclareMathOperator{\Hdist}{\mathsf{hdist}}
\DeclareMathOperator{\Ends}{\mathsf{E}}
\newcommand{\R}{\mathbb{R}}    
\newcommand{\Z}{\mathbb{Z}}    
\newcommand{\N}{\mathbb{N}}
\newcommand{\calP}{\mathcal{P}}  
\newcommand{\calQ}{\mathcal{Q}}
 \newcommand\Gsetminus{- \kern-0.7em{{}^c}\ }
\begin{document}

\title[]{Quasi-isometric rigidity of subgroups and Filtered ends}

\author{Eduardo Mart\'inez-Pedroza}
\address{Memorial University  St. John's, Newfoundland and Labrador, Canada}
\email{eduardo.martinez@mun.ca}

\author{Luis Jorge S\'anchez Salda\~na}
\address{Departamento de Matem\' aticas, Facultad de Ciencias, Universidad Nacional Aut\'onoma de M\'exico, Circuito Exterior S/N, Cd. Universitaria, Colonia Copilco el Bajo, Delegaci\'on Coyoac\'an, 04510, M\'exico D.F., Mexico}
\email{luisjorge@ciencias.unam.mx}

\subjclass{Primary 20F65, 57M07}

\date{\today}


\keywords{filtered ends of groups, quasi-isometric rigidity, pairs of groups, geometric group theory}

\begin{abstract}
Let $G$ and $H$ be quasi-isometric finitely generated groups and let $P\leq G$; is there a subgroup $Q$ (or a collection of subgroups) of $H$ whose left cosets coarsely reflect the geometry of the left cosets of $P$ in $G$? We explore sufficient conditions for a positive answer.  

The article consider pairs of the form $(G,\mathcal{P})$ where $G$ is a finitely generated group and $\mathcal{P}$ a finite collection of subgroups, there is a notion of quasi-isometry of pairs, and quasi-isometrically characteristic collection of subgroups. A subgroup is qi-characteristic if it belongs to a qi-characteristic collection. Distinct  classes of qi-characteristic collections of subgroups have been studied in the literature on quasi-isometric rigidity, we list in the article some of them and provide other examples. 

The first part of the article proves:   if $G$ and $H$ are finitely generated quasi-isometric groups  and $\mathcal{P}$ is a qi-characteristic collection of subgroups of $G$, then there is a collection of subgroups $\mathcal{Q}$ of $H$ such that $ (G, \mathcal{P})$ and $(H, \mathcal{Q})$ are  quasi-isometric pairs.

The second part of the article  studies the number of filtered ends $\tilde e (G, P)$ of a pair of groups, a notion introduced by Bowditch, and provides an  application of our main result: if $G$ and $H$ are quasi-isometric groups and $P\leq G$ is qi-characterstic, then there is $Q\leq H$ such that $\tilde e (G, P) = \tilde e (H, Q)$.

\end{abstract}
 
\maketitle

\section{Introduction}

Let $G$ be a finitely generated group with a chosen word metric $\dist_G$, and denote the Hausdorff distance between subsets of $G$ by $\Hdist_G$. If $\mathcal{P}$ is a finite collection of subgroups, then $G/\mathcal{P}$ denote the set of left cosets $gP$ with $g\in G$ and $P\in \mathcal{P}$. In this article, we consider pairs of the form $(G, \mathcal{P})$.

Consider two pairs $(G, \mathcal{P})$ and $(H, \mathcal{Q})$. An $(L,C)$-quasi-isometry $q\colon G \to H$ is an \emph{$(L,C,M)$-quasi-isometry of  pairs}  $q\colon (G, \mathcal{P})\to (H, \mathcal{Q})$ if the relation
\begin{equation*}\label{eq:def-qi-relation} \{ (A, B)\in G/\mathcal{P} \times H/\mathcal{Q} \colon \Hdist_H(q(A), B) <M  \} \end{equation*}
satisfies that the projections into $G/\mathcal{P}$ and $H/\mathcal{Q}$ are surjective.

A collection of subgroups $\mathcal{P}$ of $G$ is  \emph{quasi-isometrically characteristic} (or shorter  \emph{qi-characteristic}) in $G$ if $\mathcal{P}$ is finite, each $P\in \mathcal{P}$ has finite index in its commensurator, and 
every  $(L,C)$-quasi-isometry $q\colon G\to G$ is an $(L,C,M)$- quasi-isometry of pairs $q\colon (G,\mathcal{P})\to (G,\mathcal{P})$ where $M=M(G, \mathcal{P}, L,C)$, see~\cref{def:qicharacteristic} and~\cref{thm:qi-characteristic-groups2}.

In the study of quasi-isometric rigidity,   qi-characteristic collections appear in the literature. For example:
\begin{itemize}
    \item Kapovich and Leeb~\cite[Theorem 1.1 and Theorem 4.10]{KaLe97} proved that the geometric decomposition of a Haken manifold is preserved by quasi-isometries. The geometric components induce a qi-characteristic collection of the fundamentamental group of the manifold. 
    
    \item  Behrstock, Dru\c{t}u and Mosher~\cite[Theorems 4.1 and 4.8]{BDM09} proved that relative hyperbolicity with respect to non-relatively hyperbolic groups is a quasi-isometry invariant. Their results imply that the collection of maximal parabolic subgroups is qi-characteristic, see \cref{cor:BDM-RelHyp}.
 
    \item Lafont, Frigerio, and Sisto  \cite[Lemma~2.19 and Proposition~8.35]{LFS15} proved that in an irreducible higher graph manifolds the collection of walls is invariant under quasi-isometry, and the corresponding subgroups form a qi-characteristic collection.
 \end{itemize}

The main result of the first part of this article can be interpreted as an abstraction of a common technique used in the study of quasi-isometric rigidity, and in particular, in the works cited above. 

\begin{theorem}[\cref{cor:characteristic:groups}]\label{intro2:cor:characteristic:groups}
Let $G$ be a finitely generated group, let $\mathcal{P}$ be a finite qi-characteristic collection of subgroups of $G$.  If $H$ is a finitely generated group and $q\colon G \to H$ is a quasi-isometry, then  there is a qi-characteristic collection of subgroups $\mathcal{Q}$ of $H$ such that $q\colon (G, \mathcal{P}) \to (H, \mathcal{Q})$ is a quasi-isometry of pairs.
\end{theorem}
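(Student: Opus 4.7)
Let $\bar{q}\colon H\to G$ be a quasi-inverse of $q$, and write $\mathcal{P}=\{P_1,\ldots,P_n\}$. My plan is to define $\mathcal{Q}$ as the family of coarse stabilizers in $H$ of the images $q(P_i)$, then verify in turn that $q$ is a quasi-isometry of pairs and that $\mathcal{Q}$ inherits the qi-characteristic property from $\mathcal{P}$. Explicitly, for each $i$ set
\[
Q_i := \bigl\{h\in H : \Hdist_H(h\cdot q(P_i),\, q(P_i))<\infty\bigr\},
\]
which is visibly a subgroup of $H$, and let $\mathcal{Q}:=\{Q_1,\ldots,Q_n\}$.

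The first ingredient is a transport lemma. For any $h\in H$ the map $\phi_h\colon G\to G$ defined by $\phi_h(x):=\bar{q}(h\cdot q(x))$ is a self-quasi-isometry whose constants depend only on $q$ and $\bar{q}$. The qi-characteristic hypothesis therefore produces a uniform bound $M_0$ and, for each $i$, a coset $a_{h,i}P_{j(h,i)}$ with $\Hdist_G(\phi_h(P_i),\,a_{h,i}P_{j(h,i)})\le M_0$. Pushing this forward through $q$ and absorbing the constants of $q\circ\bar{q}\approx\mathrm{id}_H$ yields a uniform constant $C_0$ with
\[
\Hdist_H\!\bigl(h\cdot q(P_i),\, q(a_{h,i}P_{j(h,i)})\bigr)\le C_0
\]
for every $h$ and $i$. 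Running this with $h=q(g)$ matches the $q$-image of the coset $gP_i$ to an $H$-translate of $q(P_i)$; running it for arbitrary $h$ matches each $H$-translate $h\cdot q(P_i)$ to a $q$-image of some $\mathcal{P}$-coset.

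Next I show $\Hdist_H(Q_i,q(P_i))$ is uniformly bounded, which promotes $Q_i$ to an honest subgroup replacement for $q(P_i)$. If $h\in Q_i$ then $h\cdot q(P_i)$ and $q(P_i)$ are at finite Hausdorff distance, while the transport lemma gives $\Hdist_H(h\cdot q(P_i),\,q(a_{h,i}P_{j(h,i)}))\le C_0$. Applying $\bar{q}$, the cosets $P_i$ and $a_{h,i}P_{j(h,i)}$ are at finite Hausdorff distance in $G$; the finite-index-in-commensurator clause of the definition of qi-characteristic upgrades this to a uniform bound and forces $j(h,i)=i$. Pushing back along $q$ yields a uniform $R$ with $\Hdist_H(Q_i,q(P_i))\le R$. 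Combined with the transport lemma this directly gives that $q\colon(G,\mathcal{P})\to(H,\mathcal{Q})$ is a quasi-isometry of pairs with $M=M(R,C_0,M_0)$.

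Finally, to see that $\mathcal{Q}$ is qi-characteristic in $H$, any $(L',C')$-quasi-isometry $\psi\colon H\to H$ conjugates via $\bar{q}$ and $q$ to a self-quasi-isometry of $G$ with constants depending only on $L'$, $C'$, $q$, and $\bar{q}$; applying the qi-characteristic hypothesis for $\mathcal{P}$ and the transport lemma in both directions shows that $\psi$ is a quasi-isometry of pairs $(H,\mathcal{Q})\to(H,\mathcal{Q})$ with uniform constants, while the commensurator condition on each $Q_i$ follows from the corresponding condition on $P_i$ together with the coarse equality of $Q_i$ and $q(P_i)$. The core obstacle is the second paragraph above: promoting ``finite Hausdorff distance'' of cosets to a uniform bound is precisely what the finite-index-in-commensurator hypothesis is engineered to provide, and once that is secured the rest of the argument is bookkeeping about composing quasi-isometries with uniform constants.
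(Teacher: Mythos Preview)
Your overall plan matches the paper's: define $Q_i$ as the coarse $H$-stabilizer of $q(P_i)$ and then verify the quasi-isometry of pairs and the qi-characteristic property. The definitions agree (your $Q_i$ is exactly the paper's $St(P_i)$), and your ``transport lemma'' is the paper's observation that the conjugated maps $\phi_h$ are self-quasi-isometries of $G$ with uniform constants. But the argument you give for the key inequality $\Hdist_H(Q_i,q(P_i))\le R$ only establishes one of the two required inclusions, and this is not bookkeeping.

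Your paragraph shows: if $h\in Q_i$ then $\Hdist_H(h\cdot q(P_i),q(P_i))$ is uniformly bounded, hence $h$ lies in a uniform neighbourhood of $q(P_i)$. That is the inclusion $Q_i\subseteq q(P_i)^{+R}$. What is missing is the reverse inclusion $q(P_i)\subseteq Q_i^{+R}$: for an arbitrary $p\in P_i$ you must \emph{produce} an element $h\in Q_i$ close to $q(p)$. There is no reason to expect $q(p)\in Q_i$, since $q(p)\cdot q(P_i)$ and $q(pP_i)=q(P_i)$ are not at bounded Hausdorff distance in general ($q$ is not a homomorphism). The paper isolates this as a separate step (``transitivity of stabilizers''): one uses the $C_0$-transitivity of the $H$-quasi-action on $G$ to move both the basepoint and $p$ near a fixed point, compares the resulting images of $P_i$ to elements of the finite set $\{P_1,\dots,P_n\}$, and pre-selects a finite list of elements $h_{j,k}\in H$ with $\Hdist_G(\phi_{h_{j,k}}(P_j),P_k)$ bounded; the desired $h$ is then built as a word in these together with the transporting elements. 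This is the genuine content of the proof and is absent from your sketch.

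A related confusion appears earlier: you write ``running this with $h=q(g)$ matches the $q$-image of the coset $gP_i$ to an $H$-translate of $q(P_i)$'', but the transport lemma controls $q(g)\cdot q(P_i)$, not $q(gP_i)$, and its output is $q(aP_j)$, which is a $q$-image of a coset rather than an $H$-translate of some $q(P_k)$. These two kinds of objects are exactly what must be compared, and the comparison again goes through the stabilizer-transitivity step above. Once that step is in place your remaining claims (QI of pairs, qi-characteristic for $\mathcal Q$) do indeed follow by the uniform-constant bookkeeping you indicate; without it the argument does not close.
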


For additional examples of qi-characteristic collections see \cref{prop:example01}. The proof of  \cref{intro2:cor:characteristic:groups} is the content of~\cref{sec:qi-characteristic-Thm}, and can be described as follows.  The left multiplication action of $H$ on itself by isometries is pushed  via the quasi-isometry $q$ and its quasi-inverse to a quasi-action of $H$ on $G$ with uniform constants. The qi-characteristic hypothesis implies that there is a finite collection of left cosets $\mathcal{F}$ in $G/\mathcal{P}$ whose $H$-translates  reach all elements of $G/\mathcal{P}$ up to uniform Hausdorff distance. For each $gP\in \mathcal{F}$, the collection of elements of $H$ that fixed $gP$ up to finite Hausdorff distance turns out to be a subgroup. In this way the subgroups in $\mathcal{Q}$ arise as $H$-stabilizers of the elements in $\mathcal{F}$. Then an argument shows that $q\colon (G,\mathcal{P}) \to (H, \mathcal{Q})$ is a quasi-isometry of pairs. Since $\mathcal{P}$ is qi-characteristic in $G$, then it   follows that $\mathcal{Q}$ is qi-characteristic in $H$ as well. 

We traced back the idea of the argument proving \cref{intro2:cor:characteristic:groups} to the work of Schwartz~\cite{Sch95} related to quasi-isometric rigidity of finite volume hyperbolic manifolds. Our argument follows patterns in  the works of Kapovich and Leeb~\cite{KaLe97} on quasi-isometric rigidity of Haken manifolds,  Dru\c{t}u and Sapir~\cite[\S 5.2]{DS05} on quasi-isometric rigidity of relative hyperbolicity, and   Mosher, Sageev and White~\cite{MSW11} on quasi-isometric rigidity of fundamental groups of finite graphs of groups.  Recent work by A. Margolis~\cite{Ma19} contains an analogous statement to \cref{intro2:cor:characteristic:groups} in the case that $G$ is a Poincare duality group and $\calP$ consists of a single subgroup that is \emph{almost normal}, a condition, that is in a sense opposite to qi-characteristic.

As a sample application of ~\cref{intro2:cor:characteristic:groups}, we prove a quasi-isometric rigidity result for the number filtered ends over qi-subcharacteristic subgroups which are defined in the next paragraph. Let $G$ be a finitely generated group and let $P$ be a subgroup.  The  \emph{number of filtered ends $\tilde e(G,P)$} of the pair $(G, P)$ was introduced by Bowditch~\cite{Bow02}, under the name of \emph{coends}, in his study of JSJ splittings of one-ended groups. The number of filtered ends coincides with \emph{the algebraic  number of  ends of the pair $(G,P)$} introduced by  Kropholler and Roller~\cite{KrRo89}, see~\cite{Bow02} for the equivalence. The number of filtered ends does not coincide with the  \emph{number of relative ends $e(G,P)$} introduced by Houghton~\cite{Hou74}, but there are several relations including the inequality $e(G,P) \leq \tilde e(G,P)$ and equality in the case that $P$ is  normal and finitely generated, for an account see Geoghegan's book~\cite[Chapter~14]{Ge08}.

A subgroup $P$ of a finitely generated group $G$ is \emph{ qi-subcharacteristic} if $P$ belongs to a qi-characteristic collection of subgroups of $G$, see~\cref{thm:qi-characteristic-groups2}.

\begin{corollary}
\label{intro2:thm:final:application}
Let $G$ and $H$ be finitely generated quasi-isometric groups. If $P\leq G$ is a qi-subcharacteristic subgroup, then there is a qi-subcharacteristic subgroup $Q\leq H$ such that $\tilde e(G,P)= \tilde e(H,Q)$.
\end{corollary}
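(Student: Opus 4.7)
The plan is to combine \cref{intro2:cor:characteristic:groups} with a quasi-isometric invariance property of Bowditch's number of filtered ends.

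Since $P$ is qi-characteristic, there is by definition a finite qi-characteristic collection $\mathcal{P}$ of subgroups of $G$ with $P\in\mathcal{P}$. Fix a quasi-isometry $q\colon G\to H$. Applying \cref{intro2:cor:characteristic:groups} produces a qi-characteristic collection $\mathcal{Q}$ of $H$ such that $q\colon(G,\mathcal{P})\to(H,\mathcal{Q})$ is a quasi-isometry of pairs. By the definition of quasi-isometry of pairs, the associated relation on $G/\mathcal{P}\times H/\mathcal{Q}$ projects surjectively onto $G/\mathcal{P}$; in particular, applied to the coset $P\in G/\mathcal{P}$, this yields some $h\in H$ and $Q\in\mathcal{Q}$ with $\Hdist_H(q(P),hQ)<M$. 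Post-composing $q$ with left multiplication by $h^{-1}$, which is an isometry of $H$, produces a quasi-isometry $q'\colon G\to H$ satisfying $\Hdist_H(q'(P),Q)<\infty$. Since $Q$ is a member of the qi-characteristic collection $\mathcal{Q}$, it is a qi-characteristic subgroup of $H$, supplying the candidate $Q$ demanded by the corollary.

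It then suffices to establish the equality $\tilde e(G,P)=\tilde e(H,Q)$. This is a quasi-isometric invariance statement for the number of filtered ends: any quasi-isometry $q'\colon G\to H$ with $\Hdist_H(q'(P),Q)<\infty$ should induce a bijection between the filtered ends of $(G,P)$ and those of $(H,Q)$. Working with a description of $\tilde e(G,P)$ as the number of ends of the inverse system $\{G\setminus N_r(P)\}_{r\geq 0}$ in a Cayley graph of $G$ (equivalent to Bowditch's coends and to the Kropholler--Roller algebraic number of ends of the pair), the hypothesis on $q'$ ensures that $r$-neighborhoods of $P$ are mapped into and cover suitable neighborhoods of $Q$, so the pushforward of the inverse system on $G$ is cofinal in the analogous inverse system on $H$. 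The induced map on limits is a bijection on ends.

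The main obstacle is the last step: carefully translating Bowditch's combinatorial definition of $\tilde e$ into the coarse-geometric statement needed to see that it only depends on the quasi-isometry class of the pair $(G,P)$. Once that invariance is in place, the corollary follows immediately by combining it with the extraction of $Q\in\mathcal{Q}$ above. I anticipate this invariance to be the content developed in the second part of the article, while the passage from a quasi-isometry of groups to a quasi-isometry of pairs is the direct content of \cref{intro2:cor:characteristic:groups}.
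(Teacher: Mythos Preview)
Your proposal is correct and follows essentially the same route as the paper's proof: extract a qi-characteristic collection $\mathcal{Q}$ on $H$ via \cref{intro2:cor:characteristic:groups}, pick $Q\in\mathcal{Q}$ with $\Hdist_H(q(P),hQ)<\infty$, translate by $h^{-1}$, and conclude by the quasi-isometric invariance of filtered ends. The invariance step you anticipate is exactly \cref{intro:cor:qiinvariance:relativeends} (equivalently \cref{thm:endsFunctor2}), which the paper develops in \cref{section:ends:of:metric:pairs} and invokes at precisely this point.
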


This corollary is a consequence of~\cref{intro2:cor:characteristic:groups} together with the main result of~\cref{section:ends:of:metric:pairs} that is stated below. 
We develop the notion of  filtered ends $\Ends (X, C)$ for pairs  where $X$ is a metric space and $C$ is a subspace, parallel to the treatment by Geoghegan~\cite{Ge08}; the main difference is that we use arbitrary metric spaces instead of CW-complexes. This alternative approach allows to study this invariant in the framework of coarse geometry. For a pair $(G, P)$ where $G$ is a finitely generated group with a word metric and $P$ is a subgroup, $\tilde e (G,P)$ is the cardinality of $\Ends(G, P)$. In \cref{section:filtered:ends:coends} we explain the equivalence with Bowditch notion of coends~\cite{Bow02}.

\begin{theorem}[\cref{thm:endsFunctor2}]\label{intro:cor:qiinvariance:relativeends}
Let $X$ and $Y$ be metric spaces, and $C\subseteq X$ and $D\subseteq Y$. If $f\colon X\to Y$ is a quasi-isometry such that $\Hdist(f(C),D)$ is finite, then $\Ends(f)\colon \Ends(X,C)\to\Ends(Y,D)$ is a bijection.
\end{theorem}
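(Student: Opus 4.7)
The plan is to prove the bijection via the standard three-step quasi-isometry invariance argument: first, verify that $f$ induces a well-defined map $\Ends(f)$ between the inverse-limit systems defining $\Ends(X,C)$ and $\Ends(Y,D)$; second, show that two quasi-isometries of pairs which are at bounded distance induce the same map on filtered ends; third, apply these to a quasi-inverse $\bar f$ of $f$ to produce a two-sided inverse for $\Ends(f)$.

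For the first step, presumably $\Ends(X,C)$ is defined as an inverse limit whose levels are the coarse path components of neighbourhoods of $C$ with a bounded set removed, indexed by the radius of the neighbourhood, the scale of coarse connectedness, and the bounded set. The hypothesis $\Hdist(f(C),D)<\infty$ is exactly the ingredient needed to see that $f$ sends an $r$-neighbourhood of $C$ into an $(Lr+C_0+\Hdist(f(C),D))$-neighbourhood of $D$; together with the $(L,C_0)$-quasi-isometry property, which sends $s$-coarsely connected sets to $(Ls+C_0)$-coarsely connected sets and reflects bounded sets into bounded sets, this produces a morphism between the defining inverse systems (up to a cofinal reindexing) and hence a well-defined map $\Ends(f)\colon \Ends(X,C)\to\Ends(Y,D)$. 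Functoriality $\Ends(g\circ f)=\Ends(g)\circ\Ends(f)$ then follows by chasing constants.

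For the second step, if $f,g\colon (X,C)\to(Y,D)$ are two quasi-isometries of pairs with $\sup_{x}\dist(f(x),g(x))<\infty$, then for any coarsely connected subset $A$ of a neighbourhood of $C$ missing a sufficiently large bounded set, the images $f(A)$ and $g(A)$ can be joined pointwise by short bridges, hence lie in the same coarse path component of a common neighbourhood of $D$ missing a common bounded set of $Y$. Therefore $f$ and $g$ represent the same class at every level of the inverse system, so $\Ends(f)=\Ends(g)$. The third step is now routine: choose a quasi-inverse $\bar f$; since $\bar f$ is also a quasi-isometry with $\Hdist(\bar f(D),C)<\infty$, step one applies to it, and the compositions $\bar f\circ f$ and $f\circ \bar f$ are uniformly close to the respective identities as quasi-isometries of pairs, so by step two and functoriality $\Ends(\bar f)\circ\Ends(f)=\mathrm{id}_{\Ends(X,C)}$ and $\Ends(f)\circ\Ends(\bar f)=\mathrm{id}_{\Ends(Y,D)}$.

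The main obstacle I expect is bookkeeping at the level of the inverse system: one must check that, although $f$ does not match neighbourhood radii, connectedness scales, or bounded sets on the nose, the map it induces between two different cofinal subsystems is well-defined and respects the bonding maps. I would isolate this as a short technical lemma asserting that an $(L,C_0)$-quasi-isometry of pairs with Hausdorff constant $H$ induces a morphism of inverse systems along a cofinal shift $(r,s,K)\mapsto(Lr+C_0+H,Ls+C_0,f(K))$, and that $\bar f(K)$ is bounded whenever $K$ is, so that preimages of a cofinal family remain cofinal. Once this lemma is in place, everything above reduces to the familiar proof of quasi-isometry invariance of the space of ends of a single metric space, applied to the pair setting.
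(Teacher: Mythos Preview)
Your three-step strategy---establish functoriality of $\Ends$, show that maps at bounded distance induce the same morphism, then apply this to a quasi-inverse---is exactly the paper's route (packaged there as the functor of \cref{thm:endsFunctor}, the close-to-identity result \cref{prop:UltimaDeVerdad2}, and the short argument of \cref{cor:qiinvariance:relativeends}). The only point to correct is your guessed definition: there is no separate bounded-set index $K$; one takes the unbounded $\sigma$-coarse components of $X\setminus C^{+\mu}$, forms the \emph{inverse} limit over $\mu$, and then the \emph{direct} limit over the scale $\sigma$, so the cofinal reindexing you anticipate need only track the pair $(\sigma,\mu)$.
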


\begin{proof}[Proof of~\cref{intro2:thm:final:application}]
Let $q\colon G \to H$ be a quasi-isometry. Since  $P$ is a qi-character\-istic subgroup, there is a qi-collection $\mathcal{P}$ of $G$ that contains $P$. By \cref{intro2:cor:characteristic:groups}  there is a qi-characteristic  collection $\calQ$ of $H$   such that $q\colon (G, \mathcal{P}) \to (H,\mathcal{Q})$ is a quasi-isometry of pairs. After composing $q$ with a left translation of $H$ if necessary, there is a subgroup $Q$ in $\mathcal{Q}$ such that $\Hdist_H(q(P), Q)<\infty$.   Then \cref{intro:cor:qiinvariance:relativeends} implies that $\tilde e(G,P) = \tilde e (H,Q)$.
\end{proof}
 
\subsection*{Organization} The rest of the article is organized in four parts.
\cref{sec:qi-characteristic} discusses the definitions of quasi-isometry of pairs,  qi-characteristic collections of subspaces in the context of metric spaces, and a   characterization of qi-characteristic collections of subgroups, \cref{thm:qi-characteristic-groups2}. \cref{sub:Qi-char02} discusses some examples of qi-characteristic collections  arising from the theory of relatively hyperbolic groups, \cref{cor:BDM-RelHyp} and \cref{cor:RemovingQIsubgroup}. That section also cites other examples and non-examples of qi-characteristic collections from the literature. \cref{sec:qi-characteristic-Thm} contains the proof of the main result of the article, \cref{thm:qi-characteristic}, from which \cref{intro2:cor:characteristic:groups} is an immediate corollary.  The last part of the article, \cref{section:ends:of:metric:pairs}, develops the notion of filtered ends, from a metric perspective, in order to prove \cref{thm:endsFunctor2}, which contains the statement of \cref{intro:cor:qiinvariance:relativeends}.

\subsection*{Acknowledgements} The authors thank the anonymous referee for comments and suggestions that improved the quality of the manuscript.  The authors thank Juan Felipe Rodriguez-Quinche  for comments and proof reading of parts of an earlier version of the  manuscript. We also thank Sam Hughes for comments on this work, and for bringing to our attention an improvement of Proposition~\ref{prop:finite:generation}. The authors thank Anthony Genevois for pointing out the work in~\cite{Ge19,GT21} that provides examples of qi-characteristic collections, see \cref{prop:example01}. We also thank  Nicholas Touikan for comments and helpful discussions on the topics of the note. E.M.P acknowledges funding by the Natural Sciences and Engineering Research Council of Canada, NSERC.   
LJSS was supported by grant PAPIIT-IA101221.

\section{Qi-characteristic collections} \label{sec:qi-characteristic}

This part discusses the definitions of quasi-isometry of pairs, and qi-characteristic collections of subspaces in the context of metric spaces. \cref{sub:Qi-char01} contains the main result of this part, \cref{thm:qi-characteristic-groups2},   which is a  characterization of qi-characteristic collections of subgroups. \cref{sub:Qi-char03} contains some results on quasi-isometries of pairs for future reference.

\begin{definition}\label{def:quasiIsometry}
Let $X$ and $Y$ be metric spaces. A map $q\colon X \to Y$ is an \emph{$(L, C)$-quasi-isometry} if:
\begin{enumerate}
    \item $q$ is $(L,C)$-coarse Lipschitz: 
    \[\dist(q(x),q(y)) \leq L \dist(x,y)+C\]
    \item There is an $(L,A)$-coarse Lipschitz map $\bar q \colon Y\to X$, called a quasi-inverse of $q$, such that:
    \[ \dist(\bar q \circ q (x), x) \leq C, \qquad \dist(q\circ \bar q (y), y) \leq C\]
    for all $x\in X$ and $y\in Y$.
\end{enumerate}
A map $q\colon X \to Y$ is an  \emph{$(L,C)$-quasi-isometric map} if the restriction $q\colon X \to q(X)$ is an $(L,C)$-quasi-isometry.
\end{definition}

The following definitions borrow ideas of the work of Kapovich and Leeb on the quasi-isometry invariance of the geometric decomposition of Haken manifolds~\cite[\S 5.1]{KaLe97}. These ideas have been used in a similar fashion in other works, for example~\cite{DS05, BDM09, MSW11, LFS15}.

\begin{definition}\label{defn:quasi-isometry-pairs}
Let $X$ and $Y$ be metric spaces, let $\mathcal{A}$ and $\mathcal{B}$ be   collections of subspaces of $X$ and $Y$ respectively.  A  quasi-isometry $q\colon X\to Y$ is a \emph{quasi-isometry  of pairs} $q\colon (X,\mathcal{A}) \to (Y,\mathcal{B})$ if there is $M>0$:
\begin{enumerate}
    \item For any $A\in \mathcal{A}$, 
    the set $\{ B\in \mathcal{B} \colon \Hdist_Y(q(A), B) <M \}$ is non-empty.      
    \item For any $B\in \mathcal{B}$, 
    the set $\{ A\in \mathcal{A} \colon \Hdist_Y(q(A), B) <M \}$ is non-empty. 
\end{enumerate}
In this case, if $q\colon X \to Y$ is a $(L,C)$-quasi-isometry, then $q\colon (X, \mathcal{A})\to (Y,\mathcal{B})$ is called a \emph{$(L,C,M)$-quasi-isometry}. If there is a quasi-isometry of pairs  $(X,\mathcal{A}) \to (Y,\mathcal{B})$ we say  that $(X,\mathcal{A})$ and  $(Y,\mathcal{B})$ are \emph{quasi-isometric pairs}.
\end{definition}

Quasi-isometries of pairs have recently attracted the attention of other researchers in group theory,  see for example~\cite{BuHr21, HaHr19, GT21, Ge19} (in the last reference the notion appears implicitly, see \cref{prop:example01}). 

\begin{definition}\label{def:qicharacteristic}
Let $X$ be a metric space with metric $\dist$. A collection of subspaces $\mathcal{A}$ is called \emph{quasi-isometrically characteristic}, or for short \emph{qi-characteristic}, if the following properties hold:
\begin{enumerate}
\item For any $L\geq 1$ and $C\geq 0$ there is $M=M(L,C)>0$ such that any $(L,C)$-quasi-isometry $q\colon X\to X$ is an $(L,C,M)$-quasi-isometry of pairs $q\colon (X, \mathcal{A}) \to (X, \mathcal{A})$.  

\item Every bounded subset $B\subset X$ intersects only finitely many non-coarsely equivalent elements of $\mathcal{A}$; where $A, A' \in \mathcal{A}$ are coarsely equivalent if their Hausdorff distance is finite.

\item  For any $A\in \mathcal{A}$ the set $\{A'\in \mathcal{A} \colon \Hdist(A, A')<\infty\}$ is bounded as a subspace of $(\mathcal{A}, \Hdist)$.
\end{enumerate}
\end{definition} 

\begin{remark}  
The first condition in \cref{def:qicharacteristic} could be interpreted as non-positively curved property of  $(X, \mathcal{A})$. That the constant $M$ only depends on $L$ and $C$ is reminiscent of the property that in  $\delta$-hyperbolic spaces the images of any pair of  $(L,C)$-quasi-geodesics $\mathbb{R} \to X$ are either at Hausdorff distance bounded by a constant $D=D(\delta, L, C)$, or they are at infinite Hausdorff distance.  On the other hand, there are simple examples of pairs $(X, \mathcal{A})$ where every $(L,C)$-quasi-isometry $q\colon X \to X$ is a $(L,C,M_q)$-quasi-isometry of pairs $(X, \mathcal{A})$, the set of constants $M_q$ is unbounded, and the second and third conditions of \cref{def:qicharacteristic} hold for  $\mathcal{A}$; see \cref{NonExamples}\eqref{Example01}. \end{remark}

The verification of the following proposition is left to the reader.

\begin{proposition}\label{prop:SecondMistake}
Suppose that $q\colon (X, \mathcal{A}) \to (Y, \mathcal{B})$ is a  quasi-isometry of pairs.
Then $\mathcal{A}$ is qi-characteristic if and only if $\mathcal{B}$ is qi-characteristic.
\end{proposition}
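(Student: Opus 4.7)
The plan is to prove the forward implication ($\mathcal{A}$ qi-characteristic $\Rightarrow$ $\mathcal{B}$ qi-characteristic); the reverse follows by symmetry once one observes that a quasi-inverse $\bar q\colon Y\to X$ of $q$ is itself a quasi-isometry of pairs $(Y,\mathcal{B})\to(X,\mathcal{A})$ with constants controlled by those of $q$, a short direct verification from \cref{defn:quasi-isometry-pairs}. Fix notation so that $q$ is an $(L_0,C_0,M_0)$-quasi-isometry of pairs with quasi-inverse $\bar q$, and check each of the three conditions of \cref{def:qicharacteristic} for $\mathcal{B}$.

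For condition (1), given any $(L,C)$-quasi-isometry $f\colon Y\to Y$, form the conjugate $g=\bar q\circ f\circ q\colon X\to X$, which is an $(L',C')$-quasi-isometry with $L',C'$ depending only on $L,C,L_0,C_0$. By hypothesis there is $M'=M'(L',C')$ so that $g$ is an $(L',C',M')$-quasi-isometry of pairs on $\mathcal{A}$. To transport the conclusion back through $q$, given $B\in\mathcal{B}$ choose $A\in\mathcal{A}$ with $\Hdist_Y(q(A),B)<M_0$, then $A'\in\mathcal{A}$ with $\Hdist_X(g(A),A')<M'$, and finally $B'\in\mathcal{B}$ with $\Hdist_Y(q(A'),B')<M_0$. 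A triangle-inequality chase using the coarse Lipschitz estimates for $q$ and $\bar q$ yields $\Hdist_Y(f(B),B')\le M$ for some $M$ depending only on $L,C,L_0,C_0,M_0$, hence only on $L$ and $C$. The surjectivity on the $\mathcal{B}$-side is obtained by the mirror argument starting from an arbitrary $B'\in\mathcal{B}$.

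For condition (2), let $D\subseteq Y$ be bounded. If $B\in\mathcal{B}$ meets $D$, any $A\in\mathcal{A}$ with $\Hdist_Y(q(A),B)<M_0$ must meet the bounded set $D'\subseteq X$ consisting of points within $L_0M_0+2C_0$ of $\bar q(D)$. Condition (2) for $\mathcal{A}$ then bounds the number of coarse equivalence classes of $\mathcal{A}$ meeting $D'$, and since $q$ is coarse Lipschitz the assignment $[A]\mapsto[B]$ is well defined on coarse classes; hence only finitely many coarse classes of $\mathcal{B}$ meet $D$. For condition (3), given $B\in\mathcal{B}$ with $A$ chosen as above, any $B'$ with $\Hdist_Y(B,B')<\infty$ admits some $A''\in\mathcal{A}$ with $\Hdist_Y(q(A''),B')<M_0$, and the coarse Lipschitz property of $\bar q$ forces $\Hdist_X(A,A'')<\infty$. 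Condition (3) for $\mathcal{A}$ supplies a uniform bound on $\Hdist_X(A,A'')$, and pushing this bound forward through $q$ gives a uniform bound on $\Hdist_Y(B,B')$, as required.

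No deep obstacle arises: the content is bookkeeping. The one point that requires care is the constant-tracking in condition (1), which succeeds precisely because the qi-characteristic constant for $\mathcal{A}$ depends only on the parameters of the conjugated quasi-isometry $g$, and those parameters are determined by $(L,C)$ together with the fixed data $(L_0,C_0,M_0)$ of $q$.
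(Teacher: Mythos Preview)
The paper does not give a proof of this proposition at all; it states immediately before it that ``The verification of the following proposition is left to the reader.'' Your sketch is precisely the routine verification the authors have in mind: conjugate a self-quasi-isometry of $Y$ through $q$ and $\bar q$ to use condition~(1) for $\mathcal{A}$, and pull back bounded sets and coarse-equivalence classes through $\bar q$ for conditions~(2) and~(3). The constant-tracking you flag in condition~(1) is exactly the point, and it goes through as you say. One cosmetic remark: in condition~(2) the natural direction of the assignment is $B\mapsto [A]$ (choose $A$ with $\Hdist_Y(q(A),B)<M_0$), and one then observes that this map is injective on coarse classes, bounding the number of classes of $\mathcal{B}$ meeting $D$ by the number of classes of $\mathcal{A}$ meeting $D'$; your phrasing ``$[A]\mapsto[B]$'' reverses this, but the content is the same.
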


 \begin{example}
 \label{NonExamples}   The following examples illustrate that conditions (1)-(3) in \cref{def:qicharacteristic} are independent of each other.  
 \begin{enumerate} 
\item \label{Example01} Consider the pair $(X, \mathcal{A})$ where $X$ is the $n$-dimensional Euclidean space and $\mathcal{A}$ consists of a single compact subset of $X$. Note that  any quasi-isometry $q\colon X \to X$ is a quasi-isometry of the pair $q\colon (X, \mathcal{A})  \to (X, \mathcal{A})$. On the other hand, the collection $\mathcal{A}$ is not qi-characteristic in $X$, the first condition fails by considering translations,  while the  conditions (2) and (3) hold.

\item Let $X=\mathbb H^n$ be the n-dimensional hyperbolic space and let $\mathcal A$ be the set of all geodesic lines in $X$. Then condition (2) does not hold while conditions (1) and (3) hold. Condition (1) holds because $\mathbb{H}^n$ is a hyperbolic space in the sense of Gromov.

\item Let $X$ be the real line and let $\mathcal{A}=\{ \{n\} \colon n\in \Z \}$. Then conditions $(1)$ and $(2)$ hold, but $(3)$ does not.
\end{enumerate}
 \end{example}

\subsection{Qi-characteristic collections of subgroups} \label{sub:Qi-char01}

\begin{definition}\label{def:group:qicharacteristic} Let $G$ be a finitely generated group, and let $\mathcal{P}$ be a collection of subgroups of $G$. The collection $\mathcal{P}$ is  \emph{qi-characteristic} if $G/\mathcal{P}$ is a qi-characteristic collection of subspaces of $G$. 

A subgroup of $G$ is a \emph{qi-subcharacteristic subgroup} if it belongs to a qi-characteristic collection of subgroups of $G$.
\end{definition}

\begin{remark}[Simplified Notation]
Let $G$ be a finitely generated group, and let $\mathcal{P}$ be a collection of subgroups.  For the rest of this section,
by a quasi-isometry of pairs $(G, \mathcal{P}) \to (G, \mathcal{P})$ we mean a quasi-isometry of pairs $(G, G/\mathcal{P}) \to (G, G/\mathcal{P})$ in the sense of \cref{defn:quasi-isometry-pairs}. 
\end{remark}

Recall that the \emph{commensurator} of a subgroup $P$ of a group $G$ is the subgroup of $G$ defined as  
\[Comm_G(P)=\{g\in G \colon P\cap gPg^{-1} \text{ is a finite index subgroup of $P$ and $gPg^{-1}$ }\}.\]

\begin{theorem}\label{thm:qi-characteristic-groups2}
Let $G$ be a finitely generated group. A collection of subgroups $\mathcal{P}$ is qi-characteristic if and only if 
\begin{enumerate}
    \item For any $L\geq 1$ and $C\geq 0$ there is $M=M(L,C)>0$ such that any $(L,C)$-quasi-isometry $q\colon G\to G$ is an $(L,C,M)$-quasi-isometry of pairs $q\colon (G, \mathcal{P}) \to (G, \mathcal{P})$.

    \item $\mathcal{P}$ is finite.
    
    \item Every $P\in \mathcal{P}$ has finite index in its commensurator.

\end{enumerate}
\end{theorem}

The statement of the theorem is a consequence of the following lemmas. 
\begin{lemma}\cite[Lemma 2.2]{MSW11}
\label{lem:Perpendicular}
Let $G$ be a finitely generated group and let $B$ and $C$ subgroups. Then for any $k>0$ there is $M>0$ such that 
\[ B\cap \mathcal{N}_k(C)  \subseteq \mathcal{N}_M(B\cap C)\]
where  $\mathcal{N}_k(C)$ and  $\mathcal{N}_M(B\cap C)$ denote the closed neighborhoods of $C$ and $B\cap C$ in $G$ with respect to $\dist_G$ respectively. 
\end{lemma}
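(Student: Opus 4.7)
The plan is to exploit the local finiteness of the Cayley graph of $G$: the closed ball $S := \{s \in G \colon |s| \leq k\}$ is finite, and this will reduce the estimate to choosing a bounded collection of coset representatives, one for each $s \in S$.

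First I would decompose the left-hand side as
\[ B \cap \mathcal{N}_k(C) = \bigcup_{s \in S} B \cap (Cs). \]
For each $s \in S$ with $B \cap (Cs) \neq \emptyset$, the key observation is that $B \cap (Cs)$ is a single right coset of $B \cap C$: if $b, b' \in B \cap (Cs)$, writing $b = cs$ and $b' = c's$ with $c, c' \in C$ yields $bb'^{-1} = cc'^{-1} \in B \cap C$, and the reverse containment $(B \cap C)b' \subseteq B \cap (Cs)$ is immediate. Consequently, for any choice of $b_s \in B \cap (Cs)$, one has $B \cap (Cs) = (B \cap C)\, b_s$.

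Next I would let $S' \subseteq S$ be the finite subset of those $s$ for which $B \cap (Cs) \neq \emptyset$, fix an arbitrary representative $b_s \in B \cap (Cs)$ for each $s \in S'$, and set $M := \max_{s \in S'} |b_s|$; this $M$ depends only on $k$ (and the fixed data $G, B, C$). To finish, given $b \in B \cap \mathcal{N}_k(C)$, there exists $s \in S'$ with $b \in B \cap (Cs)$, hence $b = x b_s$ for some $x \in B \cap C$, and left-invariance of $\dist_G$ gives $\dist_G(b,x) = |b_s| \leq M$, so $b \in \mathcal{N}_M(B \cap C)$.

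I do not anticipate a serious obstacle here: the only non-routine step is the right-coset identification $B \cap (Cs) = (B \cap C)\, b_s$, after which the statement is an immediate consequence of the finiteness of balls in the Cayley graph of $G$.
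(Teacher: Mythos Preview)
Your proof is correct. The paper does not include its own proof of this lemma; it is stated with citations to \cite[Lemma~4.7]{MP07} and \cite[Proposition~9.4]{HK10} and used as a black box. Your argument---reducing $\mathcal{N}_k(C)$ to the finite union $\bigcup_{s\in S} Cs$ over the ball $S$ of radius $k$, and then observing that each nonempty $B\cap Cs$ is a single right coset of $B\cap C$---is exactly the standard proof found in those references, so there is nothing to compare.
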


\begin{lemma}\label{lem:Commensuartor-HDistance}
Let $G$ be a finitely generated group. 
For any subgroup $P$ of $G$ and $g\in G$,   $\Hdist(P,gP)<\infty$ if and only if $g\in Comm_G(P)$. 
\end{lemma}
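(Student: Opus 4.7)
The plan is to reduce both directions to an analysis of the conjugate subgroup $gPg^{-1}$ in place of the left coset $gP$, exploiting the elementary identity
\[\dist_G(gp, gpg^{-1}) = |(gp)^{-1}(gpg^{-1})| = |g|\]
valid for every $p\in P$. This gives $\Hdist_G(gP, gPg^{-1})\leq |g|$, so by the triangle inequality the condition $\Hdist_G(P, gP)<\infty$ is equivalent to $\Hdist_G(P, gPg^{-1})<\infty$. This reformulation is useful because conjugation behaves well with respect to intersections, whereas cosets do not.

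For the forward direction, I would assume $g\in Comm_G(P)$ and set $H = P\cap gPg^{-1}$, which by hypothesis has finite index in both $P$ and $gPg^{-1}$. Decomposing $P$ as a finite union of right cosets $P=\bigsqcup_{i=1}^n Hp_i$ and using left-invariance of $\dist_G$, one obtains $\dist_G(hp_i, h)=|p_i|$ uniformly in $h\in H$, hence $P\subseteq\mathcal{N}_K(H)$ for $K=\max_i|p_i|$. The analogous argument applied to $gPg^{-1}$, combined with the triangle inequality, yields $\Hdist_G(P, gPg^{-1})<\infty$, as required.

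For the converse, suppose $\Hdist_G(P, gP)<\infty$ and hence $\Hdist_G(P, gPg^{-1})\leq k$ for some $k$. Applying \cref{lem:Perpendicular} with $B = gPg^{-1}$ and $C = P$ produces $M>0$ such that $gPg^{-1} = gPg^{-1}\cap\mathcal{N}_k(P)\subseteq\mathcal{N}_M(gPg^{-1}\cap P)$. I would then upgrade this bounded-neighborhood containment to a finite-index statement by a standard ball-counting argument: each $x\in gPg^{-1}$ admits some $h\in gPg^{-1}\cap P$ with $h^{-1}x$ lying in the finite set $gPg^{-1}\cap B_G(e,M)$, so every right coset of $gPg^{-1}\cap P$ in $gPg^{-1}$ has a representative in this finite ball, forcing $[gPg^{-1}:gPg^{-1}\cap P]<\infty$. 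Applying \cref{lem:Perpendicular} symmetrically (with $B$ and $C$ swapped) yields $[P:P\cap gPg^{-1}]<\infty$, and together these show $g\in Comm_G(P)$.

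The main subtlety lies in the converse, where \cref{lem:Perpendicular} does the heavy lifting; the only remaining point is the routine fact that a subgroup $H\leq K$ has finite index whenever $K$ lies in a bounded neighborhood of $H$ in a finitely generated ambient group, proved by the ball-counting argument indicated above. I expect no serious obstacle beyond correctly invoking the cited lemma on both sides to obtain the two finite-index conclusions.
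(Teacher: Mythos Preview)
Your proposal is correct and follows essentially the same approach as the paper: reduce from $gP$ to $gPg^{-1}$, use \cref{lem:Perpendicular} for the implication from finite Hausdorff distance to commensurability, and use the elementary finite-index coset decomposition for the reverse implication. If anything, you are more explicit than the paper, which packages the neighborhood-to-finite-index step into an ``immediate'' equivalence and only mentions one application of \cref{lem:Perpendicular}; your insistence on applying it twice (once for each index) is the right level of care.
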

\begin{proof}
The following two equivalences are immediate:
\begin{enumerate}
    \item $\Hdist(P,gP)<\infty$ if and only if $\Hdist(P, gPg^{-1})<\infty$.
    \item $P$ is contained in finite neighborhood of $P\cap gPg^{-1}$ if and only if $g\in Comm_G(P)$
\end{enumerate}

Suppose that $\Hdist(P, gPg^{-1})<\infty$.
\cref{lem:Perpendicular} implies that  $P$ is contained in finite neighborhood of $P\cap gPg^{-1}$. Therefore $g\in Comm_G(P)$.  Conversely, suppose $g\in Comm_G(P)$. Then $P$ is contained in finite neighborhood of $P\cap gPg^{-1}$. Analogously, since $g^{-1} \in Comm_G(P)$, using a left translation, it follows that 
$gPg^{-1}$ is contained in finite neighborhood of $P\cap gPg^{-1}$. The two last statements imply that $\Hdist(P, gPg^{-1})$ is finite.
 \end{proof}

\begin{lemma}\label{lem:UltimoPorFavor}
\label{prop:qiChar-BasicProperties3} \label{prop:qiChar-BasicProperties}
Let $\mathcal{P}$ be a finite collection of subgroups of a finitely generated group $G$. The following statements are equivalent:
\begin{enumerate}
    \item For any $P\in \mathcal{P}$, $P$ has finite index in its commensurator. 
    
     \item For any $P\in \mathcal{P}$, the set $\{Q\in G/\mathcal{P} \colon \Hdist(P,Q)<\infty\}$ is finite.
    
    \item For any $P\in \mathcal{P}$, the set $\{Q\in G/\mathcal{P} \colon \Hdist(P,Q)<\infty\}$ is bounded as a subspace of $(G/\mathcal{P}, \Hdist)$.

\end{enumerate}
\end{lemma}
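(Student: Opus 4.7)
The plan is to establish the cycle $(1) \Rightarrow (2) \Rightarrow (3) \Rightarrow (1)$, with \cref{lem:Commensuartor-HDistance} as the key tool throughout. The implication $(2) \Rightarrow (3)$ is immediate from the triangle inequality: writing $S_P = \{Q\in G/\mathcal{P} \colon \Hdist(P,Q)<\infty\}$, any two elements $Q, Q'\in S_P$ satisfy $\Hdist(Q,Q') \leq \Hdist(Q,P) + \Hdist(P,Q') < \infty$, and when $S_P$ is finite its diameter is a maximum over finitely many pairs.

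For $(1) \Rightarrow (2)$, I would partition $S_P$ according to which $P'\in\mathcal{P}$ a given coset belongs to; since $\mathcal{P}$ is finite, it suffices to bound each slice $\{gP' \colon \Hdist(P,gP')<\infty\}$. If $g_0 P'$ and $gP'$ both lie in this slice, the triangle inequality gives $\Hdist(g_0 P', gP')<\infty$; applying the isometry given by left translation by $g_0^{-1}$ (the word metric is left-invariant) yields $\Hdist(P', g_0^{-1}gP')<\infty$, so by \cref{lem:Commensuartor-HDistance} we have $g_0^{-1}g \in Comm_G(P')$. Hence the slice injects into $g_0 \cdot Comm_G(P')/P'$, which is finite by hypothesis (1), and summing over the finitely many $P' \in \mathcal{P}$ bounds $S_P$.

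The substantive direction is $(3) \Rightarrow (1)$. Let $D$ denote the diameter of $S_P$ in the Hausdorff metric. For any $h\in Comm_G(P)$, \cref{lem:Commensuartor-HDistance} places $hP$ in $S_P$, so $\Hdist(P,hP)\leq D$; since $e\in P$, some element of $hP$ must lie within $\dist_G$-distance $D$ of $e$, giving $hp \in \mathcal{N}_D(\{e\})$ for some $p \in P$, and therefore $h \in \mathcal{N}_D(\{e\})\cdot P$. Because $G$ is finitely generated, $\mathcal{N}_D(\{e\})$ is a finite set, so $Comm_G(P)$ is contained in finitely many left cosets of $P$, establishing $[Comm_G(P):P]<\infty$. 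The main obstacle, though modest, is purely bookkeeping: throughout one must consistently use left translations (so that they remain isometries of $(G,\dist_G)$) and take care that the cross-cosets $gP'$ with $P' \neq P$ in the slicing argument for $(1) \Rightarrow (2)$ are handled by reducing to \cref{lem:Commensuartor-HDistance} applied to $P'$ rather than $P$.
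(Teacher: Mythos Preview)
Your proof is correct and follows the same cycle $(1)\Rightarrow(2)\Rightarrow(3)\Rightarrow(1)$ as the paper, using \cref{lem:Commensuartor-HDistance} as the main tool throughout. You supply more detail than the paper does: the paper dispatches $(3)\Rightarrow(1)$ in one line as ``a consequence of \cref{lem:Commensuartor-HDistance}'' where you spell out the finite-ball argument, and your slicing argument for the cross-cosets $gP'$ with $P'\neq P$ in $(1)\Rightarrow(2)$ makes explicit a reduction the paper leaves to the reader.
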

\begin{proof}
To show that the first statement implies the second, suppose every $P\in \mathcal{P}$  has finite index in its commensurator.
By \cref{lem:Commensuartor-HDistance}, for any $P\in \mathcal{P}$ the set $\{gP\in G/P \colon \Hdist(P,gP)<\infty\}$ is finite.
Since the collection $\mathcal{P}$ is finite, for any $P\in \mathcal{P}$, the set $\{Q\in G/\mathcal{P} \colon \Hdist(P,Q)<\infty\}$ is a finite union
 finite sets, hence it is finite.  It is immediate that the second statement implies the third one. That  the third statement implies the first one is a consequence of   \cref{lem:Commensuartor-HDistance}.
\end{proof}

\begin{proof}[Proof of \cref{thm:qi-characteristic-groups2}]
Observe that if $\mathcal{P}$ is a qi-characteristic collection of subgroups of a finitely generated group, then $\mathcal{P}$ is finite. Then the theorem is a direct consequence of \cref{lem:UltimoPorFavor}.
\end{proof}

\subsection{Other remarks on quasi-isometric pairs}\label{section:commensurability} \label{sub:Qi-char03}

In this part we record a couple of propositions on quasi-isometries of pairs over  commensurable groups, and an additional proposition at the end, for future reference.   The results of this subsection  are not used in the rest of the article.

\begin{proposition}\label{prop:FiniteIndexSubgroup}
Suppose that $H$ is a finite index subgroup of a finitely generated group $G$. Let $\mathcal{P}$ be a finite  collection of subgroups of $G$. Then there is a finite collection of subgroups $\mathcal{Q}$ of $H$ such that the inclusion $H\hookrightarrow G$ induces a quasi-isometry of pairs  $q\colon (H, \mathcal{Q})$ to $(G, \mathcal{P})$.

Here $\mathcal{Q}=\{Q_i \colon i\in I \}$ where $Q_i = g_iP_ig_i^{-1}\cap H$ and 
$\{g_iP_i \colon i\in I\}$  is a collection of  representatives of the orbits of the $H$-action on $G/\mathcal{P}$ by multiplication on the left.
\end{proposition}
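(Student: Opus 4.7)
The plan is to verify directly that the inclusion $\iota\colon H \hookrightarrow G$, viewed as a map of the indicated pairs, satisfies both conditions of \cref{defn:quasi-isometry-pairs}. That $\iota$ is itself a quasi-isometry of metric spaces is immediate from $[G:H]<\infty$: a word metric on $H$ is bi-Lipschitz equivalent to the subspace metric it inherits from any word metric on $G$.

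First I would dispense with the finiteness of the index set $I$. The $H$-orbits in $G/\mathcal{P}$ biject with $\bigsqcup_{P \in \mathcal{P}} H\backslash G / P$, and each double coset set is finite because $H$ has finite index in $G$ and $\mathcal{P}$ is finite. A one-line computation identifies $Q_i$ as $\mathrm{Stab}_H(g_iP_i)$: indeed $h \cdot g_iP_i = g_iP_i$ iff $h \in g_iP_ig_i^{-1}$. Consequently the orbit map $hQ_i \mapsto h g_iP_i$ is a bijection between $H/Q_i$ and the $H$-orbit of $g_iP_i$, and as $i$ ranges over $I$ these orbits partition $G/\mathcal{P}$.

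The heart of the proof is a uniform bound $\Hdist_G(hQ_i, h g_iP_i)\le M$ for all $h\in H$ and $i\in I$. By left $G$-invariance of $\dist_G$ this reduces to bounding $\Hdist_G(Q_i, g_iP_i)$ for each $i$ and taking the maximum over the finite set $I$. I would split this into two transparent sub-estimates: (a) since $H$ has finite index in $G$, the subgroup $Q_i = H \cap g_iP_ig_i^{-1}$ has index at most $[G:H]$ in $g_iP_ig_i^{-1}$, so the latter is a bounded neighborhood of the former, with width controlled by the word lengths of a transversal; (b) right multiplication by $g_i$ shifts each point by exactly $|g_i|_G$ in the left-invariant metric, yielding $\Hdist_G(g_iP_ig_i^{-1}, g_iP_i)\le |g_i|_G$. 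Combining (a) and (b) gives the required finite bound.

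With this estimate in hand, both conditions of \cref{defn:quasi-isometry-pairs} follow: given $hQ_i \in H/\mathcal{Q}$ the coset $hg_iP_i \in G/\mathcal{P}$ witnesses the first condition, while given $gP\in G/\mathcal{P}$ one writes $gP = hg_iP_i$ for suitable $i\in I$ and $h\in H$ and then $hQ_i$ witnesses the second. I anticipate no serious obstacles; the only point requiring care is that the constant $M$ must be chosen independently of $h$ and $i$, which is guaranteed by $H$-equivariance of the Hausdorff distance together with the finiteness of $I$.
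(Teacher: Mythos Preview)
Your proposal is correct and follows essentially the same approach as the paper's proof: both establish the uniform bound $\Hdist_G(hQ_i, hg_iP_i)\le M$ by combining the finite-index inclusion $Q_i\le g_iP_ig_i^{-1}$ with the right-translation estimate $\Hdist_G(g_iP_ig_i^{-1}, g_iP_i)\le |g_i|_G$, then take the maximum over the finite set $I$ and use left $H$-invariance. Your additional remarks identifying $Q_i$ with $\mathrm{Stab}_H(g_iP_i)$ and invoking the orbit--stabilizer bijection are a nice expository touch not made explicit in the paper, but the underlying argument is the same.
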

\begin{proof}
The finite  collection $\mathcal{P}$ is a collection of orbit representatives of the $G$-action on $G/\mathcal{P}$ by left multiplication. Since $H$ is finite index  in $G$, the induced $H$-action on $G/\mathcal{P}$ has finitely many orbits. Let $g_1P_1, \dots , g_kP_k$ be representatives of the $H$-orbits of $G/\mathcal{P}$, let $Q_i = g_iP_ig_i^{-1}\cap H$, and define  
\[\mathcal{Q}=\{Q_i \colon 1\leq i \leq k \}.\]

Since $H$ is finite index in $G$,
for any $g\in G$ and $P\in \mathcal{P}$,  the index of $gPg^{-1}\cap H$ as a subgroup of $gPg^{-1}$ is finite, in particular, $\Hdist_G(g^{-1}Pg\cap H, g^{-1}Pg)$ is finite. Let 
\begin{equation}\nonumber
D_1=\max\left\{ \Hdist_G(g_iP_ig_i^{-1}\cap H, g_iP_ig^{-1}_i) \colon 1\leq i\leq k \right \}
\end{equation}
and
\begin{equation}\nonumber
 D_2=\max \{ \dist_G(e, g_i) \colon 1\leq i\leq k\}.
\end{equation}
 Observe that for any $h\in H$ and $1\leq i\leq k$, 
\begin{equation}\label{ineq:010}  
\begin{split}
\Hdist_G\left ( h Q_i, hg_iP_i\right) 
 & = \Hdist_G\left ( g_iP_ig^{-1}_i\cap H, g_iP_i\right) \\
 & \leq  
\Hdist_G\left ( g_iP_ig^{-1}_i\cap H, g_iP_ig^{-1}_i\right) + \Hdist_G\left ( g_iP_ig^{-1}_i, g_iP_i \right) \\
 & \leq D_1 + D_2.
\end{split}
\end{equation}

To conclude we argue that the inclusion $\imath\colon H\hookrightarrow G$ is indeed a quasi-isometry of pairs $(H, H/\mathcal{Q}) \to (G, G/\mathcal{P})$.  Let $D> D_1+D_2$. 
Suppose that $P\in G/\mathcal{P}$. Since $P=hg_iP_i$ for some $h\in H$ and $g_iP_i$, let $R=hQ_i$ and observe that   inequality~\eqref{ineq:010} implies that $\Hdist_G(R,P)< D$ and   $R\in H/\mathcal{Q}$.  Analogously, suppose that $Q \in H/\mathcal{Q}$. Since $Q=hQ_i$ for some $h\in H$ and $Q_i$, let $S=hg_iP_i$
and note that $\Hdist_G (Q, S)< D$ by inequality~\eqref{ineq:010}, and $S\in G/\calP$. 
\end{proof}

\begin{example}
 Let $G$ be a finitely generated infinite group with finitely many conjugacy classes of finite subgroups, and let $\mathcal{P}$ be a collection of representatives of conjugacy classes of finite subgroups. Suppose that $H$ is a torsion-free finite index subgroup of $G$. If $\mathcal{Q}$ is the set containing only the trivial subgroup of $H$, then inclusion $H\hookrightarrow G$ is  a quasi-isometry of pairs $(H, \mathcal{Q}) \hookrightarrow (G, \mathcal{P})$. 
\end{example}

\begin{proposition}\label{prop:FiniteIndexSupergroup}
Suppose that $H$ is a finite index subgroup of a finitely generated group $G$. Let $\mathcal{Q}$ be a finite collection of subgroups of $H$. Suppose that for each $Q_1\in \mathcal{Q}$ and   $g\in G$, there is  $Q_2\in \mathcal{Q}$ and $h\in H$ such that $gQ_1g^{-1} = hQ_2h^{-1}$. 
Then the inclusion $H\hookrightarrow G$ is a quasi-isometry of pairs $(H,\mathcal{Q}) \hookrightarrow (G, \mathcal{Q})$.
\end{proposition}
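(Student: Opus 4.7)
The plan is to verify the two conditions of \cref{defn:quasi-isometry-pairs} directly for the inclusion $\iota\colon H\hookrightarrow G$, which is automatically a quasi-isometry since $[G:H]<\infty$. Condition~(1) is immediate: every left coset $hQ\in H/\mathcal{Q}$ already lies in $G/\mathcal{Q}$, so the matching set can be taken to be $hQ$ itself, at Hausdorff distance zero. The real content is in condition~(2), namely approximating every $gQ\in G/\mathcal{Q}$ by some coset in $H/\mathcal{Q}$ with a uniform Hausdorff bound.

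The main idea is to extract a finite list of ``correction elements'' of $G$ depending only on $\mathcal{Q}$ and on a fixed choice of right coset representatives for $H$ in $G$. Fix $t_1,\dots,t_n\in G$ so that $G=\bigsqcup_{i} H t_i$. For each pair $(t_i,Q_1)\in \{t_1,\dots,t_n\}\times\mathcal{Q}$ the hypothesis furnishes some $h_{i,Q_1}\in H$ and $Q_2\in\mathcal{Q}$ (depending on $i$ and $Q_1$) with $t_iQ_1t_i^{-1}=h_{i,Q_1}Q_2 h_{i,Q_1}^{-1}$. Setting $\alpha_{i,Q_1}:=h_{i,Q_1}^{-1}t_i$ yields a finite subset of $G$ whose elements satisfy $\alpha_{i,Q_1}Q_1 = Q_2\alpha_{i,Q_1}$.

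Given an arbitrary $gQ_1\in G/\mathcal{Q}$, I would decompose $g=h_0 t_i$ with $h_0\in H$ and compute
\[
gQ_1 \;=\; h_0\, t_i\, Q_1 \;=\; h_0\, h_{i,Q_1}\, \alpha_{i,Q_1}\, Q_1 \;=\; h_0\, h_{i,Q_1}\, Q_2\, \alpha_{i,Q_1}.
\]
Writing $h:=h_0 h_{i,Q_1}\in H$, this exhibits $gQ_1$ as the right translate by $\alpha_{i,Q_1}$ of the left coset $hQ_2\in H/\mathcal{Q}$. By left-invariance of $\dist_G$ one has $\Hdist_G(gQ_1,hQ_2)=\Hdist_G(Q_2\alpha_{i,Q_1},Q_2)$, and a short direct calculation using left-invariance shows this equals $\dist_G(\alpha_{i,Q_1},Q_2)\leq \dist_G(e,\alpha_{i,Q_1})$. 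Taking $M$ to be the maximum of $\dist_G(e,\alpha_{i,Q_1})$ over the finite index set of pairs $(i,Q_1)$ completes the verification.

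The step I expect to require the most care is isolating the finite collection $\{\alpha_{i,Q_1}\}$: the hypothesis provides a conjugator $h\in H$ for each pair $(g,Q_1)$ but with no a priori control on its word length, so one must factor $g$ through the finite list of right coset representatives and apply the hypothesis only to those representatives, ensuring the correction data depends only on $(i,Q_1)$ and not on $g$. Everything else is routine bookkeeping with left and right cosets.
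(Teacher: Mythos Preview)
Your argument is correct and follows essentially the same strategy as the paper: reduce to finitely many cases using the finite index of $H$ in $G$ together with the finiteness of $\mathcal{Q}$, apply the conjugation hypothesis in those cases, and propagate the resulting Hausdorff bound by left-translation invariance. The only cosmetic difference is that the paper phrases the finiteness reduction in terms of $H$-orbits on $G/\mathcal{Q}$ and abstract orbit representatives, while you unwind this explicitly via a fixed set of right coset representatives $t_i$ and the correction elements $\alpha_{i,Q_1}$; your Hausdorff-distance computation $\Hdist_G(Q_2\alpha,Q_2)=\dist_G(\alpha,Q_2)$ is also a bit sharper than the paper's triangle-inequality estimate.
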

\begin{proof}
Consider the left $H$-action on $G/\mathcal{Q}$ by multiplication on the left. Note that this $H$-action preserves the Hausdorff distance $\Hdist_G$ between subsets of $G$. Since $\Hdist$ is a metric on $G/\mathcal{Q}$, the $H$-action on $G/\mathcal{Q}$ is by isometries. By definition, $H/\mathcal{Q}$ is a subset of $G/\mathcal{Q}$. 

To prove that the inclusion $H\hookrightarrow G$ is a quasi-isometry of pairs $(H, \mathcal{Q}) \to (G, \mathcal{Q})$, it is enough to verify that
\[ \max\left\{ \Hdist_G(gQ, H/\mathcal{Q}) \colon gQ\in G/\mathcal{Q} \right\} < \infty,\]
where
\[\Hdist_G(gQ, H/\mathcal{Q}):= \min\left\{\Hdist_G(gQ, hQ') \colon hQ'\in H/\mathcal{Q} \right\}.\]

Since $H$ is finite index subgroup of $G$, and $\mathcal{Q}$ is a finite collection, it follows that the $H$-action on $G/\mathcal{Q}$ has finitely many orbits. Let $\mathcal{R}$ be a collection of orbit representatives of the $H$-action on $G/\mathcal{Q}$.

Let $gQ \in \mathcal{R}$.
By hypothesis, there is $h\in H$ and $Q'\in \mathcal{Q}$ such that $gQg^{-1}=hQ'h^{-1}$. Therefore 
\[
\begin{split}
\Hdist_G(gQ,hQ')& \leq \Hdist_G(gQ, gQg^{-1}) + \Hdist_G(hQ'h^{-1}, hQ')\\ & \leq \dist_G(e,g)+\dist_G(e,h)<\infty
\end{split},\]
and hence 
\[
\Hdist_G(gQ,H/\mathcal{Q})<\infty.    
\] 
Since $\mathcal{R}$ is a finite set, 
\[ D = \max\{\Hdist_G(gQ,H/\mathcal{Q}) \colon gQ \in \mathcal{R}\} <\infty \]
is a well defined integer.

Since $H$ is a
subgroup of $G$, the subset $H/\mathcal{Q}$ of $G/\mathcal{Q}$ is $H$-invariant. Therefore
\[\Hdist_G(gQ, H/\mathcal{Q}) = \Hdist_G(hgQ, H/\mathcal{Q}) \]
for every $gQ\in \mathcal{R}$ and $h\in H$. 

Since $\mathcal{R}$ is a collection of representatives of orbits of $G/\mathcal{Q}$, it follows 
\[\Hdist_G(gQ, H/\mathcal{Q}) \leq D\]
for every $gQ \in G/
\mathcal{Q}$.
\end{proof}

\begin{proposition}\label{prop:finite:generation} 
Let $\calP$ and $\calQ$ be finite collections of  subgroups of the finitely generated groups $G$ and $H$ respectively. Suppose that $q\colon (G,\calP) \to (H, \calQ)$ is a quasi-isometry of pairs. Let $P \in \calP$, $Q\in \calQ$ and suppose $ \Hdist( q(P), hQ) <\infty$ for some $h\in H$.  If $P$ is finitely generated then $Q$ is finitely generated and quasi-isometric to $P$.
\end{proposition}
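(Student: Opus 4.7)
The plan is to reduce the statement to the standard metric characterization of finite generation: a subgroup $P$ of a finitely generated group $G$, equipped with the induced word metric $\dist_G$, is finitely generated if and only if there exists $r>0$ such that any two elements of $P$ can be joined by a chain $p = p_0, p_1, \ldots, p_n = p'$ in $P$ with $\dist_G(p_i, p_{i+1}) \leq r$ for every $i$. The nontrivial direction is immediate once one observes that $T = P \cap \{g\in G : \dist_G(e,g)\leq r\}$ is finite (since $G$ is finitely generated) and generates $P$, because for such a chain starting at the identity the consecutive products $p_{i+1}p_i^{-1}$ lie in $T$.

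First I would normalize the pair quasi-isometry by precomposing its target with the left translation $L_{h^{-1}}\colon H \to H$. This is an isometry of $H$, and since the collection $H/\calQ$ is $H$-invariant under left multiplication, $L_{h^{-1}} \circ q$ is still a quasi-isometry of pairs $(G, \calP) \to (H, \calQ)$ with the same linear constants $(L,C)$ and some new Hausdorff constant. After this reduction we may assume $\Hdist_H(q(P), Q) \leq N$ for some finite $N$.

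Next, given arbitrary $q_1, q_2 \in Q$, I would select $p_1, p_2 \in P$ with $\dist_H(q(p_i), q_i) \leq N$. Because $P$ is finitely generated, there is an $r>0$ and a chain $p_1 = x_0, x_1, \ldots, x_n = p_2$ in $P$ with $\dist_G(x_i, x_{i+1}) \leq r$. Pushing this chain forward by the $(L,C)$-quasi-isometry $q$ produces a sequence in $H$ with $\dist_H(q(x_i), q(x_{i+1})) \leq Lr + C$. For each intermediate index $1 \leq i \leq n-1$ I would pick $y_i \in Q$ with $\dist_H(y_i, q(x_i)) \leq N$, and set $y_0 = q_1$ and $y_n = q_2$. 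A triangle inequality then yields
\[
\dist_H(y_i, y_{i+1}) \leq N + (Lr + C) + N = Lr + C + 2N
\]
for every $i$. Thus $q_1$ and $q_2$ are joined in $Q$ by a chain with steps at most $r' := Lr + C + 2N$, a constant independent of the chosen elements, so the metric characterization above shows that $Q$ is finitely generated.

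The argument is essentially a direct unpacking of the definitions. The only conceptual ingredient is the metric criterion for finite generation of a subgroup, which is standard and does not involve the pair structure; no serious obstacle is anticipated.
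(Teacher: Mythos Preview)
Your proposal is correct and follows essentially the same route as the paper: reduce by a left translation to $\Hdist_H(q(P),Q)<\infty$, use finite generation of $P$ to get $\sigma$-coarse connectivity of $(P,\dist_G)$, push this through the quasi-isometry to obtain $\sigma'$-coarse connectivity of $(Q,\dist_H)$, and conclude finite generation of $Q$. The only difference is in this last step: the paper builds the graph on $Q$ with edges between points at distance $\leq\sigma'$ and invokes the \v{S}varc--Milnor lemma for the free cocompact $Q$-action on it, whereas you give the equivalent direct argument that $Q$ is generated by the finite set $Q\cap\{h:\dist_H(e,h)\leq r'\}$. (A tiny convention slip: with the left-invariant word metric the elements landing in the ball of radius $r$ about $e$ are the $p_i^{-1}p_{i+1}$ rather than $p_{i+1}p_i^{-1}$; this does not affect the argument.)
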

\begin{proof}
 By post-composing $q$ with the left multiplication by $h^{-1}$, we can assume that $\Hdist( q(P), Q) <\infty$.  Hence there is a quasi-isometry $g\colon (P, \dist_G) \to (Q, \dist_H)$. 
Let $S$ be a finite generating set of $P$, and let $\sigma=\max\{\dist_G(e, s)\colon s\in S\}$. Then it follows that $P$ is a $\sigma$-coarsely connected space with respect to $\dist_G$, that means for every pair of points $a,b$ in $P$ there is a sequence $a=x_0,x_1,\ldots ,x_\ell=b$ such that $\dist_G(x_i,x_{i+1})\leq \sigma$. Then the quasi-isometry $g$ implies that there is $\sigma'>0$ such that $Q$ is $\sigma'$-coarsely connected with respect to $\dist_H$. Let $\Delta$ be the graph with vertex set $Q$ and an edge between any pair of points at distance at most $\sigma'$ with respect to $\dist_H$. Then $\Delta$ is a connected graph, and since $(H,\dist_H)$ is  locally finite,  $\Delta$ is also a locally finite graph. Note that $Q$ acts freely and cocompactly on $\Delta$ and therefore, by the Schwartz-Milnor lemma, $Q$ is finitely generated.

Note that a finitely generated subgroup of a finitely generated group is coarsely embedded. Let $S$ and $T$ be finite generating sets of $P$ and $Q$ respectively. Then, we have that $\mathsf{Id}_P\colon (P,\dist_S)\to (P,\dist_G)$ is a coarse equivalence, and analogously for $\mathsf{Id}_Q$. Therefore $\mathsf{Id}_P\circ g\circ \mathsf{Id}_Q$ is a coarse equivalence between $P$ and $Q$. It is well known observation by Gromov that every coarse-equivalence between finitely generated groups is a quasi-isometry.
\end{proof}

A finitely generated subgroup $P$ of a finitely generated group $G$ is \emph{undistorted} if, by considering the corresponding words metrics induced by finite generating sets, the inclusion $P\hookrightarrow G$ is a quasi-isometric map.

 \begin{question}
 Let $P$ be a finitely generated qi-subcharacteristic subgroup of a finitely generated group $G$. Is $P$ an undistorted subgroup of $G$? 
 \end{question}

\section{QI-characteristic collections from Relative hyperbolicity, and other examples.} \label{sub:Qi-char02}

This part discuss examples of qi-characteristic collections arising from relative hyperbolicity, and cite other examples from the literature.  Following the convention in~\cite{BDM09}, if a group contains no collection of proper subgroups with respect to which is relatively hyperbolic, then we say that the group is \emph{not relatively hyperbolic (NRH)}. The following theorem is a   consequence of a corollary of work by 
Behrstock,   Dru\c{t}u,  and Mosher~\cite[Theorem~4.1]{BDM09} and  \cref{thm:qi-characteristic-groups2}.  

\begin{theorem}\label{cor:BDM-RelHyp}
Let $G$ be a finitely generated group hyperbolic relative to a finite collection $\mathcal{P}$ of NRH finitely generated subgroups. Then $\mathcal{P}$ is a qi-characteristic collection of $G$.
\end{theorem}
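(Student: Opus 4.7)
The plan is to verify the three conditions characterizing qi-characteristic collections in \cref{thm:qi-characteristic-groups2}. Condition (1), that $\mathcal{P}$ is finite, holds by the standing hypothesis of the theorem.

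For condition (2), I would invoke the almost malnormality of the peripheral family in a relatively hyperbolic group: for any $P\in\mathcal{P}$ and any $g\in G\setminus P$, the intersection $P\cap gPg^{-1}$ is finite. Since each $P\in\mathcal{P}$ is NRH and in particular infinite, any $g\in Comm_G(P)$ makes $P\cap gPg^{-1}$ a finite-index subgroup of $P$, hence infinite; almost malnormality then forces $g\in P$. It follows that $Comm_G(P)=P$, so the finite-index requirement on the commensurator is trivially satisfied.

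Condition (3) asks for uniform quasi-isometric invariance of the family $G/\mathcal{P}$, which is precisely the content of~\cite[Theorem~4.1]{BDM09}: for every $L\geq 1$ and $C\geq 0$ there is $M=M(G,\mathcal{P},L,C)$ such that any $(L,C)$-quasi-isometry $q\colon G\to G$ sends each coset in $G/\mathcal{P}$ within Hausdorff distance $M$ of another coset in $G/\mathcal{P}$, and (applying the same statement to a quasi-inverse of $q$) every coset is within Hausdorff distance $M$ of the $q$-image of some coset. Translated into the language of~\cref{defn:quasi-isometry-pairs}, this exactly says that $q\colon(G,\mathcal{P})\to(G,\mathcal{P})$ is an $(L,C,M)$-quasi-isometry of pairs. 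Combining the three conditions with~\cref{thm:qi-characteristic-groups2} then yields that $\mathcal{P}$ is qi-characteristic.

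The main obstacle is not really in assembling the proof but in correctly matching BDM's formulation, which is phrased in terms of the asymptotic cone and the preservation of peripheral ultralimits, with the Hausdorff-distance language of~\cref{defn:quasi-isometry-pairs}; the uniformity of $M$ in $(L,C)$ is implicit in their argument and needs to be extracted. Once that translation is in place, the commensurator verification is a routine consequence of almost malnormality, and~\cref{thm:qi-characteristic-groups2} closes the argument with no further work.
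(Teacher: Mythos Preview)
Your overall strategy matches the paper's: verify the three conditions of \cref{thm:qi-characteristic-groups2}, using almost malnormality of $\mathcal{P}$ for condition~(2) and the Behrstock--Dru\c{t}u--Mosher results for condition~(3). The treatment of conditions~(1) and~(2) is correct and essentially identical to the paper's.

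There is, however, a genuine gap in your handling of condition~(3). You assert that \cite[Theorem~4.1]{BDM09} directly yields, for each $A\in G/\mathcal{P}$, some $B\in G/\mathcal{P}$ with $\Hdist_G(q(A),B)<M$. What one actually extracts from BDM (the paper uses the proof of \cite[Theorem~4.8]{BDM09}) is only the one-sided containment $q(A)\subseteq\mathcal{N}_M(B)$ for quasi-isometric embeddings of cosets. Upgrading this to a two-sided Hausdorff bound is a nontrivial step that the paper carries out explicitly: applying the same containment to a quasi-inverse $\bar q$ gives $\bar q(B)\subseteq\mathcal{N}_M(A')$ for some $A'\in G/\mathcal{P}$, whence $A\subseteq\mathcal{N}_{LM+M+2C}(A')$; one must then argue that $A=A'$. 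The paper does this via a \emph{second} use of almost malnormality (together with \cref{lem:Perpendicular}): if one coset lies in a finite neighborhood of another and the subgroups are infinite, the cosets coincide. Only after $A=A'$ is established does one obtain $B\subseteq\mathcal{N}_{LM+2C}(q(A))$ and hence the uniform Hausdorff bound. You flag the BDM translation as ``the main obstacle'' but do not resolve it; in particular, malnormality enters the paper's argument twice, not once, and the second appearance is precisely what closes the gap you leave open.
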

 
\begin{proof}
Relative hyperbolicity implies $\mathcal{P}$ is an almost malnormal collection \cite{Osin06}, in particular, $P=Comm_G(P)$ for any $P\in \mathcal{P}$. The collection $G/\mathcal{P}$ is coarsely discrete, that is, if $g_1P_1,g_2P_2\in G/\mathcal{P}$ are at finite Hausdorff distance then $g_1P_1=g_2P_2$. 

Observe that, if $g_1P_1, g_2P_2 \in G/\calP$, and $g_1P_1\subseteq \mathcal{N}_k(g_2P_2)$ for some $k$,
then $g_1P_1=g_2P_2$. Indeed, under the hypothesis, \cref{lem:Perpendicular} implies that $g_1P_1g_1^{-1}$ is a subset of a finite neighborhood of $g_1P_1g_1^{-1}\cap g_2P_2g_2^{-1}$; since $g_1P_1g_1^{-1}$ is an NRH, in particular it is infinite, and then malnormality implies that $g_1P_1g_1^{-1}=g_2P_2g_2^{-1}$; therefore $P_1=P_2$ and $g_2^{-1}g_1\in P_1$. 

The proof of~\cite[Theorem 4.8]{BDM09} shows that for every $L\geq 1$ and $C\geq 0$, there is $M=M(L,C,G, \mathcal{P})>0$ such that for any $A\in G/\mathcal{P}$ and any $(L,C)$-quasi-isometric embedding $q\colon A \to G$ there is $B\in G/\mathcal{P}$ such that $q(A) \subseteq \mathcal{N}_M(B)$. Hence, for any $(L,C)$-quasi-isometry $q\colon G \to G$ and any $A\in G/\mathcal{P}$ there is $B\in G/\mathcal{P}$ such that $q(A) \subset \mathcal{N}_M(B)$.

To conclude the proof we show that any $(L,C)$-quasi-isometry $q\colon G\to G$ is an $(L,C,LM+2C)$-quasi-isometry of pairs $q\colon (G,\calP) \to (G,\calP)$.  Let $A\in G/\calP$. By the statement of the previous paragraph, there is $B\in G/\calP$ such that \begin{equation}\label{eq:ultima:plz}
    q(A)\subseteq \mathcal{N}_M(B).
\end{equation} 
Let $\bar q\colon G\to G$ be a quasi-inverse of $q$ as defined in \cref{def:quasiIsometry}. It follows that $\bar q(B)\subseteq \mathcal{N}_M(A')$ where $A'\in G/\calP$, by the previous paragraph.  On the other hand, \eqref{eq:ultima:plz} implies that $A\subseteq \mathcal{N}_{LM+2C}(\bar q(B))$, and therefore $A\subseteq \mathcal{N}_{LM+M+2C}(A')$. Now, the statement of the second paragraph implies that $A=A'$. It follows that $\bar q(B)\subseteq \mathcal{N}_{M}(A)$, and by applying $q$ both sides we obtain that 
\[B\subseteq \mathcal{N}_{LM+2C}(q(A))\]
This last equation and \eqref{eq:ultima:plz} imply that $\Hdist(q(A),B)\leq LM+2C$.
\end{proof}

\begin{proposition}\label{prop:RemovingQIsubgroup}
Let $G$ be a finitely generated group and let $\mathcal{P}\cup\{H\}$ be a qi-characteristic collection of finitely generated undistorted subgroups. If for any $P\in \mathcal{P}$ there is no quasi-isometric embedding $P\to H$, then $\mathcal{P}$ is a qi-characteristic collection of $G$.
\end{proposition}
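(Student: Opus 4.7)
The plan is to verify the three conditions of \cref{thm:qi-characteristic-groups2} for $\calP$. Conditions (1) and (2) are immediate from the hypothesis that $\calP \cup \{H\}$ is qi-characteristic: finiteness of $\calP$ is clear, and the finite-index-in-commensurator condition on each $P\in\calP$ is a property internal to $G$ that is unaffected by deleting $H$ from the collection. The work lies entirely in condition (3): producing a uniform constant $M = M(L,C)$ making every $(L,C)$-quasi-isometry $q\colon G \to G$ into an $(L,C,M)$-quasi-isometry of pairs $(G, \calP) \to (G, \calP)$.

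Fix such a $q$ and let $M_0 = M_0(L,C)$ be the constant supplied by the qi-characteristic property of $\calP\cup\{H\}$. I would reduce condition (3) to the following two claims: (a) for every $gP\in G/\calP$, any element $B \in G/(\calP\cup\{H\})$ with $\Hdist_G(q(gP),B) < M_0$ must in fact lie in $G/\calP$; and symmetrically (b) for every $g'P'\in G/\calP$, any element $A\in G/(\calP\cup\{H\})$ with $\Hdist_G(q(A), g'P') < M_0$ must lie in $G/\calP$. The existence of such $B$ and $A$ is immediate from the qi-characteristic hypothesis on $\calP \cup \{H\}$; granted the two claims, condition (3) for $\calP$ holds with $M = M_0$.

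For (a), suppose toward a contradiction that $B = g'H$. Composing $q|_{gP}$ with a nearest-point projection $\mathcal{N}_{M_0}(g'H)\to g'H$ produces a quasi-isometric embedding $gP\to g'H$ with constants depending only on $L$, $C$, and $M_0$. Since $P$ and $H$ are undistorted in $G$, left translation identifies $(gP,\dist_G|_{gP})$ and $(g'H,\dist_G|_{g'H})$ with $P$ and $H$ equipped with their own word metrics, up to a uniform quasi-isometry; composition yields a quasi-isometric embedding $P\to H$, contradicting the hypothesis. Case (b) proceeds similarly, with one crucial twist: if $A=g''H$, the projection argument again yields a quasi-isometric embedding $g''H\to g'P'$, and now the two-sided Hausdorff bound additionally forces the image to be cobounded in $g'P'$, so the map is in fact a full quasi-isometry. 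Undistortedness then yields a quasi-isometry $H\to P'$, and in particular a quasi-isometric embedding $P'\to H$, again contradicting the hypothesis.

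The conceptual point worth highlighting, and the only place where the argument is not entirely symmetric, is that case (a) only requires a qi-embedding in the forbidden direction, whereas case (b) must extract a full quasi-isometry so that its inverse supplies the forbidden qi-embedding $P'\to H$; both are delivered by the two-sidedness of the Hausdorff distance bound. The remaining manipulations with quasi-inverses, projections, and undistortion constants are routine and I would not expect them to present any real difficulty.
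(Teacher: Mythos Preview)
Your argument is correct and follows the same line as the paper's: use the qi-characteristic property of $\calP\cup\{H\}$ to obtain the constant $M$, then rule out cosets of $H$ via the no-qi-embedding hypothesis together with undistortion. The paper's proof is terser and only makes the forward direction (your case (a)) explicit; your treatment of case (b), and in particular the observation that one must extract a full quasi-isometry there in order to invert it and obtain the forbidden embedding $P'\to H$, is a genuine clarification of a step the paper leaves to the reader.
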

\begin{proof}
Let $L\geq 1$ and $C\geq 0$. Then there is $M=M(L,C)$ such that any $(L,C)$-quasi-isometry $q\colon G\to G$ is a $(L,C,M)$-quasi-isometry of pairs $q\colon (G, \mathcal{P}\cup\{H\}) \to (G, \mathcal{P}\cup\{H\})$. The assumption on $H$ implies $q(P)$ can not be at finite Hausdorff distance from a left coset of $H$. Therefore  $q\colon (G, \mathcal{P}) \to (G, \mathcal{P})$ is an $(L,C,M)$-quasi-isometry of pairs. 
\end{proof}

\begin{corollary}\label{cor:RemovingQIsubgroup}
Let $G$ and $\mathcal{P}=\{P_0,\ldots, P_n\}$ be as in \cref{cor:BDM-RelHyp}. Suppose that there is no quasi-isometric embedding $P_i \to P_0$ for $1\leq i\leq n$. Then $\{P_1, \dots , P_n\}$ is a qi-characteristic collection. 
\end{corollary}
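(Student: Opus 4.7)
The plan is simply to apply \cref{prop:RemovingQIsubgroup} with $H=P_0$ and $\mathcal{P}=\{P_1,\ldots,P_n\}$, so that $\mathcal{P}\cup\{H\}=\{P_0,P_1,\ldots,P_n\}$. This reduces the proof to verifying the hypotheses of that proposition.

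First, I would check that the whole collection $\{P_0,\ldots,P_n\}$ is qi-characteristic in $G$. This is immediate from \cref{cor:BDM-RelHyp}, since by assumption $G$ is hyperbolic relative to this collection of NRH finitely generated subgroups. Second, I would verify that each $P_i$ is finitely generated (given as part of the setup of \cref{cor:BDM-RelHyp}) and undistorted in $G$; the undistortedness of peripheral subgroups is a standard fact in the theory of relatively hyperbolic groups, for instance it follows from Osin's development of relative hyperbolicity via relative Dehn functions, or from the fact that peripheral subgroups embed as quasi-convex subsets in the coned-off Cayley graph and embed isometrically on the complement of the cone points. Third, the assumption of no quasi-isometric embedding $P_i\to P_0$ for $1\leq i\leq n$ is precisely what is needed in \cref{prop:RemovingQIsubgroup}.

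Given these three verifications, \cref{prop:RemovingQIsubgroup} yields that $\{P_1,\ldots,P_n\}$ is a qi-characteristic collection of $G$, which is the desired conclusion.

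The only non-tautological step is the undistortedness of the peripheral subgroups; the rest is bookkeeping. Since this is a well-documented property of relatively hyperbolic groups, I do not anticipate a genuine obstacle, but it is the one place where a citation to the literature on relative hyperbolicity (e.g.\ \cite{Osin06}) is required rather than merely an application of results stated earlier in the paper.
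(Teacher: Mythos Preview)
Your proposal is correct and follows essentially the same route as the paper: invoke \cref{cor:BDM-RelHyp} to get that $\{P_0,\ldots,P_n\}$ is qi-characteristic, note that peripheral subgroups are finitely generated and undistorted, and then apply \cref{prop:RemovingQIsubgroup}. The only cosmetic difference is that the paper cites \cite{DS05} (Dru\c{t}u--Sapir) rather than \cite{Osin06} for the undistortedness of peripheral subgroups.
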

\begin{proof}
Since each $P\in \mathcal{P}$ is a  finitely generated NRH group, \cref{cor:BDM-RelHyp} implies that $\mathcal{P}$ is a qi-characteristic collection of $G$. The subgroups $\mathcal{P}$ are undistorted in $G$, see~\cite{DS05}. By \cref{prop:RemovingQIsubgroup}, it follows that $\{P_1,\ldots , P_n\}$ is a qi-characteristic collection of subgroup of $G$.
\end{proof}

\begin{example}\label{prop:example01}
Recall that a subgroup is qi-subcharacteristic if it belongs to a qi-characteristic collection. 
\begin{enumerate}
\item A finite subgroup of an   finitely generated infinite group is not qi-subcharacteristic. 

\item The NRH hypothesis of \cref{cor:BDM-RelHyp} is necessary, for instance, if $F$ is a free group of finite rank then a maximal cyclic subgroup is not qi-subcharacteristic. There is a quasi-isometry of $F$ that maps an  infinite geodesic preserved by a non-trivial element of $F$ to a geodesic that is  preserved by no element of $F$.

\item Let $A$ and $B$ be finitely generated NRH groups endowed with word metrics with a common finite subgroup $C$. By \cref{cor:RemovingQIsubgroup}, if there is  no quasi-isometric embedding $A\to B$, then $\{A\}$ is a qi-characteristic collection of subgroups of $A\ast_C B$.

\item In contrast to the previous example, let $G=\mathbb Z^2\ast \mathbb Z^2$ and let $H$ be the left hand side factor.  While $H$ is a qi-subcharacteristic subgroup by \cref{cor:BDM-RelHyp}, the collection $\{H\}$ is not qi-characteristic. The second and third conditions of the \cref{thm:qi-characteristic-groups2} hold, but the first does not. Specifically, a  quasi-isometry that flips the two factors sends $H$ to a space that is at infinite Hausdorff distance of any of its left cosets.

\item \label{example:BS}
Let $n\geq 2$ and consider the Baumslag-Solitar group $BS(1,n)=\langle a,t| tat^{-1}=a^n\rangle$.
The distorted cyclic subgroup $\langle a \rangle$ is not qi-characteristic since it has infinite index in its commensurator. 

The subgroup $\langle t\rangle$ does not form a qi-characteristic collection. We sketch the argument  using a construction that appears in the work of Farb and Mosher~\cite{FaMo98} on quasi-isometric rigidity of solvable Baumslag-Solitar groups. They use a particular metric on the Cayley complex $X_n$ of $BS(1,n)$ together with the projection $\pi\colon X_n\to T_n$ to the Bass-Serre tree. Let us recall a few properties: the inverse image $\pi^{-1}(L)$ 
of any \emph{coherently oriented  proper line $L$ of $T_n$} is an isometrically embedded hyperbolic plane $H$; all hyperbolic planes of $X_n$ arise in this way and can be simultaneously identified with the upper half plane model of $\mathbb{H}^2$ so that inverse image $\pi^{-1}(x)$ for $x\in L$ correspond to an  horocycle based at $\infty\in \partial \mathbb{H}^2$. In this way, the parabolic isometry $q\colon \mathbb{H}^2 \to \mathbb{H}^2$ given by  $z\mapsto z+1$ preserves horocycles based at $\infty$, and hence it induces an isometry $q\colon X_n \to X_n$ such that $\pi\circ q =\pi$. The isometry $q$ preserves each hyperbolic plane of $X_n$, and each of these planes corresponds to a unique left coset of $\langle t \rangle$ which can be identified with a particular vertical geodesic. Since any two hyperbolic planes of $X_n$ are at infinite Hausdorff distance, and any two distinct geodesics of $\mathbb{H}^2$ are at infinite Hausdorff distance, it follows that $q(\langle t\rangle)$ is at infinite Hausdorff distance of every left coset of $\langle t \rangle$.
 
\item Consider an amalgamated product $G=\Z^3 \ast_\Z \Z^3$, where $\Z$ corresponds to a maximal infinite cyclic subgroup in both factors, and let $\mathcal{P}$ be the collection consisting of the two $\Z^3$ factors.  The work of Papasoglu~\cite[Theorem 7.1]{Pa05} implies that every $(L,C)$-quasi-isometry of $q\colon G\to G$ is $(L,C,M_q)$-quasi-isometry of pairs $q\colon (G, \mathcal{P}) \to (G, \mathcal{P})$ for some constant $M_q$. To show that $\mathcal{P}$ is qi-characteristic we need to show that $M_q$ can be chosen so that it depends only of $L$ and $C$, and not on $q$. We do not know whether the constant $M_q$ can be chosen so that it only depends on $L$ and $C$. Provided that is true, $\calP$ would be another example of a qi-characteristic collection.
\item \label{Ge01} If $F$ is a finite group and $H$ is a finitely presented one-ended group, then the collection consisting of only the subgroup $H$ of the wreath product $G=F\wr H$ forms a qi-characteristic collection. This is a result of Genevois and  Tessera \cite[Theorem 1.88 and Proof of Theorem 7.3]{GT21}.
\item \label{Ge02} Certain  graph products of finite groups contain qi-characteristic collections, see~\cite[Fact 3.14]{Ge19}.
\end{enumerate}
\end{example}

\section{Quasi-isometry Invariance of QI-Characteristic collections}\label{sec:qi-characteristic-Thm}

In this section we prove the following theorem which is the main result of  the article.

\begin{theorem}\label{thm:qi-characteristic}
Let $X$ be a metric space and let $\mathcal{A}$ be a qi-characteristic collection of subspaces.
If $G$ is a finitely generated group quasi-isometric to $X$, then there is a finite collection of subgroups $\mathcal{P}$ such that   $(G, G/\mathcal{P})$ and $(X, \mathcal{A})$ are quasi-isometric pairs.
\end{theorem}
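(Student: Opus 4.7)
The overall strategy follows the blueprint already outlined in the introduction for \cref{intro2:cor:characteristic:groups}, with the roles of source and target swapped. Fix a quasi-isometry $q\colon G\to X$ with quasi-inverse $\bar q\colon X\to G$, both with common constants $(L,C)$. The plan is to push the left regular action of $G$ on itself to a quasi-action of $G$ on $X$ by setting
\[
\phi_g(x):=q(g\cdot\bar q(x))\qquad (g\in G,\; x\in X).
\]
A routine check shows that each $\phi_g$ is an $(L',C')$-quasi-isometry of $X$ with $(L',C')$ depending only on $L,C$, and that $\phi_{gh}$ is within uniform sup-distance of $\phi_g\circ \phi_h$. Applying the qi-characteristic hypothesis to $\mathcal{A}$ produces a single constant $M=M(L',C')$ so that every $\phi_g$ is an $(L',C',M)$-quasi-isometry of pairs $(X,\mathcal{A})\to(X,\mathcal{A})$. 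Writing $A\sim B$ when $\Hdist(A,B)<\infty$, this defines a (well-defined) action of $G$ on the quotient $\mathcal{A}/{\sim}$.

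Next I would argue that this action has finitely many orbits. The quasi-action is cobounded on $X$ because $q$ is a quasi-isometry, so up to Hausdorff-translation every element of $\mathcal{A}$ meets a fixed bounded set $B_0\subset X$; condition~(2) of \cref{def:qicharacteristic} then limits the number of $\sim$-classes that can occur to finitely many. Choose representatives $A_1,\ldots,A_k\in\mathcal{A}$, one from each orbit, and define
\[
P_i:=\{g\in G : \Hdist_X(\phi_g(A_i),A_i)<\infty\}.
\]
Condition~(3) of \cref{def:qicharacteristic} gives a uniform upper bound on $\Hdist(A,A')$ over all $\sim$-related pairs in the orbit of $[A_i]$, and combined with the near-multiplicativity of $\phi$ this lets one verify that each $P_i$ is closed under products and inverses, hence a subgroup. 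Set $\mathcal{P}=\{P_1,\ldots,P_k\}$.

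The final and hardest step is to show that $q$ is itself a quasi-isometry of pairs $(G,G/\mathcal{P})\to(X,\mathcal{A})$. Surjectivity of the defining relation onto $\mathcal{A}$ comes easily from the orbit construction, since any $A\in\mathcal{A}$ lies at uniformly bounded Hausdorff distance from some $\phi_g(A_i)$. The harder direction is that, for each coset $gP_i$, the image $q(gP_i)$ lies in a uniformly bounded Hausdorff neighbourhood of some element of $\mathcal{A}$. By left-invariance of the word metric on $G$ this reduces, after translating so that $e$ lies on $\bar q(A_i)$, to proving $P_i$ is uniformly Hausdorff-close in $G$ to $\bar q(A_i)$. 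The inclusion $P_i\subset\mathcal{N}_K(\bar q(A_i))$ follows by evaluating $\phi_g$ at a basepoint of $A_i$ and invoking condition~(3) to control the Hausdorff error. The reverse inclusion is what I expect to be the main obstacle: it amounts to showing that $P_i$ acts coboundedly on $\bar q(A_i)$, a coarse analogue of a cocompact stabilizer, and will require carefully combining all three conditions of \cref{def:qicharacteristic}---condition~(2) to produce a finite list of candidate $\sim$-classes meeting small balls near points of $A_i$, and condition~(3) to force the resulting $\phi$-translate to be $\sim$-equivalent to $A_i$ rather than to some parallel class. Once this Hausdorff comparison is established, $q\colon(G,G/\mathcal{P})\to(X,\mathcal{A})$ is a quasi-isometry of pairs, and \cref{prop:SecondMistake} automatically promotes $\mathcal{P}$ to a qi-characteristic collection of subgroups of $G$.
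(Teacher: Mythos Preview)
Your outline matches the paper's proof essentially step for step: the quasi-action $\phi_g=q\circ \mathbf{g}\circ\bar q$, the stabilizers $P_i=\{g:\Hdist(\phi_g(A_i),A_i)<\infty\}$, the orbit-finiteness via conditions~(1)--(2) of \cref{def:qicharacteristic}, and the reduction of the pair statement to a uniform Hausdorff comparison between $q(gP_i)$ and some $A\in\mathcal{A}$. You are also right that the coboundedness of $P_i$ on $A_i$ is the crux; in the paper this is Step~5, modelled on \cite[Lemma~5.2]{KaLe97}. (A minor remark: the subgroup property of $P_i$ needs only the near-multiplicativity of $\phi$, not condition~(3).)

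The genuine gap is that your sketch for the cobounded-stabilizer step is not yet a proof and is missing the key device. Conditions~(2) and~(3) alone do not suffice: if you pick $\alpha,\beta\in G$ sending two points $a,b\in A$ near the basepoint $x_0$, the translates $\phi_\alpha(A)$ and $\phi_\beta(A)$ may land near \emph{different} representatives $B_i,B_j\in\mathcal{F}$, so $\beta^{-1}\alpha$ need not lie in $St(A)$. The paper fixes this by pre-selecting, once and for all, for every ordered pair $(B_i,B_j)$ in $\mathcal{F}$ lying in the same $G$-orbit, an element $g_{i,j}\in G$ with $\Hdist(\phi_{g_{i,j}}(B_i),B_j)\le M$; the finitely many displacements $\dist(\phi_{g_{i,j}}(x_0),x_0)$ give a uniform constant $T$. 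Then $\gamma:=\beta^{-1}g_{i,j}\alpha$ belongs to $St(A)$ and satisfies $\dist(\phi_\gamma(a),b)\le D_0(L,C,M,T)$, which is exactly the cocompactness you need. Conditions~(2) and~(3) are used earlier, to make $\mathcal{F}$ finite and the constant $M$ uniform (the paper's Step~2), but it is this finite family of ``connecting'' elements $g_{i,j}$ that closes the argument.
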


The following corollary is an immediate consequence.

\begin{corollary}\label{cor:characteristic:groups}
Let $G$ be a finitely generated group, let $\mathcal{P}$ be a  qi-characteristic collection of subgroups of $G$.  If $H$ is a finitely generated group quasi-isometric to $G$, then there is a qi-characteristic collection of subgroups $\mathcal{Q}$ of $H$ such that $(H, \mathcal{Q})$ and $(G, \mathcal{P})$ have the same  quasi-isometry type. 
\end{corollary}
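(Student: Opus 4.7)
To prove the corollary, I would apply \cref{thm:qi-characteristic} with $X = G$ (equipped with a word metric) and qi-characteristic collection $\mathcal{A} = G/\mathcal{P}$: the theorem produces a finite collection $\mathcal{Q}$ of subgroups of $H$ and a quasi-isometry of pairs $(H, H/\mathcal{Q}) \to (G, G/\mathcal{P})$. Since $G/\mathcal{P}$ is qi-characteristic, \cref{prop:SecondMistake} immediately transfers this property across the quasi-isometry of pairs, yielding that $H/\mathcal{Q}$ is qi-characteristic, that is, $\mathcal{Q}$ is a qi-characteristic collection of subgroups of $H$.

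The substantive work is therefore the theorem. Let $q\colon G \to X$ be an $(L_0, C_0)$-quasi-isometry with quasi-inverse $\bar{q}$. For each $h \in G$ the composition $\Phi_h := q \circ L_h \circ \bar{q}\colon X \to X$, where $L_h$ denotes left multiplication by $h$ on $G$, is a quasi-isometry of $X$ whose constants $(L, C)$ depend only on $(L_0, C_0)$. The qi-characteristic hypothesis then yields a single $M = M(L, C)$ such that every $\Phi_h$ is an $(L, C, M)$-quasi-isometry of pairs $(X, \mathcal{A}) \to (X, \mathcal{A})$. Defining the equivalence relation $A \sim A' \iff \Hdist(A, A') < \infty$ on $\mathcal{A}$, condition (3) of the qi-characteristic definition bounds each $\sim$-class in $(\mathcal{A}, \Hdist)$, so the rule $h \cdot [A] := [A']$ for any $A'$ with $\Hdist(\Phi_h(A), A') \leq M$ descends to a genuine $G$-action on $\mathcal{A}/{\sim}$.

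Next I would argue that this action has finitely many orbits: coboundedness of the quasi-action of $G$ on $X$ allows every $A \in \mathcal{A}$ to be translated by some $h \in G$ so that $\Phi_{h^{-1}}(A)$ meets a fixed ball around a basepoint $x_0 \in X$, and condition (2) then restricts such translates to finitely many $\sim$-classes. Choose orbit representatives $\mathcal{F} = \{A_1, \ldots, A_k\} \subset \mathcal{A}$, let $P_i := \stab_G([A_i])$, and set $\mathcal{P} := \{P_1, \ldots, P_k\}$. It remains to verify that $q\colon (G, G/\mathcal{P}) \to (X, \mathcal{A})$ is a quasi-isometry of pairs, with Hausdorff-distance witness $M'$ uniform over all cosets.

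The main obstacle I anticipate is this last verification, which by $G$-equivariance and left-invariance of $\Hdist_G$ reduces to exhibiting, for each $i$, a point $g_i \in G$ with $\Hdist_G(g_i P_i, \bar{q}(A_i)) < \infty$. This is the transitivity assertion that the coarse stabilizer $P_i$ of $\bar{q}(A_i)$ under left multiplication in fact acts coboundedly on $\bar{q}(A_i)$. The inclusion $g_i P_i \subseteq \mathcal{N}_K(\bar{q}(A_i))$ is nearly automatic from the definition of $P_i$ once $g_i$ is chosen to lie in $\bar{q}(A_i)$; the reverse inclusion $\bar{q}(A_i) \subseteq \mathcal{N}_{K'}(g_i P_i)$ is the delicate step, and must invoke condition (2) together with the uniformity of $M$ to prevent $\bar{q}(A_i)$ from trailing off in directions that $P_i$ cannot reach. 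Once this is in hand, the quasi-isometry-of-pairs conclusion and then the corollary follow by routine bookkeeping.
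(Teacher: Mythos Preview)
Your proposal is correct and matches the paper's approach: the corollary is deduced immediately from \cref{thm:qi-characteristic} together with \cref{prop:SecondMistake}, exactly as you do in your first paragraph. Your additional sketch of the theorem's proof---quasi-action with uniform constants, finitely many orbit classes via conditions (2) and (3), stabilizer subgroups as the members of $\mathcal{P}$, and the identification of the cobounded-action step as the delicate point---tracks the paper's own argument (the last being the paper's Step~\ref{step:transitive-stabilizers}).
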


\begin{proof}[Proof of Theorem~\ref{thm:qi-characteristic}]
Let $q\colon G \to X $ and $\bar q\colon X \to G$ be $(L_0,C_0)$-quasi-isometries such that $q\circ \bar q$ and $\bar q\circ  q$ are at distance less than $C_0$ from the identity maps on $X$ and $G$ respectively. Let $x_0=q(e)$ where $e$ is the identity element in $G$ and assume, without loss of generality, that $\bar q(x_0)=e$. For $g\in G$, let $\mathbf{g}\colon G\to G$ denote the isometry  given by   $x\mapsto gx$; let $q_g\colon X\to X$ denote the composition $q_g= q \circ \mathbf{g}\circ\bar q$. Then the following statements can be easily verified:
\begin{itemize}
    \item For $g\in G$, $q_g\colon X\to X$ is an $(L,C)$-quasi-isometry where $L=L_0^2$ and $C=L_0C_0+C_0+1>C_0$. 
    \item ($G$ quasi-acts on $X$)  For $g_1,g_2\in G$, the map $q_{g_1g_2}$ is at distance at most $C$ from the map $q_{g_1}\circ q_{g_2}$; and the map $q_{g_1}\circ q_{g_1^{-1}}$ is at distance at most $2C$ from the identity.
    \item ($G$ acts $C_0$-transitively on $X$) For every $x ,y\in X$ there is $g\in G$ such that $\dist_G(x, q_g(y))\leq  C_0$.
\end{itemize}

For $A\in \mathcal{A}$, define  
\[  St (A) = \{ g\in G \colon \Hdist (q_g(A), A) < \infty \} .\] 

\begin{step}
For any $A\in \mathcal{A}$, $St(A)$ is a subgroup of G. 
\end{step} 
\begin{proof}
Let $g_1,g_2\in St(A)$, then  
\begin{equation}\nonumber
\begin{split}
      \Hdist(q_{g_1^{-1}g_2}(A), A) & \leq L \Hdist(q_{g_1} \circ q_{g_1^{-1}g_2}(A), q_{g_1}(A)) + C \\ 
      & \leq L \Hdist(q_{g_2}(A), q_{g_1}(A)) + LC + C \\
      & \leq L \Hdist(q_{g_1}(A), A) + L \Hdist(q_{g_2}(A), A)  + LC +C < \infty.
\end{split}
\end{equation} 
Hence $g_1^{-1}g_2 \in St(A)$. 
\end{proof} 

\begin{step}\label{step:translation} \label{step:Orbits} 
There is a constant $M_1>0$ and a finite subset $\mathcal{F}$ of $\mathcal{A}$ such that: 
\begin{enumerate}
    \item For any $A\in \mathcal{A}$ and $g\in G$ there there is $A'\in \mathcal{A}$ such that $\Hdist(q_g(A), A')<M_1$.
    \item For any $A\in \mathcal{A}$ and $a\in A$ there is  $g\in G$ and $B\in \mathcal{F}$ such that $\dist(q_g(a),x_0)\leq C_0$ and  $\Hdist(q_g(A), B)\leq M_1$ and  $\Hdist(q_{g^{-1}}(B) , A )\leq M_1$.
\item For any $A,A'\in \mathcal{A}$, $\Hdist(A, A')\leq M_1$ or $\Hdist(A, A') = \infty$.   
\end{enumerate}
 
\end{step}
\begin{proof}
Let $M=M(L,C)$ be provided by  Definition~\ref{def:qicharacteristic} for the collection $\mathcal{A}$. This constant satisfies the first item.  Since $\mathcal{A}$ is qi-characteristic, the collection 
\[  \mathcal{D} = \{A\in \mathcal{A} \colon \text{there is $a\in A$ such that $\dist(x_0, a) < M+C$} \} \]
contains only finitely many non-coarse equivalent elements. In particular, there is a finite subset $\mathcal{F}=\{B_1, \ldots , B_m\}$  of $\mathcal{A}$ such that any element of $\mathcal{D}$ 
is at finite Hausdorff distance from an element of $\mathcal{F}$. 
Since $\mathcal{A}$ is  qi-characteristic, there is a constant $K>0$ such that for any $A\in \mathcal{A}$ and $B\in \mathcal{F}$, if $\Hdist(A,B)<\infty$ then $\Hdist(A,B)<K$.

Let $A\in \mathcal{A}$ and $a\in A$.  Since $G$ quasi-acts $C_0$-transitively on $X$,  there is $g\in G$ such that $\dist(q_g(a), x_0) \leq C_0<C$.  Since $\mathcal{A}$ is qi-characteristic,  
there is $B'\in \mathcal{A}$ such that $\Hdist(q_g(A),B')< M$. 
Observe that $B' \in \mathcal{D}$, since   $\dist(q_g(a),x_0)\leq  C$ and $q_g(a)\in q_g(A)$ imply that
$\dist(x_0, B')\leq \dist(x_0, q_g(a))+\dist(q_g(a), B') \leq C+M$.  Hence  there is $B\in \mathcal{F}$ such that $\Hdist(B,B')<K$, and therefore
$\Hdist(q_g(A), B) \leq C+M+K$. Since  $\Hdist(A, q_{g^{-1}}(B)) \leq L \Hdist(q_g(A), B) + 2C$, the first and second statement hold with the constant $M_0=L(C+M+K) +2C$.  

For the third item, let $A, A' \in \mathcal{A}$ such that $\Hdist (A, A')<\infty$. Then, by previous paragraph, there is $g  \in G$ and $B\in \mathcal{F}$ such that $\Hdist(q_{g}(A), B)\leq M_0$. Since $\mathcal{A}$ is qi-characteristic,  
 there is $B'\in \mathcal{A}$ such that $\Hdist(q_g(A'), B')\leq M_0$. 
It follows that $\Hdist (B, B')$ is finite and hence bounded by $K$. Therefore
\begin{equation}\nonumber
\begin{split}
\Hdist(A, A') & \leq L\Hdist(q_g(A), q_g(A')) + C \\  & \leq L(2M_0+K)+C.
\end{split}
\end{equation} 
The proof concludes by defining $M_1$ as the constant on the right of the previous inequality. 
\end{proof}

\begin{step} \label{step:ConstantM2}
There is a constant $M_2$ with the following property. For every $A\in \mathcal{A}$
\[  St (A) = \{ g\in G \colon \Hdist (q_g(A), A) \leq M_2 \} .\]
\end{step}
\begin{proof}
Suppose $g\in St(A)$. Since $\mathcal{A}$ is qi-characteristic,  
there is $A'\in \mathcal{A}$ such that $\Hdist (q_g(A), A')\leq M_1$.  Since 
$\Hdist (A, q_g(A)) <  \infty$, we have that $\Hdist (A, A')<\infty$  and 
hence
\[ \Hdist (A, q_g(A))\leq \Hdist(A, A')+\Hdist(A', q_g(A)) \leq 2M_1   . \]
As a consequence, 
\[  St (A) = \left \{ g\in G \colon \Hdist (q_g(A), A) \leq 2M_1 \right \}\]
and to conclude let $M_2=2M_1$ is defined.
\end{proof}

From here on let $M$ be $\max\{M_1, M_2, LM_1+3C\}+1$.

\begin{step}\label{step:conjugation}
If $A,A'\in \mathcal{A}$, $g\in G$ and $\Hdist(q_g(A), A')$ is finite, then 
\begin{equation}\nonumber g St (A) g^{-1} =  St (A').\end{equation}
\end{step}
\begin{proof}
Let $h \in St(A')$. Note that
\begin{equation}\nonumber
\begin{split}
\Hdist&(q_{g^{-1}hg} (A), A) \leq \\
& \leq L\Hdist(q_h\circ q_g (A), q_g(A)) +5LC  \\ 
& \leq   L \Hdist(q_h\circ q_g (A), q_h (A')) + L\Hdist(q_h (A'), A')  + L\Hdist(A', q_g(A)) +5LC  \\ 
& \leq   L^2 \Hdist(  q_g (A), A' ) + L\Hdist(q_h (A'), A')  + L\Hdist(A', q_g(A)) + 6LC < \infty. 
\end{split}
\end{equation}
Hence $g^{-1}hg \in St (A)$, and we conclude that $g St (A) g^{-1} \supseteq  St (A')$.  The other inclusion is proved analogously.
\end{proof}

The following step is a version of~\cite[Lemma 5.2]{KaLe97}.

\begin{step}\label{step:transitive-stabilizers}
There is $D_0>0$ such that for any $A\in \mathcal{A}$ and $a\in A$, then  \[\Hdist(A, St (A)a)\leq D_0\] where $St (A)a=\{ q_g(a)| g\in St(A) \}$.
\end{step}
\begin{proof}
Recall $\mathcal{F}=\{B_1, \ldots , B_m\}$. Let
\[
I   =\{(i,j) \colon \text{there is $g\in G$ such that $\Hdist(q_g(B_i),B_j)\leq M$}\} 
\]
and note that
\[I= \{(i,j) \colon \text{there is $g\in G$ such that $\Hdist(q_g(B_i),B_j)< \infty$}\}
\]
by Step~\ref{step:translation}.
 For each $(i,j)\in I$ choose $g_{i,j} \in G$ such that $\Hdist(q_{g_{i,j}}(B_i),B_j)\leq M$.
Let \[T=\max\{ \dist(q_{g_{i,j}}(x_0), x_0) \colon (i,j) \in I\} <\infty.\] 

The constant $D_0=D_0(L,C, M, T)$ is defined at the end of the proof and it is larger than $M$. 
Let $A\in \mathcal{A}$ and $a\in A$. 
Observe  $St (A)a$ is contained in the $M$-neighborhood of $A$.  We show below that for any $b\in A$ there is $\gamma\in St(A)$ such that $\dist (q_\gamma (a), b)\leq D_0$, which implies $\Hdist(A, St (A)a)\leq D_0$.

Let $b\in A$. Since $G$ acts $C_0$-transitively on $X$, there are $\alpha, \beta \in G$ such that
\[ \dist (q_\alpha (a), x_0)\leq C_0 \quad \text{and } \dist(q_\beta (b), x_0) \leq C_0 .\]
By Step~\ref{step:translation}, there are $B_i$ and $B_j$ in $\mathcal{F}$ such that $\Hdist(q_\alpha(A),B_i)\leq M$ and $\Hdist(q_\beta(A), B_j)\leq M$, therefore $(i,j)\in I$. To simplify notation, let $g$ denote the corresponding element $g_{i,j}$. Let  $\gamma= \beta^{-1} g \alpha$ and note that $\gamma \in St (A)$ since 
\begin{equation}\nonumber
\begin{split}
 \Hdist(q_\gamma(A), A) 
 & \leq L\Hdist(q_g \circ q_\alpha (A), q_\beta (A) ) + 4LC \\ 
 & \leq  L\Hdist(q_g \circ q_\alpha (A), q_g  (B_i) ) + L\Hdist(q_g  (B_i),  B_j ) + L\Hdist(B_j, q_\beta (A) ) +4LC\\
 & \leq (L^2M+LC) + LM + LM +4LC.
\end{split}
\end{equation}
 Moreover,   
\begin{equation}\nonumber
\begin{split}
\dist(q_\gamma (a),b) 
&  \leq L\dist(q_g\circ q_\alpha(a), q_\beta ( b) ) + 4 LC \\
&  \leq L\dist(q_g\circ q_\alpha(a), q_g(x_0) ) + L\dist(q_g(x_0) , x_0 ) + L\dist(x_0,  q_\beta ( b) ) + 4LC \\
& \leq  (L^2C_0 +LC) +  LT +LC_0 + 4LC = D'. 
\end{split}  
\end{equation}
Let $D_0=\max\{D',M\}$.
 \end{proof}

\begin{step}\label{step:FiniteOrbits}
Let $St(\mathcal{A}) = \{ St_M(A) \colon A\in \mathcal{A} \}$.
Then $G$ acts (from the left) on $St(\mathcal{A})$ by conjugation with finitely many orbits.  Moreover $\{St(A)\colon A\in \mathcal{F}\}$ contains a representative of each $G$-orbit.
\end{step}
\begin{proof}
First, we  verify that the action is well defined.
Let $g\in G$ and $A\in \mathcal{A}$. Since $\mathcal{A}$ is qi-characteristic  
 there is $A'$ such that $\Hdist(q_g(A),A')<M$. Then, by Step~\ref{step:conjugation}, we have that $gSt (A)g^{-1} = St (A')$.  

To verify that the action has finitely many orbits, let $A\in \mathcal{A}$. Then by Step~\ref{step:Orbits}, there is $g\in G$ and $B\in \mathcal{F}$ such that $\Hdist(q_g(A), B)\leq M$. Hence, by Step~\ref{step:conjugation}, $gSt (A)g^{-1}=St (B)$. Therefore $\{St (B)\colon B\in \mathcal{F}\}$ contains a collection of representatives of the $G$-orbits of $St(\mathcal{A})$; since $\mathcal{F}$ is finite the claim follows. 
\end{proof}

\begin{step}\label{step:computation}
Let $g\in G$, $U\subset G$ and $a,b\in X$. The the following statements hold.
\begin{itemize}
    \item $\dist (q(g), q_g(x_0))\leq LC_0+C_0$,
    \item $\Hdist (q(U), Ux_0) \leq LC_0+C_0$, and 
    \item $\Hdist (Ua, Ub) \leq L\dist (a,b) +C$. 
\end{itemize}
Here $Ux_0$ denotes the set $\{q_g(x_0) \colon g\in U\}$, and $Ua$ and $Ub$ are defined analogously.
\end{step}
\begin{proof}
For the first inequality, recall that  $\bar q(x_0)=e$, and note that 
\[ \dist (q_g(x_0), q(g)) \leq L_0 \dist(\mathbf{g}\circ \bar q (x_0), \bar q\circ q (g)) +C_0 \leq LC_0+C_0.\]
The second statement follows from the first one. The third inequality is a consequence of $q_g$ being an $(L,C)$-quasi-isometry for every $g\in G$.
\end{proof}

Define
\[  K_1= 3 ( LM+LC+M+10C +D_0), \qquad K_2= 3( LK_1+10C )> K_1 \]
and 
\[ D = 5 K_2  \]
Suppose 
\[ \mathcal{F}=\{B_1, \ldots , B_m\}\]
and let $P_i=St(B_i)$ and define 
\[\mathcal{P}=\{P_1, \ldots , P_m\}.\]

\begin{step}\label{step:final}
For any $g\in G$ and $P\in \mathcal{P}$, the set $\{A \in \mathcal{A} \colon \Hdist(q(g P), A) < \infty\}$ is bounded in $(\mathcal{A}, \Hdist)$. Moreover, there is  $A\in \mathcal{A}$ such that $\Hdist(q(g P), A) \leq D$.    
\end{step}
\begin{proof}
The first statement is a direct consequence of the third item of the definition of qi-characteristic. It is left to prove the existence of $A\in \mathcal{A}$ such that $\Hdist(q(g P), A) \leq D$.
By definition of $\mathcal{P}$, there is  $B\in \mathcal{F}$ such that $P=St(B)$. Since $\mathcal{A}$ is qi-characteristic, there is $A\in \mathcal{A}$ such that 
\begin{equation}\label{ineq:00}
\Hdist(q_g (B), A)< M<K_2.
\end{equation}
By the triangle inequality,  
\begin{equation}\label{ineq:step8}
\begin{split}
 \Hdist (q(g P), A) & \leq 
\Hdist (
q(g P), q_g ( q(P) ) 
)
+
\Hdist (
q_g ( q(P) ), q_g (B) 
)
+
\Hdist(
q_g (B), A
) \\
&  
\leq 
\Hdist (
q(g P), q_g ( q(P) )
)
+
\Hdist (
q_g ( q(P) ), q_g (B) 
)
+K_2
\end{split}    
\end{equation}
To conclude the proof, we show below that  the two terms on the last line of inequality~\eqref{ineq:step8} are bounded by
$K_2$, and hence $ \Hdist (q(g P), A)<D$.  For the first term,  
\begin{equation}
    \Hdist (q(g P), q_g ( q(P) ) \leq L_0\Hdist(P, \bar q \circ q (P))+C_0 \leq L_0C_0+C_0<K_2. 
\end{equation}
since $q$ is an $(L_0,C_0)$-quasi-isometry, the action of $g$ on $G$ is an isometry, and $\bar q\circ q$ is
at distance less than $C_0$ from  the identity. For the second term, observe that
\begin{equation}\label{ineq:01}
\begin{split}
    \Hdist (q_g ( q(P) ), q_g (B)  ) & \leq L\Hdist (q(P), B) +C 
\end{split}
\end{equation}
since $q_g$ is an $(L,C)$-quasi-isometry. To argue that 
  $ \Hdist (q_g ( q(P) ), q_g (B)  )<K_2$ is enough to show that $\Hdist (q(P), B) <K_1$.
Since $B\in \mathcal{F}$, there is $b\in B$ such that $\dist(x_0, b) < M+C$. The triangle inequality implies
\begin{equation}\label{ineq:02}
     \Hdist (q(P), B)   \leq     \Hdist(q(P), Px_0) +   \Hdist(Px_0, Pb) +   \Hdist(Pb, B)  .
\end{equation}
By Step~\ref{step:computation},
\begin{equation}\label{ineq:03}
\Hdist(q(P), Px_0) \leq LC_0+C_0,
\end{equation}
and     
\begin{equation}\label{ineq:04}
\Hdist(Px_0, Pb) \leq L \dist(x_0, b)+C \leq L(M+C)+C.
\end{equation}
By Step~\ref{step:transitive-stabilizers}, 
\begin{equation}\label{ineq:05}
\Hdist(Pb, B) \leq D_0.
\end{equation}
Then inequalities~\eqref{ineq:03},~\eqref{ineq:04}, and~\eqref{ineq:05} imply that the expression on the right of~\eqref{ineq:02} is bounded by $K_1$ which completes the proof.
\end{proof}

 \begin{step}\label{step:final2}
For any $A\in \mathcal{A}$, the set $\{gP\in G/\mathcal{P} \colon \Hdist (q(g P), A) < \infty \}$ is  finite, and there is $gP\in G/\mathcal{P}$ such that $\Hdist (q(gP), A)\leq D$.  
\end{step}
\begin{proof}
Let  $A\in \mathcal{A}$. Let $P\in \mathcal{P}$ and suppose $\Hdist(q(P), A)<\infty$ and   $\Hdist(q(gP), A)<\infty$. Then
\[\Hdist(q_g(A), A) \leq \Hdist(q_g(A), q(gP)) + \Hdist(q(gP), A) < \infty \]
and hence $g \in St(A)$.  \cref{step:ConstantM2} implies that $\Hdist(gP, P) \leq LM+C$.  Since $G$ is locally finite, there are only finitely many left cosets $gP$ such that  $\Hdist(q(gP), A)<\infty$. Since $\mathcal{P}$ is finite, it follows that the set $\{gP\in G/\mathcal{P} \colon \Hdist(q(g P), A) < \infty \}$ is a finite union of finite sets and hence finite.
 
Now we   prove that there is $gP\in G/\mathcal{P}$ such that $\Hdist(q(gP), A)\leq D$.  By Step~\ref{step:Orbits} , there is  $g\in G$ and $B\in \mathcal{F}$ such that  $\Hdist(q_g(B), A)\leq M$. 
By Step~\ref{step:conjugation}, $St (A) =g St (B)g^{-1}$.
Let $P\in \mathcal{P}$ such that $P=St(B)$.  
Since $B\in \mathcal{F}$ there is $b\in B$ such that $\dist(x_0,b)\leq M+C$. Let $a\in A$ such that $\dist(q_g(b),a)\leq M$. It follows that 
\begin{equation}\nonumber
\begin{split}
\Hdist(gPb, gPg^{-1}a) &\leq \Hdist(gPb, gP q_{g^{-1}}(a)) +  \Hdist(gP q_{g^{-1}}(a), gPg^{-1}a) \\
& \leq  L\dist(b, q_{g^{-1}}(a))+C  +  C\\
& \leq  L(LM+C+2C)+2C \leq K_2.  
\end{split}
\end{equation}  
By Step~\ref{step:computation} and Step~\ref{step:transitive-stabilizers},
\begin{equation}\nonumber
\begin{split}
\Hdist&(q(gP), A) \leq \\
& \leq \Hdist(q(gP), gPx_0)+\Hdist(gPx_0, gPb)+\Hdist(gPb,gPg^{-1}a)+\Hdist( gPg^{-1}a, A)\\
& \leq (LC_0+C_0) + (L\dist(x_0,b)+C) + K_2 + D_0\\
& \leq K_2+K_2+K_2+D_0 < D. \end{split}  
\end{equation}
\end{proof}

To conclude the proof of the theorem, observe that $q\colon G\to X$ is a quasi-isometry of pairs $q\colon (G, G/\mathcal{P})\to (X, \mathcal{A})$ as a consequence of Steps~\ref{step:final} and~\ref{step:final2}.
\end{proof}

\section{Filtered Ends of Pairs}\label{section:ends:of:metric:pairs}

In this section, the following result is proved. 

\begin{definition}
Let $\mathsf{QPMet}$ be the category  whose objects are pairs $(X,C)$ where $X$ is a metric space and $C$ is a non-empty subspace;  morphisms $ (X,C)\to (Y,D)$ are  quasi-isometric maps $f\colon X\to Y$ such that $D \subseteq f(C)^{+r}$ for some $r\geq 0$, where $f(C)^{+r}$ is the $r$-neighborhood of $f(C)$; and the composition law is the standard composition of functions. 
\end{definition}

\begin{theorem}\label{thm:endsFunctor2}
There is a covariant functor
\[ \Ends\colon \mathsf{QPMet} \to \mathsf{Sets}\]
with the following properties:
\begin{enumerate}
    \item \label{thm:Ends:item:2} If  $f\colon (X,C) \to (X,C)$ is a morphism such that $f$ is at distance at most $r$ from the identity function on $X$, then $\Ends(f)$ is the identity morphism of $\Ends(X,C)$.
    \item \label{thm:Ends:item:3}  If $f\colon X\to Y$ is a quasi-isometry, $C\subseteq X$  and $D\subseteq Y$ and $\Hdist(f(C),D)$ is finite, then $\Ends(f)\colon \Ends(X,C)\to\Ends(Y,D)$ is a bijection.
    \item \label{thm:Ends:item:1} If $X$ is a proper geodesic  metric space  and $C\subseteq X$ is compact, then there is a natural bijection $\Ends(X,C) \to \mathsf{Ends}(X)$, where $\mathsf{Ends}(X)$ is defined as   in~\cite{BH99}. 
    \item \label{thm:Ends:item:4} If $X$ is the Cayley graph with the combinatorial path metric of a finitely genereted group $G$ and $P \leq G$, then the cardinality of $\Ends(X,P)$ coincides with the number of coends $\tilde e(G, P)$ as defined in Bowditch~\cite{Bow02}.
\end{enumerate}
\end{theorem}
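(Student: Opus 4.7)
The plan is to construct $\Ends(X,C)$ as an inverse limit of coarse path components that are filtered by bounded neighborhoods of $C$. For $r\geq 0$ write $C^{+r}$ for the $r$-neighborhood of $C$ in $X$, and for a bounded set $K\subseteq X$ containing $C^{+r}$ for some $r$, say two points $x,y\in X\setminus K$ are $1$-\emph{connected} in $X\setminus K$ if they can be joined by a finite chain $x=x_0,x_1,\dots,x_n=y$ in $X\setminus K$ with $\dist(x_i,x_{i+1})\leq 1$. Let $\pi_0(X\setminus K)$ be the set of equivalence classes, ordered by $K\subseteq K'$ giving a restriction $\pi_0(X\setminus K')\to\pi_0(X\setminus K)$. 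I would then set
\[
\Ends(X,C) \;=\; \varprojlim_{K} \pi_0(X\setminus K),
\]
where $K$ ranges over bounded subsets containing some $C^{+r}$. An equivalent viewpoint, useful later, is as equivalence classes of proper $1$-coarse rays $\rho\colon\mathbb{N}\to X$ that leave every $C^{+r}$, where $\rho\sim\rho'$ whenever for every such $K$ they eventually lie in the same class of $\pi_0(X\setminus K)$.

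For functoriality I would argue that if $f\colon(X,C)\to(Y,D)$ is a morphism, with $f$ an $(L,C_0)$-quasi-isometric map and $D\subseteq f(C)^{+r}$, then (i) $f$ sends $1$-coarse chains to $L(1)+C_0$-coarse chains, so one can rescale to a $1$-coarse chain by adding intermediate points, and (ii) for any bounded $K\subseteq Y$ containing $D^{+s}$ there is a bounded $K'\subseteq X$ containing some $C^{+s'}$ with $f(X\setminus K')\subseteq Y\setminus K$; this gives a compatible system of maps on the inverse limits, and composition is preserved up to bounded perturbation, hence on the level of $\Ends$ strictly.

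Properties \eqref{thm:Ends:item:2} and \eqref{thm:Ends:item:3} then follow formally. If $\dist(f,\mathrm{id}_X)\leq r$ then the two maps on $\pi_0(X\setminus K)$ induced by $f$ and $\mathrm{id}_X$ agree after replacing $K$ by a slightly larger bounded set, so they induce the same map on the inverse limit. Given a quasi-isometry $f\colon X\to Y$ with $\Hdist(f(C),D)<\infty$, a quasi-inverse $\bar f$ is itself a morphism $(Y,D)\to(X,C)$ and both compositions are bounded perturbations of identities, so $\Ends(f)$ and $\Ends(\bar f)$ are mutually inverse. Property \eqref{thm:Ends:item:1} is a cofinality argument: if $X$ is proper geodesic and $C$ is compact, then the sets $K$ containing $C^{+r}$ are cofinal among compact subsets of $X$, and on each such complement the $1$-coarse components agree with the usual path components of $X\setminus K$ because $X$ is proper and geodesic; this gives a natural bijection with $\mathsf{Ends}(X)$ as in~\cite{BH99}.

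The main obstacle is property \eqref{thm:Ends:item:4}, the identification with Bowditch's coends $\tilde e(G,P)$. My plan is to go through the ray description of $\Ends(X,P)$ and match it with the Kropholler--Roller algebraic number of ends of the pair, which Bowditch proved equal to $\tilde e(G,P)$; alternatively one can pass through Geoghegan's description via the covering $\tilde X/P\to X/G$ and the ends of $\tilde X/P$, comparing our inverse-limit with $\varprojlim \pi_0$ of complements of the preimages of balls. The delicate point is that the filtering by $P^{+r}$ rather than by a fixed fundamental-domain compactification requires an argument that $P$ acts cocompactly on each $P^{+r}$ and that the quotient by $P$ of the complements recovers the ``ends of $P\backslash G$ at infinity'' used by Bowditch; this occupies the separate \cref{section:filtered:ends:coends} promised in the introduction and provides the bridge between the coarse geometric definition used here and the algebraic/topological definitions of $\tilde e(G,P)$.
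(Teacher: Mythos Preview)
Your construction has a genuine gap at the very first step. You define
\[
\Ends(X,C) = \varprojlim_{K} \pi_0(X\setminus K)
\]
over \emph{bounded} sets $K$ containing some $C^{+r}$. But the main case of interest is when $C$ is unbounded --- for instance $C=P$ an infinite subgroup in item~\eqref{thm:Ends:item:4}. Then $C^{+r}$ is itself unbounded for every $r$, so no bounded $K$ can contain it: your index set is empty, and the inverse limit is a single point regardless of the geometry. The filtering must be by the neighborhoods $C^{+\mu}$ themselves (or sets squeezed between two such neighborhoods), not by bounded sets. This is exactly what the paper does: it sets $C_{(\sigma,\mu)}=\mathcal{C}_\infty(X-C^{+\mu},\sigma)$ and takes the inverse limit over $\mu$.

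There is a second gap in your functoriality argument. You fix the coarse scale at $1$ and propose to ``rescale to a $1$-coarse chain by adding intermediate points''. This interpolation step requires $Y$ to be geodesic (or at least large-scale geodesic); for general objects of $\mathsf{QPMet}$ there is no reason points at distance $L+C_0$ can be joined by a $1$-chain. The paper avoids this by keeping the scale $\sigma$ as a parameter: an $(L,C_0)$-coarse Lipschitz map sends $\sigma$-chains to $(L\sigma+C_0)$-chains, giving maps $\Ends(X,C,\sigma)\to\Ends(Y,D,L\sigma+C_0)$, and then one takes a \emph{direct} limit over $\sigma$ to obtain $\Ends(X,C)$. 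This double limit (inverse over $\mu$, direct over $\sigma$) is what makes the functor well-defined on all of $\mathsf{QPMet}$.

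Finally, for item~\eqref{thm:Ends:item:4} the paper's route is shorter than either of the ones you sketch: one checks directly that each $P^{+\mu}$ lies in Bowditch's collection $\mathscr{S}_0(P)$ of connected $P$-invariant subgraphs with compact quotient, and that the family $(P^{+\mu})_{\mu>0}$ is cofinal in $\mathscr{S}_0(P)$. The inverse limits therefore agree, with no detour through Kropholler--Roller or Geoghegan's covering-space description.
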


\begin{definition}
Given a pair $(X,C)$ in $\mathsf{QPMet}$, the set $\Ends(X,C)$ is called the \emph{set of filtered ends of $(X,C)$}.
\end{definition}

\begin{remark}
In Geoghegan's book \cite[Section~14.3]{Ge08} there is an alternative approach to filtered ends that also coincides with Bowditch's approach~\cite{Bow02}. 
Geoghegan remarks the existence of a functor analogous to the one in \cref{thm:endsFunctor2}, specifically, from the category   whose objects are \emph{well-filtered CW-complexes of locally finite type} and morphisms are filtered maps. This alternative framework  is  suitable to study  filtered ends from an algebraic topological perspective.  Geoghegan's functor also satisfies properties (\ref{thm:Ends:item:1}) and (\ref{thm:Ends:item:4}) as a natural consequence of the definition, while properties (\ref{thm:Ends:item:2}) and (\ref{thm:Ends:item:3}) are not addressed as the book does not approach coarse geometry aspects of filtered ends. Our approach is suitable to study filtered ends from a coarse geometrical point of view.
\end{remark}

\subsection{Coarsely connected components}

Let $(X,\dist)$ be a metric space. A subset $A$ of $X$ is \emph{$\sigma$-coarsely connected} if for any pair of points $a,b\in A$ there is a finite sequence $a=x_0,x_1,\dots ,x_n=b$ of elements of $A$ such that $\dist(x_i,x_{i+1})\leq\sigma$ for each index $i$. We call such a sequence a \emph{$\sigma$-quasi-path} from $a$ to $b$ in $A$.

For $\sigma\geq 0$, let $\mathcal{C}(X,\sigma)$ denote the collection of $\sigma$-coarsely connected components of $X$.   Observe that if $\sigma'\geq \sigma$ then there is a  function
\begin{equation}\label{eq:IncreasingSigma} \rho\colon \mathcal{C}(X,\sigma) \to \mathcal{C}(X, \sigma')  \end{equation}
that satisfies  that $A\subseteq \rho(A)$ for any $A\in \mathcal{C}(X,\sigma)$. Let $\mathcal{C}_\infty (X,\sigma)$ be the collection of unbounded components in $\mathcal{C}(X,\sigma)$. Observe that if $\sigma=0$ then $\mathcal{C}_\infty(X,\sigma)$ is the empty set.

\begin{remark}\label{rem:rho-surjective}
For $\sigma\leq \sigma'$, the function \[\rho\colon \mathcal{C}_\infty(X,\sigma) \to \mathcal{C}_\infty(X, \sigma') \] is not necessarily  surjective nor injective. 
As an example, fix an integer $m>1$ and let $A=\{n\in \Z:n=0 \text{ or } n\geq m\}$ consider $X=\R\times A$ with the metric $d=\dist(\alpha\times \ell, \beta\times k)$ defined as follows: if $\alpha=\beta$ then $d=|\ell-k|$; if $\alpha \neq \beta$ then $d=|\alpha-\beta|+|\ell|+|k|$. In this case, $C_\infty(X, \sigma)$ has cardinality $1$ for $0<\sigma<1$, infinite cardinality  if $1\leq \sigma<m$, and cardinality $1$ again for $\sigma\geq m$. In particular $\rho$
is  not surjecive if $\sigma=1/2$ and $\sigma'=1$; and it is not injective if $\sigma=1$ and $\sigma'=m$. Note that the failure of surjectivity arises in the case that there are elements of $\mathcal{C}_\infty(X, \sigma')$ that are partitioned into bounded $\sigma$-connected components.
\end{remark}

\begin{proposition}\label{prop:induced:function:C}
If  $f\colon X\to Y$ is a $(\lambda,\epsilon)$-coarse Lipschitz map and $\sigma\geq 0$, then there is an induced function $f_*\colon \mathcal{C}(X,\sigma) \to \mathcal{C}(Y, \lambda\sigma+\epsilon)$ where $f_*(A)$ is the unique element of $\mathcal{C}(Y,\lambda\sigma+\epsilon)$ containing $f(A)$. In particular, there is an induced function\[ f_*\colon \mathcal{C}_\infty(X,\sigma)\to \mathcal{C}_\infty(Y,\lambda\sigma+\epsilon)\]  
\end{proposition}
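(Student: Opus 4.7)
The plan is to define $f_*$ on a $\sigma$-coarsely connected component $A$ of $X$ as the unique $(\lambda\sigma+\epsilon)$-coarsely connected component of $Y$ containing $f(A)$. First I would check that $f(A)$ is itself $(\lambda\sigma+\epsilon)$-coarsely connected: given $f(a), f(b) \in f(A)$ with $a,b \in A$, pick a $\sigma$-quasi-path $a = x_0, x_1, \ldots, x_n = b$ in $A$ and apply the coarse Lipschitz inequality to each consecutive pair,
\[
  \dist_Y(f(x_i), f(x_{i+1})) \leq \lambda\,\dist_X(x_i, x_{i+1}) + \epsilon \leq \lambda\sigma + \epsilon,
\]
so that $f(x_0), \ldots, f(x_n)$ is a $(\lambda\sigma+\epsilon)$-quasi-path in $f(A)$ from $f(a)$ to $f(b)$. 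Since $Y$ is partitioned by its $(\lambda\sigma+\epsilon)$-coarsely connected components, there is a unique such component $B$ with $f(A) \subseteq B$, and setting $f_*(A) := B$ yields a well-defined function $\mathcal{C}(X, \sigma) \to \mathcal{C}(Y, \lambda\sigma+\epsilon)$ that depends only on $f(A)$ as a subset of $Y$.

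For the restriction to $\mathcal{C}_\infty$, I would need $f(A)$ to be unbounded in $Y$ whenever $A$ is unbounded in $X$, so that the component $f_*(A)$ containing $f(A)$ is also unbounded. This is the one delicate point: a bare coarse Lipschitz hypothesis is insufficient in general, since a constant map would collapse every component to a bounded one. In the context in which this proposition is applied — namely morphisms in $\mathsf{QPMet}$, which are quasi-isometric maps — the restriction of $f$ to $X \to f(X)$ admits a coarse lower bound of the form $\dist_Y(f(x), f(y)) \geq \tfrac{1}{L}\dist_X(x,y) - C$, and this bound forces $f(A)$ to be unbounded whenever $A$ is. Under that quantitative hypothesis, the restricted map $f_*\colon \mathcal{C}_\infty(X,\sigma) \to \mathcal{C}_\infty(Y,\lambda\sigma+\epsilon)$ is well-defined. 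I expect the unboundedness step to be the only subtle issue; the rest of the proof is a routine transport of a quasi-path under the Lipschitz inequality together with the fact that coarsely connected components form a partition.
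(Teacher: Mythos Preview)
Your approach is correct and matches the paper's: show $f(A)$ is $(\lambda\sigma+\epsilon)$-coarsely connected by pushing a $\sigma$-quasi-path through the Lipschitz inequality, then use that $(\lambda\sigma+\epsilon)$-components partition $Y$.

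You have in fact been more careful than the paper on the second part. The paper's proof simply asserts ``if $A\in \mathcal{C}(X,\sigma)$ is unbounded, then $f(A)$ is unbounded'' with no further comment. As you note, this fails for a bare coarse Lipschitz map (a constant map is $(\lambda,\epsilon)$-coarse Lipschitz for any $\lambda\geq 1$, $\epsilon\geq 0$), so the statement as written is not quite correct for $\mathcal{C}_\infty$. Your diagnosis and fix are exactly right: every place in the article where the $\mathcal{C}_\infty$-version is invoked (e.g.\ in the construction of $f_i\colon C_i\to D_{i^f}$), the map $f$ is a quasi-isometric map, so the lower distance bound $\dist_Y(f(x),f(y))\geq \tfrac{1}{L}\dist_X(x,y)-C$ is available and forces $f(A)$ to be unbounded. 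Nothing further is needed.
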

\begin{proof}
Let $A\in \mathcal{C}(X,\sigma)$. Then $f(A)$ is  $\lambda\sigma+\epsilon$ coarsely connected and hence
there is a unique $B\in \mathcal{C}(Y,\lambda\sigma+\epsilon)$ such that $f(A)\subseteq B$. Observe that  if   $A\in \mathcal{C}(X,\sigma)$ is   unbounded, then $f(A)$ is unbounded and the second statement follows.  
\end{proof}

\subsection{Definition of the set of filtered ends $\Ends(X,C)$} 

Let $\langle I, \preceq \rangle$ denote the directed set where $I=(0,\infty)\times [0,\infty)$ and for $\alpha, \beta \in I$, 
\[ \alpha \preceq \beta \quad \text{if and only if } \alpha_1\leq \beta_1 \text{ and } \alpha_2 \geq \beta_2.\]
We use the following notation,  for $\alpha=(\sigma,\mu) \in I$ let 
\[ I_\alpha = \{(x,y) \in I \colon \sigma\leq x \text{ and } \mu \leq y \}, \]
and
\[J_\alpha = \{(\sigma,y) \in I \colon \mu \leq y \}. \]
For a positive real number $\sigma$, let
\[ I_{\sigma } = \{( x, y) \in I\colon  \sigma \leq x \text{ and } 0\leq y \} ,\]
and 
\[ J_{\sigma} = \{(\sigma, y) \in I\colon  0\leq y  \}. \]
\begin{remark}
Observe that $J_\alpha$ is a coinitial subset of $I_\alpha$. In particular,  $J_\sigma$ is a coinitial subset of $I_\sigma$. 
\end{remark}

Let $(X,\dist)$ be a metric space, and let $C$ be a subset of $X$. For $\alpha=(\sigma, \mu) \in  I$, let $C_\alpha$ denote the set of unbounded $\sigma$-coarsely connected components of $X- C^{+\mu}$, 
\[ C_\alpha =C_{(\sigma, \mu)} =   \mathcal{C}_\infty(X-C^{+\mu}, \sigma), \]
where $C^{+\mu}$ denote the open $\mu$-neighborhood of $C$.  

\begin{remark}\label{remark:uniqueness:function}
Let $A$ and $B$ sets, let $f\colon A \to B$ be a function. Suppose $\mathcal{A}$ and $\mathcal{B}$ are collections of subsets of $A$ and $B$ respectively. If any pair of distinct elements of  $\mathcal{B}$ are disjoint, then there is at most one function $g \colon \mathcal{A} \to \mathcal{B}$ with the property that $f(C)\subseteq g(C)$ for any $C\in \mathcal{A}$.
\end{remark}

\begin{proposition}\label{prop:rho-function}
Let $\alpha,\beta\in I$ and suppose that  $\alpha\preceq \beta$. Then there is a unique function 
\[ \rho_{\alpha,\beta} \colon C_\alpha \to C_\beta \]
that satisfies that $A\subseteq \rho_{\alpha,\beta}(A)$ for any $A\in C_\alpha$. In particular, if $\alpha\preceq \beta \preceq \gamma$ then \[\rho_{\beta, \gamma} \circ \rho_{\alpha,\beta} = \rho_{\alpha, \gamma}.\]
\end{proposition}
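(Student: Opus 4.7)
The plan is to construct $\rho_{\alpha,\beta}$ directly by sending each component $A\in C_\alpha$ to the unique $\sigma_2$-coarsely connected component of $X-C^{+\mu_2}$ that contains it, verify it is well-defined and unbounded, and then appeal to \cref{remark:uniqueness:function} for uniqueness and functoriality of the composition.

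Write $\alpha=(\sigma_1,\mu_1)$ and $\beta=(\sigma_2,\mu_2)$, so the hypothesis $\alpha \preceq \beta$ means $\sigma_1 \leq \sigma_2$ and $\mu_2 \leq \mu_1$. The inclusion $C^{+\mu_2} \subseteq C^{+\mu_1}$ gives $X-C^{+\mu_1} \subseteq X-C^{+\mu_2}$. First I would observe that any $\sigma_1$-quasi-path is automatically a $\sigma_2$-quasi-path, so every $\sigma_1$-coarsely connected subset of $X-C^{+\mu_1}$ is a $\sigma_2$-coarsely connected subset of $X-C^{+\mu_2}$. In particular, for each $A \in C_\alpha$ there is a unique $B \in \mathcal{C}(X-C^{+\mu_2},\sigma_2)$ with $A\subseteq B$ (uniqueness because distinct coarsely connected components are disjoint). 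Since $A$ is unbounded so is $B$, hence $B \in C_\beta$. Define $\rho_{\alpha,\beta}(A) := B$.

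Uniqueness of $\rho_{\alpha,\beta}$ follows from \cref{remark:uniqueness:function}: the elements of $C_\beta$ are pairwise disjoint (as coarsely connected components of a common set), and any function $g\colon C_\alpha \to C_\beta$ satisfying $A \subseteq g(A)$ must therefore agree with the assignment above.

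For the composition identity, given $\alpha \preceq \beta \preceq \gamma$ and $A \in C_\alpha$, the chain of inclusions
\[ A \;\subseteq\; \rho_{\alpha,\beta}(A) \;\subseteq\; \rho_{\beta,\gamma}\bigl(\rho_{\alpha,\beta}(A)\bigr)\]
shows that the composite $\rho_{\beta,\gamma} \circ \rho_{\alpha,\beta}$ satisfies the defining containment property of a function $C_\alpha \to C_\gamma$. By the uniqueness clause, it must equal $\rho_{\alpha,\gamma}$. I do not anticipate any serious obstacle: the whole statement is a bookkeeping exercise in coarsely connected components, and the key fact to keep in sight is simply that enlarging $\sigma$ and shrinking $\mu$ both weaken the separation conditions, so components can only merge, not split.
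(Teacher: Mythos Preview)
Your proof is correct and uses essentially the same ingredients as the paper: the key observation that enlarging $\sigma$ and shrinking $\mu$ only merges components, together with \cref{remark:uniqueness:function} for uniqueness and the composition identity. The only difference is organizational: the paper handles the two coordinates separately (first $\alpha_2=\beta_2$, then $\alpha_1=\beta_1$) and then composes via an intermediate point, whereas you change both coordinates in a single step; your direct route is slightly cleaner.
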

\begin{proof}
The uniqueness of the function is clear since $C_\beta$ is a collection of disjoint subsets of $X$, see Remark~\ref{remark:uniqueness:function}. To define the function we consider cases:
\begin{enumerate}
\item If $\alpha_2=\beta_2$, then $\rho_{\alpha,\beta}$ is a particular case of~\eqref{eq:IncreasingSigma}.  
\item If $\alpha_1=\beta_1$ then $\rho_{\alpha, \beta}$ is the function induced by the inclusion $X-C^{+\alpha_2} \to X-C^{+\beta_2}$. By Proposition~\ref{prop:induced:function:C},  we have that $A\subseteq \rho_{\alpha,\beta}(A)$ for any $A\in C_\alpha$. 
\item Suppose $\alpha_1<\beta_1$ and $\alpha_2>\beta_2$. Let $\gamma =(\alpha_1, \beta_2)$ and $\delta=(\beta_1, \alpha_2)$. 
Then $\alpha\preceq \gamma \preceq \beta$ and $\alpha\preceq \delta \preceq \beta$. Observe that $\rho_{\gamma, \beta} \circ \rho_{\alpha,\gamma}$ and $\rho_{\delta, \beta} \circ \rho_{\alpha,\delta}$ are functions from $C_\alpha$ to $C_\beta$ with the required property. By uniqueness, both functions are equal and they define $\rho_{\alpha,\beta}$.  \qedhere
\end{enumerate}
\end{proof}

\begin{definition}[Filtered Ends at scale $\sigma$]\label{def:FilteredEnds}
Let $X$ be a metric space,   $C$ be a subset of $X$, and $\sigma\geq 0$. The direct system $\langle C_i, \rho_{i,j} \colon  i,j \in J_{\sigma} \rangle$ is called the \emph{$\sigma$-ends-system for $(X,C,\sigma)$}, and the inverse limit  is denoted as  $(\Ends(X,C,\sigma), \psi_i)$; in symbols
\[ \Ends(X,C,\sigma) = \varprojlim_\mu \mathcal C_\infty(X-C^{+\mu}, \sigma) =\varprojlim_\mu C_{(\sigma,\mu)}=\varprojlim_{i\in J_\sigma} C_i.\]
 \end{definition}

For $\alpha\in I$, let
\[\Ends(X,C, J_\alpha) = \varprojlim_{i\in J_\alpha} C_i  \]
the inverse limit of the direct system $\langle C_i, \rho_{i,j} \colon  i,j \in J_\alpha \rangle$.

\begin{remark}\label{rem:EquivalentScales}
Let $\sigma>0$ and $r>0$. Then there is a canonical bijection
\[ \mathsf{Id}\colon \Ends(X,C,\sigma) \to \Ends(X,C^{+r},\sigma).\]
Indeed, for any $i,j \in J_\sigma$, if $i \preceq j$ then $J_i$ is a coinitial subset of $J_j$ and hence there is a natural bijection $\tau_{i,j}\colon \Ends(X,C,J_i) \to \Ends(X,C,J_j)$.  Moreover, if  $i,j,k\in J_\sigma$ and $i\preceq j \preceq k$, then $\tau_{j,k} \circ \tau_{i,j} = \tau_{i,k}$. Hence all the sets $\Ends(X,C,J_i)$ for $i\in J_\sigma$ can be naturally identified. 
\end{remark}

\begin{example} The following examples illustrate that for reasonably well-behaved metric spaces, the number of filtered ends stabilizes for large scales. See \cref{prop:injectivity:directed:system} for a general result for geodesic spaces.  
\begin{enumerate}
    \item Let $X$ be the Euclidean plane and let $C$ be an infinite line. Then $\Ends(X,C,\sigma)$ is a set of cardinality two if and only if $\sigma>0$. Note that $\Ends(X,C,0)$ is empty.
    \item Let $X$ be the set of pairs of integers with the Manhattan distance, that is, the distance between $(x_1,y_1)$ and $(x_2,y_2)$  is $|x_1-x_2|+|y_1-y_2|$. Let $C$ be the points of the form $(n,0)$ with $n\in \Z$. Then $\Ends(X,C,\sigma)$ has cardinality two  if $\sigma\geq 1$, and $\Ends(X,C,\sigma)$ is empty otherwise.
   \item Let $X$ be the subspace of the Euclidean plane consisting of two vertical lines at distance two, and two horizontal lines at distance one. 
   Let $C$ consists of a single point. Then $\Ends(X,C,\sigma)$ has cardinality $8$ if $\sigma<1$, $6$ if $1\leq \sigma <2$, and $4$ if $\sigma\geq 2$.     
\end{enumerate}
\end{example}

\begin{corollary} \label{cor:TransitionFunctions}
For any pair of non-negative real numbers $\sigma\leq \sigma'$, the inclusion of $I_{\sigma'}$ into $I_\sigma$ induces a canonical function \[\varphi_{\sigma,\sigma'} \colon  \Ends(X,C,\sigma) \to \Ends(X,C,\sigma').\]
In particular, if $\sigma\leq \sigma'\leq \sigma''$ then \[\varphi_{\sigma, \sigma''} = \varphi_{\sigma', \sigma''}\circ \varphi_{\sigma,\sigma'}.\]
\end{corollary}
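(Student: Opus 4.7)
The plan is to build $\varphi_{\sigma,\sigma'}$ componentwise and then lift it to the inverse limits via their universal property. The geometric input is that $\sigma$-coarse connectedness is stronger than $\sigma'$-coarse connectedness when $\sigma\leq\sigma'$: every $\sigma$-coarsely connected subset of $X$ is automatically $\sigma'$-coarsely connected. Consequently, for each $\mu \geq 0$, each unbounded $\sigma$-component of $X-C^{+\mu}$ is contained in a unique $\sigma'$-component of $X-C^{+\mu}$, where uniqueness follows from \cref{remark:uniqueness:function} applied to the partition of $X-C^{+\mu}$ into its disjoint $\sigma'$-components. Moreover, the containing $\sigma'$-component is itself unbounded because it contains an unbounded set.

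First, I would define $\iota_\mu \colon C_{(\sigma,\mu)}\to C_{(\sigma',\mu)}$ by sending an unbounded $\sigma$-component $A$ of $X-C^{+\mu}$ to the unique $\sigma'$-component containing $A$. To show that $\{\iota_\mu\}_{\mu\geq 0}$ is a natural transformation with respect to the transition maps of \cref{prop:rho-function}, fix $\mu\leq\mu'$, pick $A \in C_{(\sigma,\mu')}$, and check that
\[
\iota_\mu\bigl(\rho_{(\sigma,\mu'),(\sigma,\mu)}(A)\bigr) \quad \text{and} \quad \rho_{(\sigma',\mu'),(\sigma',\mu)}\bigl(\iota_{\mu'}(A)\bigr)
\]
both equal the unique $\sigma'$-component of $X-C^{+\mu}$ containing $A$. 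This again reduces to \cref{remark:uniqueness:function}, since both compositions are $\sigma'$-components of the same partition that contain $A$. Hence the naturality square commutes.

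The universal property of the inverse limit then produces the map $\varphi_{\sigma,\sigma'}\colon \Ends(X,C,\sigma)\to \Ends(X,C,\sigma')$: a compatible family $(A_{(\sigma,\mu)})_{\mu\geq 0}$ is sent to $(\iota_\mu(A_{(\sigma,\mu)}))_{\mu\geq 0}$, and the naturality square ensures compatibility with the transition maps in the $\sigma'$-system. The composition identity $\varphi_{\sigma,\sigma''} = \varphi_{\sigma',\sigma''}\circ\varphi_{\sigma,\sigma'}$ is immediate from functoriality: for any unbounded $\sigma$-coarsely connected set $A$, the unique $\sigma''$-component of $X-C^{+\mu}$ containing $A$ coincides with the unique $\sigma''$-component containing the unique $\sigma'$-component containing $A$. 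I expect no genuine obstacle; the only recurring technical ingredient is the uniqueness of the enclosing component provided by \cref{remark:uniqueness:function}, used to identify the various candidate components at each stage.
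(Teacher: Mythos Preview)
Your proposal is correct and follows essentially the same route as the paper. The only cosmetic difference is that the paper recognizes your map $\iota_\mu$ as the already-constructed $\rho_{(\sigma,\mu),(\sigma',\mu)}$ from \cref{prop:rho-function}, so that your naturality square becomes the square of $\rho$-maps
\[
\begin{tikzcd}
 C_{(\sigma,\mu')} \arrow[r, "\rho"] \arrow[d,"\rho"] & C_{(\sigma,\mu)} \arrow[d,"\rho"]    \\
C_{(\sigma',\mu')}   \arrow[r, "\rho"] &     C_{(\sigma',\mu)}
\end{tikzcd}
\]
whose commutativity is then immediate from the composition identity $\rho_{\beta,\gamma}\circ\rho_{\alpha,\beta}=\rho_{\alpha,\gamma}$ in \cref{prop:rho-function}, rather than requiring a separate appeal to \cref{remark:uniqueness:function}; but of course that composition identity was itself proved by exactly the uniqueness argument you give, so the content is identical.
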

\begin{proof}
Note that there is an isomorphism of directed systems $J_\sigma \to J_{\sigma'}$ given by $i=(\sigma,y)\mapsto j_*=(\sigma',y)$. 
In particular, if $i,j\in J_\sigma$ and $i \preceq j$, then there is a commutative diagram
\[
\begin{tikzcd}
 C_i \arrow[r, "\rho_{i,j}"] \arrow[d,"\rho_{i,i_*}"] & C_j \arrow[d,"\rho_{j,j_*}"]    \\
C_{i_*}   \arrow[r, "\rho_{i_*,j_*}"] &     C_{j_*}    \end{tikzcd}
\]
which, via the universal property of inverse limits, induces the morphism $\varphi_{\sigma,\sigma'}$.  
The first statement follows directly from~\cref{prop:rho-function}.
 The uniqueness in the universal properties implies that 
$\varphi_{\sigma, \sigma''} = \varphi_{\sigma', \sigma''}\circ \varphi_{\sigma,\sigma'}$ if $\sigma\leq\sigma'\leq\sigma''$. 
\end{proof}

\begin{definition}
The \emph{space of filtered ends $\Ends(X,C)$ of the pair $(X,C)$} is defined as the direct limit 
\[ \Ends(X,C) =  \varinjlim_{\sigma} \Ends(X,C,\sigma)  \]
 of  the system $\langle \Ends(X,C,\sigma), \varphi_{\sigma, \sigma'} \colon \sigma,\sigma'\in [0,\infty) \rangle$. This system is called the \emph{direct ends-system for $(X,C)$}. The \emph{number of filtered ends $\tilde e(X,C)$ of the pair $(X,C)$} is defined as the cardinality of $\Ends(X,C)$.    
\end{definition}

\subsection{Filtered ends in geodesic spaces}

In this part, we prove two results. First, that for geodesic metric spaces the number of filtered ends does not depend on the scale, \cref{prop:injectivity:directed:system}. We expect this result to hold for a broader class of metric spaces but a more general statement would not be discussed in this note.  

\begin{proposition}\label{prop:injectivity:directed:system}
Let $X$ be a geodesic metric space, and let $C$ be a subset of $X$.  For any $0< \sigma\leq \sigma'$, the function $\varphi_{\sigma,\sigma'}\colon  \Ends(X,C,\sigma) \to \Ends(X,C,\sigma')$ is a bijection. In particular, the induced function $\Ends(X,C,\sigma) \to \Ends(X,C)$ is a bijection.
\end{proposition}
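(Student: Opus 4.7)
The plan is to produce an explicit inverse $\psi$ to $\varphi_{\sigma,\sigma'}$, constructed by shifting the $\mu$-parameter upward and then filling each jump of a $\sigma'$-quasi-path with a geodesic subdivided into steps of size $\sigma$. The key preliminary lemma to establish is: if $A \subseteq X - C^{+(\mu+\sigma')}$ is $\sigma'$-coarsely connected, then $A$ is $\sigma$-coarsely connected as a subset of $X - C^{+\mu}$. For the proof, given consecutive points $x_i,x_{i+1} \in A$ with $\dist(x_i,x_{i+1}) \leq \sigma'$, subdivide the geodesic segment between them into pieces of length at most $\sigma$ (using $\sigma > 0$); any intermediate point $z$ satisfies $\min(\dist(z,x_i),\dist(z,x_{i+1})) \leq \sigma'/2$, so $\dist(z,C) \geq (\mu+\sigma') - \sigma'/2 \geq \mu$, and hence $z \in X - C^{+\mu}$.

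Using this lemma, for each $\mu \geq 0$ I would define $\iota_\mu \colon C_{(\sigma',\mu+\sigma')} \to C_{(\sigma,\mu)}$ sending an unbounded $\sigma'$-component $A$ to the unique $\sigma$-component of $X - C^{+\mu}$ containing it (unique by \cref{remark:uniqueness:function}, unbounded because $A$ is). Then define $\psi \colon \Ends(X,C,\sigma') \to \Ends(X,C,\sigma)$ on a compatible family by $\psi((g_\mu)_\mu) = (\iota_\mu(g_{\mu+\sigma'}))_\mu$. Compatibility of this family under the transition maps follows because for $\mu_1 \leq \mu_2$, the $\sigma$-component $\iota_{\mu_2}(g_{\mu_2+\sigma'})$ is $\sigma$-coarsely connected in $X - C^{+\mu_1}$ and contains $g_{\mu_1+\sigma'}$, so it lies inside $\iota_{\mu_1}(g_{\mu_1+\sigma'})$, which is exactly $\rho_{(\sigma,\mu_2),(\sigma,\mu_1)}(\iota_{\mu_2}(g_{\mu_2+\sigma'})) = \iota_{\mu_1}(g_{\mu_1+\sigma'})$.

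The last step is to check $\varphi_{\sigma,\sigma'}$ and $\psi$ are mutually inverse by unwinding definitions. At index $\mu$, $(\varphi_{\sigma,\sigma'}\circ\psi)((g_\mu)_\mu)$ is the $\sigma'$-component of $X - C^{+\mu}$ containing $\iota_\mu(g_{\mu+\sigma'}) \supseteq g_{\mu+\sigma'}$, namely $\rho_{(\sigma',\mu+\sigma'),(\sigma',\mu)}(g_{\mu+\sigma'}) = g_\mu$; conversely, $(\psi \circ \varphi_{\sigma,\sigma'})((e_\mu)_\mu)$ at index $\mu$ is the $\sigma$-component of $X - C^{+\mu}$ containing $g_{\mu+\sigma'}$ (the $\sigma'$-component of $X-C^{+(\mu+\sigma')}$ containing $e_{\mu+\sigma'}$), and since $e_{\mu+\sigma'} \subseteq g_{\mu+\sigma'}$ and $e_{\mu+\sigma'} \subseteq e_\mu$, the $\sigma$-coarse connectedness of $g_{\mu+\sigma'}$ in $X - C^{+\mu}$ forces this component to be $e_\mu$. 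For the final ``in particular'' claim, bijectivity of every transition map in the direct system $\langle \Ends(X,C,\sigma), \varphi_{\sigma,\sigma'}\rangle$ implies formally that each canonical map $\Ends(X,C,\sigma) \to \Ends(X,C)$ into the direct limit is a bijection.

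The main obstacle, or rather the heart of the argument, is the geodesic refinement lemma: everything else is formal bookkeeping with inverse limits. The calibration $\mu \mapsto \mu + \sigma'$ is what pins down the argument and is precisely what requires geodesicity; the example in \cref{rem:rho-surjective} illustrates how this kind of refinement can fail outside the geodesic setting.
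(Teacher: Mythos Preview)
Your argument is correct and rests on the same geodesic refinement idea as the paper---subdividing a $\sigma'$-step into a $\sigma$-quasi-path that stays in $X-C^{+\mu}$ once you have shifted $\mu$ by $\sigma'$---but the packaging is genuinely different. The paper proves injectivity and surjectivity of $\varphi_{\sigma,\sigma'}$ separately: injectivity by exactly your refinement argument, and surjectivity by appealing to \cref{lemma:infinite:sigma:paths} on $C$-proper $\sigma$-rays, refining a $\sigma'$-ray to a $\sigma$-ray. You instead build an explicit inverse $\psi$ at the level of inverse limits via the maps $\iota_\mu\colon C_{(\sigma',\mu+\sigma')}\to C_{(\sigma,\mu)}$, which makes the proof self-contained and bypasses the ray machinery entirely. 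The trade-off is that the paper's route introduces \cref{lemma:infinite:sigma:paths} as a reusable tool (it is invoked again for \cref{prop:EndsEquivalence}), whereas your approach is leaner but one-shot.

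Two small points of phrasing. First, your key lemma should say that $A$ lies in a single $\sigma$-component of $X-C^{+\mu}$, not that $A$ is itself $\sigma$-coarsely connected (the intermediate geodesic points are generally not in $A$); your subsequent use of the lemma shows you mean the former. Second, in the compatibility check you write that $\iota_{\mu_2}(g_{\mu_2+\sigma'})$ ``contains $g_{\mu_1+\sigma'}$'', which is not what you need or what holds; the correct statement is that it contains $g_{\mu_2+\sigma'}\subseteq g_{\mu_1+\sigma'}\subseteq \iota_{\mu_1}(g_{\mu_1+\sigma'})$, hence \emph{intersects} $\iota_{\mu_1}(g_{\mu_1+\sigma'})$, and $\sigma$-coarse connectedness in $X-C^{+\mu_1}$ then forces the inclusion. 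With these tweaks the argument is clean.
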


The second result is that, for geodesic proper metrics spaces, if $C$ is compact then  $\Ends(X,C)$ is the standard set of ends of $X$. An analogous result in the development of filtered by Geoghegan can be found in \cite[Propositions~13.4.7~and~14.3.1]{Ge08}.

\begin{proposition}\label{prop:EndsEquivalence}
Let $X$ be a proper geodesic  metric space  and let $C$ be a non-empty compact subset of $X$, then there is a natural bijection $\Ends(X,C) \to \mathsf{Ends}(X)$ as defined in~\cite{BH99}. 
\end{proposition}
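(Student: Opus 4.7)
The plan is to reduce both sides of the claimed bijection to a common inverse limit of sets of unbounded connected components of complements of compact subsets, and then match the two systems up to cofinality and a bounded shift in the filtration parameter.

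By \cref{prop:injectivity:directed:system} the canonical map $\Ends(X,C,\sigma)\to \Ends(X,C)$ is a bijection for every $\sigma>0$; I would fix $\sigma=1$ and work with $\Ends(X,C,1)=\varprojlim_\mu \mathcal{C}_\infty(X\setminus C^{+\mu},1)$. On the Bridson--Haefliger side, for a proper geodesic $X$ the ends are realised as $\mathsf{Ends}(X)=\varprojlim_K \pi_0^\infty(X\setminus K)$, with $K$ ranging over compact subsets of $X$ and $\pi_0^\infty$ denoting unbounded path-components. Since $C$ is compact and $X$ is proper, every compact $K\subseteq X$ is contained in some $\overline{C^{+\mu}}$, and each $\overline{C^{+\mu}}$ is itself compact; hence the family $\{\overline{C^{+\mu}}\}_{\mu>0}$ is cofinal in the compact subsets, and
\[ \mathsf{Ends}(X)\ \cong\ \varprojlim_\mu \pi_0^\infty\bigl(X\setminus \overline{C^{+\mu}}\bigr). \]
A second cofinality (replacing $\overline{C^{+\mu}}$ by $C^{+(\mu+\epsilon)}$) lets me use the open neighbourhoods $C^{+\mu}$ instead.

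It then remains to compare $\pi_0^\infty(X\setminus C^{+\mu})$ with $\mathcal{C}_\infty(X\setminus C^{+\mu},1)$. In one direction, two points in the same path-component of the open set $X\setminus C^{+\mu}$ are joined by a continuous path in that set; sampling the path at spacing $1$ produces a $1$-quasi-path, so the points lie in the same $1$-coarsely connected component. In the other direction, given a $1$-quasi-path $x_0,\dots,x_n$ in $X\setminus C^{+\mu}$, I would fill it in by geodesic segments of length at most $1$; each such segment lies in the closed ball of radius $1$ about one of its endpoints, hence in $X\setminus C^{+(\mu-1)}$ whenever $\mu>1$. Consequently each $1$-coarsely connected component of $X\setminus C^{+\mu}$ is contained in a unique path-component of $X\setminus C^{+(\mu-1)}$. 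These two observations supply maps between the inverse systems indexed by $\mu$ that are mutually inverse modulo a bounded index shift, and therefore become mutually inverse on passing to the inverse limit. The resulting bijection $\Ends(X,C)\to \mathsf{Ends}(X)$ is natural because no basepoint has been chosen in the construction.

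The main technical point I expect to need care is verifying that unboundedness is preserved under these comparisons: an unbounded $1$-coarsely connected component of $X\setminus C^{+\mu}$ must give rise to an unbounded path-component of $X\setminus C^{+(\mu-1)}$, and symmetrically. This should follow from properness (bounded equals relatively compact) together with the fact that the filling-in procedure inflates a set by only a bounded amount, so boundedness on one side forces boundedness on the other. Once this is in hand, a routine diagram chase confirms that the comparison maps commute with the structure morphisms of both inverse systems, completing the proof.
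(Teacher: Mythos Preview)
Your argument is correct, but it takes a genuinely different route from the paper. The paper does not compare the two inverse systems of components directly; instead it goes through the ray description: it invokes \cref{lemma:infinite:sigma:paths} to represent elements of $\Ends(X,C,1)$ by $C$-proper $1$-rays, fills each such ray in by geodesic segments to obtain a proper ray $[0,\infty)\to X$, and then quotes \cite[I.8, Lemma~8.29]{BH99} to identify two proper rays in $\mathsf{Ends}(X)$ exactly when they can be joined by a $\sigma$-path outside every $C^{+\mu}$, which is precisely the relation from \cref{lemma:infinite:sigma:paths}(2). Your approach instead recasts $\mathsf{Ends}(X)$ as $\varprojlim_K \pi_0^\infty(X\setminus K)$, uses properness and compactness of $C$ to replace $K$ by the cofinal family $\{C^{+\mu}\}$, and then matches path-components with $1$-coarse components via geodesic filling plus a bounded shift $\mu\mapsto \mu-1$. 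Both are valid; the paper's version is shorter because the ray lemma has already been set up for \cref{prop:injectivity:directed:system}, while yours is more self-contained and avoids rays entirely at the cost of a small extra cofinality/shift bookkeeping step. One point to make explicit in your write-up: the identification $\mathsf{Ends}(X)\cong \varprojlim_K \pi_0^\infty(X\setminus K)$ is not the definition in \cite{BH99} but an equivalent description for proper geodesic spaces, so you should either cite that equivalence or briefly justify it.
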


The proofs of    Propositions~\ref{prop:injectivity:directed:system} and~\ref{prop:EndsEquivalence} rely on the following definition and lemma. 

\begin{definition}[The set of $C$-proper infinite $\sigma$-rays $\mathcal P(X,C,\sigma)$]
An \emph{infinite $\sigma$-path $p$ in $X$} is an infinite sequence $x_0,x_1,\dots$ of points in $X$ such that $\dist(x_i,x_{i+1})\leq \sigma$ for all $i$. For such a path $p$, if $x_0\in C$ we say that $p$ starts at $C$; and we say that $p$ is \emph{$C$-proper $\sigma$-ray} if $\{n\colon x_n\in C^{+\mu} \}$ is finite for every $\mu\geq 0$. The set of all $C$-proper infinite rays $\sigma$-paths starting at $C$ is denoted as $\mathcal P(X,C,\sigma)$.
\end{definition}

\begin{lemma}\label{lemma:infinite:sigma:paths}
Let $\sigma\geq 0$. Then there is a surjective function $\psi_\sigma\colon\mathcal P(X,C,\sigma)\to \Ends(X,C,\sigma)$ with the following properties:
\begin{enumerate}
    \item If $\sigma'\geq \sigma$, then the following diagram commutes
    \[
\begin{tikzcd}
 \mathcal P(X,C,\sigma) \arrow[r,hook] \arrow[d,"\psi_\sigma"] & \mathcal P(X,C,\sigma') \arrow[d,"\psi_{\sigma'}"]    \\
\Ends(X,C,\sigma)   \arrow[r,"\varphi_{\sigma,\sigma'}"] &     \Ends(X,C,\sigma')    \end{tikzcd}
\]

\item Suppose $p,q\in \mathcal P(X,C,\sigma)$ and for every $\mu\geq 0$, there are points $x$ of $p$ and $y$ of $q$ that are connected by a $\sigma$-path in $X-C^{+\mu}$. Then  $\psi_\sigma(p)=\psi_\sigma(q)$.
\end{enumerate}
\end{lemma}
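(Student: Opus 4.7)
The plan is to define $\psi_\sigma$ by sending a $C$-proper $\sigma$-ray $p = (x_0, x_1, \ldots)$ to the compatible family of components cut out by its tail. For each $\mu \geq 0$ the $C$-proper condition gives an index $N$ with $x_n \notin C^{+\mu}$ for all $n \geq N$, so the tail is an infinite $\sigma$-path lying inside $X - C^{+\mu}$. Since $\dist(x_n, C) \to \infty$ and $C$ is nonempty, the tail is unbounded in $X$, so the unique $\sigma$-coarsely connected component $A_{(\sigma,\mu)}$ of $X - C^{+\mu}$ containing it is an element of $C_{(\sigma,\mu)}$. The choice of $N$ is irrelevant because any two admissible tails share cofinitely many points. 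When $\mu_1 \geq \mu_2$, the set $A_{(\sigma,\mu_1)}$ is a $\sigma$-coarsely connected subset of $X - C^{+\mu_2}$ that shares points with $A_{(\sigma,\mu_2)}$, so $A_{(\sigma,\mu_1)} \subseteq A_{(\sigma,\mu_2)}$ and $\rho_{(\sigma,\mu_1),(\sigma,\mu_2)}(A_{(\sigma,\mu_1)}) = A_{(\sigma,\mu_2)}$; this verifies compatibility and produces a well-defined element $\psi_\sigma(p) \in \Ends(X,C,\sigma)$.

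Properties (1) and (2) should follow quickly from this description. For (1), a $\sigma$-coarsely connected set is also $\sigma'$-coarsely connected whenever $\sigma' \geq \sigma$, so the $\sigma$-component containing the tail of $p$ inside $X - C^{+\mu}$ is contained in the $\sigma'$-component containing the same tail, which is precisely what the transition function $\varphi_{\sigma, \sigma'}$ of Corollary~\ref{cor:TransitionFunctions} records. For (2), fix $\mu$ and choose the points $x$ of $p$ and $y$ of $q$ in the tails of $p$ and $q$, so that they already lie in $\psi_\sigma(p)_\mu$ and $\psi_\sigma(q)_\mu$ respectively; the hypothesized $\sigma$-path joining them inside $X - C^{+\mu}$ then forces both components to coincide, and doing this for every $\mu$ gives $\psi_\sigma(p) = \psi_\sigma(q)$.

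For surjectivity, I would use a telescoping construction. Given $\{A_{(\sigma,\mu)}\}$ representing an element of $\Ends(X,C,\sigma)$, the compatibility relations yield an inclusion chain $A_{(\sigma,0)} \supseteq A_{(\sigma,1)} \supseteq A_{(\sigma,2)} \supseteq \cdots$ as subsets of $X$. Picking $y_k \in A_{(\sigma,k)}$ for each $k \geq 0$, both $y_k$ and $y_{k+1}$ lie in the $\sigma$-coarsely connected set $A_{(\sigma,k)} \subseteq X - C^{+k}$ and are therefore joined by a $\sigma$-quasi-path inside $X - C^{+k}$. Concatenating these quasi-paths gives an infinite tail which is automatically $C$-proper, since every point past the segment from $y_{k-1}$ to $y_k$ has distance at least $k-1$ from $C$. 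Prepending a $\sigma$-quasi-path from a base point $x_0 \in C$ to $y_0$ yields the desired ray in $\mathcal P(X,C,\sigma)$; by construction the tail past $y_k$ lies in $A_{(\sigma,k)}$, so $\psi_\sigma$ of this ray recovers $\{A_{(\sigma,\mu)}\}$.

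The principal obstacle, and the step most likely to require an extra hypothesis, is the initial $\sigma$-quasi-path from $x_0 \in C$ to $y_0 \in A_{(\sigma,0)}$: in a general metric space, $x_0$ and $y_0$ need not be $\sigma$-coarsely connected, so this segment might not exist. In the intended applications (Propositions~\ref{prop:injectivity:directed:system} and~\ref{prop:EndsEquivalence}) $X$ is geodesic, so any two points can be joined by a $\sigma$-quasi-path obtained by discretizing a geodesic, and the construction proceeds unobstructed.
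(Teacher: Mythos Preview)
Your definition of $\psi_\sigma$ and your verification of properties (1) and (2) are exactly the paper's approach; the paper's own proof is in fact briefer, defining $\psi_\sigma(p)$ as the compatible family $(A_\mu)$ of components containing the tail of $p$ and then declaring that (1) and (2) ``follow directly from the definition.''

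The more interesting point is surjectivity: the paper's proof does not address it at all, and your telescoping construction is the natural argument. Your diagnosis of the obstacle is also correct and applies to the lemma as stated: without assuming $X$ is $\sigma$-coarsely connected, the initial segment from a point $x_0\in C$ to $y_0$ need not exist (indeed $\mathcal P(X,C,\sigma)$ can be empty while $\Ends(X,C,\sigma)$ is not), so surjectivity genuinely requires an extra hypothesis. Since the lemma is only invoked in the proofs of Propositions~\ref{prop:injectivity:directed:system} and~\ref{prop:EndsEquivalence}, where $X$ is geodesic, the gap does not propagate; but you have spotted that the lemma is stated slightly more generally than it is (or can be) proved.

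One minor remark on your argument for (2): the hypothesis hands you specific points $x,y$ for each $\mu$, not points of your choosing in the tails. To make your argument go through, apply the hypothesis at some $\mu''\geq \mu$ exceeding $\dist(x_i,C)$ for every index $i$ below the $\mu$-tail threshold of $p$ (and likewise for $q$); the resulting $x$ and $y$ are then forced into the respective $\mu$-tails, and the connecting $\sigma$-path lies in $X-C^{+\mu''}\subseteq X-C^{+\mu}$.
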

\begin{proof}
First we define the function  $\psi_\sigma\colon\mathcal P(X,C,\sigma)\to \Ends(X,C,\sigma)$.
Let $p$ be an element of $\mathcal P(X,C,\sigma)$, notice that for any $\mu\geq 0$, all but finitely many elements of $p$ belong to a unique element $A_{\mu}$ of $\mathcal{C}_\infty(X-C^{+\mu},\sigma)$. Moreover, if $\mu' \geq \mu$, then $A_{\mu' }\subseteq A_\mu$. It follows that the collection $(A_\mu)$, and in particular $p$, determines a unique element of $\Ends(X,C,\sigma)$ that we define as 
$\psi_\sigma(p)$. The two statements of the lemma now follow directly from the definition of $\psi_\sigma$ for $\sigma>0$.
\end{proof}

\begin{proof}[Proof of \cref{prop:injectivity:directed:system}]
If $X$ is bounded, then there is nothing to prove. Assume that $X$  is unbounded, and observe that $\Ends(X,C,\sigma)\neq\emptyset$ for every $\sigma>0$. 

Let us prove that $\varphi_{\sigma,\sigma'}$ is injective. Recall   $\Ends(X,C,\sigma)= \varprojlim_{i\in J_\sigma} C_i$ where  $C_i=\mathcal{C}_\infty (X-C^{+\mu}, \sigma)$ if $i=(\sigma, \mu)$. Let $A=(A_i)$ and $B=(B_i)$ be elements of  $\Ends(X,C,\sigma)$. Suppose that $\varphi_{\sigma,\sigma'}(A)=\varphi_{\sigma,\sigma'}(B)$. Let $k\in J_\sigma$, with $k=(\sigma, \mu)$. Let $\ell=(\sigma, \mu+\sigma')$ and observe that $A_\ell\subseteq A_k$ and analogously $B_\ell \subseteq B_k$. Now, let $m=(\sigma', \mu+\sigma')$ and observe that $A_\ell$ and $B_\ell$ are subsets of $A_m=B_m$; this last equality follows from the assumption $\varphi_{\sigma,\sigma'}(A)=\varphi_{\sigma,\sigma'}(B)$. Let $x\in A_\ell$ and $y\in B_\ell$. Then, since $x,y\in A_m$, there is a $\sigma'$-path $x=x_0,x_1,\dots,x_n=y$ in $X-C^{+\mu+\sigma'}$. By choosing a geodesic between any two consecutive points in the $\sigma'$-path one sees that, there is a polygonal continuous path from $x$ to $y$ in $X-C^{+\mu}$. It follows that there is a $\sigma$-path from $x$ to $y$ in $X-C^{+\mu}$, and therefore $A_k$ and $B_k$ are the same $\sigma$-coarsely connected component of $X-C^{+\mu}$. Since $k\in J_\sigma$ was arbitrary, it follows that $A=B$.

Now lets prove that $\varphi_{\sigma,\sigma'}$ is surjective. By \cref{lemma:infinite:sigma:paths}, it is enough to show that for every $p\in \mathcal P(X,C,\sigma')$ there is a $q\in \mathcal  P(X,C,\sigma)$ such that $\psi_{\sigma'}(p)=\psi_{\sigma'}(q)$. Let $m$ be such that $\sigma'\leq m\sigma$. Let $ p$ be the sequence $x_0,x_1,\dots$ and suppose it is an element of $\mathcal P(X,C,\sigma')$.  Since $X$ is geodesic, there is a finite $\sigma$-path $q_i$ from $x_i$ to $x_{i+1}$ of length at most $m$ for all $i$.  Then the $\sigma$-path $q$ formed by $q_0,q_1\dots$ is an element of $\mathcal  P(X,C,\sigma)$. It is a consequence of the second part of \cref{lemma:infinite:sigma:paths} that $\psi_{\sigma'}(p)=\psi_{\sigma'}(q)$. 
\end{proof}

\begin{proof}[Proof of \cref{prop:EndsEquivalence}]
Let $x_0\in C$. Any $C$-proper coarse ray $(x_i)_{i\in\N}$ induces a proper ray $r\colon [0,\infty)\to X$ such that $r(0)=x_0$ and  $r(i)=x_i$ for each $i\in\Z_+$ by concatenating geodesic segments from $x_i$ to $x_{i+1}$. By \cite[I.8 Lemma 8.29]{BH99}, any two geodesic rays represent the same element of $\mathsf{Ends}(X)$ if for every $\mu>0$  they can be connected by a $\sigma$-path in  $X-C^{+\mu}$. Then~\cref{lemma:infinite:sigma:paths} implies that there is a natural bijection $\Ends(X,C, 1) \to \mathsf{Ends}(X)$. The result follows applying \cref{prop:injectivity:directed:system}.
\end{proof}

\subsection{Equivalence of filtered ends of pairs $\tilde e(G,P)$ with Bowditch's coends}\label{section:filtered:ends:coends}

\begin{definition}
Let $G$ be a finitely generated group and let $P$ be a subgroup. Let  $X$ be a Cayley graph of $G$ with respect to a finite generating set and with the edge path metric.
\begin{enumerate}
    \item \emph{The number of filtered ends} $\tilde e(G,P)$ is defined as the cardinality of $\Ends(X,P)$. Note that this definition is independent of $X$ in view of \cref{cor:qiinvariance:relativeends}.
    
    \item (Bowditch's coends) The number of coends $\bar e(G,P)$ is defined as follows. Let $\mathscr{S}_0(P)$ be the collection of non-empty connected $P$- invariant subgraphs of $X$ with compact quotient under $P$. Given $A\in \mathscr{S}_0(P)$, let $\mathscr{C}_\infty(A)$ be the set of components of $X-A$ that are not contained in any other element of $\mathscr{S}_0(P)$. If $A\subseteq B$ and both are elements of $\mathscr{S}_0(P)$, then there is a surjective map $\mathscr{C}_\infty(B)\to \mathscr{C}_\infty(A)$. The cardinality of the inverse limit of the system $(\mathscr{C}_\infty(A)\colon A\in \mathscr{S}_0(P))$ is \emph{the number of coends of the pair $(G,P)$}.
\end{enumerate}
\end{definition}

\begin{remark}
Observe that  $P^{+\mu}$, the $\mu$-neighborhood of $P$, is an element of $\mathscr{S}_0(P)$. Moreover, $\mathscr{C}_\infty(P^{+\mu})$ coincides with $\mathcal{C}_\infty(X-P^{+\mu}, \sigma)$ for any $\sigma>0$ (see \cref{prop:injectivity:directed:system}). Since $(\mathcal{C}_\infty(X-P^{+\mu}, \sigma)\colon \mu>0)$ is a cofinal system of $(\mathscr{C}_\infty(A)\colon A\in \mathscr{S}_0(P))$, we conclude that the number of filtered ends and the number of coends of $(G,P)$ coincide.
\end{remark}

\subsection{Induced functions}

Let $f\colon X  \to  Y $ be a $(\lambda, \epsilon)$-quasi-isometric map. Let $C\subset X$ and $D\subset Y$ and suppose $D \subseteq f(C)^{+r}$ for some $r\geq0$. 

 For $a\geq0$, let 
\begin{equation} a^f = \lambda a+\epsilon.\end{equation}
 
Let $\alpha=(0,\lambda(\epsilon+r))$ and $\beta=(\epsilon,0)$, consider the order preserving bijective function 
\[ I_\alpha  \to I_\beta,  \qquad  (\sigma,\mu) \mapsto \left(\lambda\sigma+\epsilon, \frac1\lambda\mu-\epsilon-r  \right).\]
For $i \in I_\alpha$, let $i^f$ denote its image by this function.

Let $C_i$ and $D_i$ denote the objects of the $I_\alpha$-ends-system and $I_\beta$-ends-system of  $(X,C)$ and $(Y,D)$ respectively. 

\begin{lemma}\label{lemma:step1}
For each $i=(\sigma,\mu)\in I_\alpha$, there is a unique function $f_i\colon C_i \to D_{i^f}$ that satisfies $f(A)\subseteq f_i(A)$ for any $A\in C_i$. 
\end{lemma}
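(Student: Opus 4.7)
The approach is to show that for any $A\in C_i$ the image $f(A)$ is contained in a unique element of $D_{i^f}=\mathcal{C}_\infty(Y-D^{+(\mu/\lambda-\epsilon-r)},\lambda\sigma+\epsilon)$, and then define $f_i(A)$ to be that element. Uniqueness of $f_i$ is then immediate from \cref{remark:uniqueness:function}, since the elements of $D_{i^f}$ are pairwise disjoint as distinct coarsely connected components of $Y-D^{+(\mu/\lambda-\epsilon-r)}$.

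For existence, three properties of $f(A)$ must be verified. First, $f(A)\cap D^{+(\mu/\lambda-\epsilon-r)}=\emptyset$: for $a\in A$ and $d\in D$, the inclusion $D\subseteq f(C)^{+r}$ yields some $c\in C$ with $\dist(d,f(c))<r$, and the coarse bi-Lipschitz bound coming from $f$ being a $(\lambda,\epsilon)$-quasi-isometric map gives $\dist(f(a),f(c))\geq \frac{1}{\lambda}\dist(a,c)-\epsilon\geq \frac{\mu}{\lambda}-\epsilon$, using $A\subseteq X-C^{+\mu}$; the triangle inequality then delivers $\dist(f(a),d)\geq \frac{\mu}{\lambda}-\epsilon-r$. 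The constraint $i\in I_\alpha$ forces $\mu\geq\lambda(\epsilon+r)$, so the resulting radius is non-negative and $D^{+(\mu/\lambda-\epsilon-r)}$ is well-defined. Second, $f(A)$ is $(\lambda\sigma+\epsilon)$-coarsely connected; this is \cref{prop:induced:function:C} applied to the $(\lambda,\epsilon)$-coarse Lipschitz map $f$ restricted to the $\sigma$-coarsely connected set $A$. Third, $f(A)$ is unbounded, since the coarse lower Lipschitz bound for $f$ sends unbounded sets to unbounded sets and $A\in C_i$ is by definition unbounded.

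The main observation is that the choice of $i^f=(\lambda\sigma+\epsilon,\mu/\lambda-\epsilon-r)$ is engineered precisely to absorb the quasi-isometry constants of $f$ together with the proximity constant $r$ between $D$ and $f(C)$, so that each of the three defining conditions of membership in $C_i$ translates cleanly into the corresponding condition defining $D_{i^f}$. There is no substantial obstacle; the lemma is a compatibility statement, and the hypothesis $i\in I_\alpha$ is exactly the minimal constraint needed to keep the pushed-forward radius non-negative.
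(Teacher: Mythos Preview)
Your proof is correct and follows essentially the same approach as the paper: both show that the restriction $f\colon X-C^{+\mu}\to Y-D^{+(\mu/\lambda-\epsilon-r)}$ is well-defined via the lower quasi-isometry bound and the hypothesis $D\subseteq f(C)^{+r}$, then obtain $f_i$ from \cref{prop:induced:function:C} and deduce uniqueness from \cref{remark:uniqueness:function}. You spell out the three ingredients (complement containment, $(\lambda\sigma+\epsilon)$-coarse connectedness, unboundedness) a bit more explicitly than the paper does, but the argument is the same.
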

\begin{proof}
Since $f$ is a $(\lambda,\epsilon)$-quasi-isometric map,  if $\dist(x,C)\geq \mu$ then $\dist(f(x),f(C))\geq \mu/\lambda-\epsilon\geq r$ since $\mu\geq \lambda(\epsilon+r)$. Since $D \subseteq f(C)^{+r}$, it follows that $\dist(f(x),D)\geq \mu/\lambda-\epsilon-r\geq 0$.  Hence the restriction  $f\colon (X-C^{+\mu}) \to (Y- f(C)^{+(\mu/\lambda -\epsilon-r)})$ is a well-defined $(\lambda,\epsilon)$-coarse Lipschitz map and  Proposition~\ref{prop:induced:function:C} can be invoked to obtain $f_{i }$. Since $D_{i^f}$ is a collection of disjoint subsets of $Y$, the function $f_{i}$  is unique by Remark~\ref{remark:uniqueness:function}. 
\end{proof}

\begin{lemma}\label{lemma:step2}
For any $i,j\in I_\alpha$, if $i\preceq j$ then there is a commutative diagram
\begin{equation}\label{eq:CommSquare012}
\begin{tikzcd}
 C_i \arrow[r, "f_i"  ] \arrow[d] & D_{i^f} \arrow[d]    \\
 C_{j}  \arrow[r, "f_j"] &     D_{j^f}    
\end{tikzcd}
\end{equation}
where the vertical arrows are transition functions of the $\alpha$-ends-system and $\alpha^f$-ends-system of $(X,C)$ and $(Y,D)$ respectively.
\end{lemma}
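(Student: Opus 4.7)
The plan is to deduce commutativity from the uniqueness statement in Remark~\ref{remark:uniqueness:function}, exactly in the style used to define $f_i$ in Lemma~\ref{lemma:step1}. Both $f_j\circ \rho_{i,j}$ and $\rho_{i^f,j^f}\circ f_i$ will be exhibited as functions $C_i\to D_{j^f}$ satisfying the inclusion $f(A)\subseteq g(A)$ for every $A\in C_i$; since the elements of $D_{j^f}$ are pairwise disjoint (they are distinct $\sigma'$-coarsely connected components of the complement of a neighborhood of $D$), the uniqueness remark forces them to agree.

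First I would check the two inclusions. For $A\in C_i$, Proposition~\ref{prop:rho-function} gives $A\subseteq \rho_{i,j}(A)$, hence $f(A)\subseteq f(\rho_{i,j}(A))$; and by the defining property of $f_j$ from Lemma~\ref{lemma:step1} we have $f(\rho_{i,j}(A))\subseteq f_j(\rho_{i,j}(A))$. Composing,
\[
f(A)\subseteq (f_j\circ\rho_{i,j})(A).
\]
On the other hand, Lemma~\ref{lemma:step1} gives $f(A)\subseteq f_i(A)$, and Proposition~\ref{prop:rho-function} applied in $(Y,D)$ gives $f_i(A)\subseteq \rho_{i^f,j^f}(f_i(A))$, so
\[
f(A)\subseteq (\rho_{i^f,j^f}\circ f_i)(A).
\]

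Next I would verify that the hypotheses of Remark~\ref{remark:uniqueness:function} are met: the ambient function is $f\colon X\to Y$, the source family is $\mathcal{A}=C_i$, and the target family is $\mathcal{B}=D_{j^f}$, whose elements are pairwise disjoint because distinct $j^f$-coarsely connected components of $Y-D^{+(j^f)_2}$ do not meet. Therefore there is at most one function $g\colon C_i\to D_{j^f}$ satisfying $f(A)\subseteq g(A)$ for every $A\in C_i$, and the two composites above must coincide.

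I do not expect any real obstacle here: the only thing to be a bit careful about is that both composites genuinely land in $D_{j^f}$ with the correct indices, which amounts to checking that the assignment $i\mapsto i^f$ is order-preserving (immediate from its formula), so that the transition functions $\rho_{i,j}$ and $\rho_{i^f,j^f}$ are defined whenever $i\preceq j$. Once that is in place, the entire argument is just an application of the uniqueness principle that was used to set up $f_i$ in the first place.
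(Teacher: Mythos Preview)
Your proof is correct and follows exactly the same approach as the paper: invoke Remark~\ref{remark:uniqueness:function} using that $D_{j^f}$ is a collection of pairwise disjoint subsets of $Y$, so there is a unique function $\xi\colon C_i\to D_{j^f}$ with $f(A)\subseteq \xi(A)$ for all $A\in C_i$. The paper's proof is a single sentence stating precisely this; you have simply spelled out the verification that both composites satisfy the required inclusion.
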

\begin{proof}
 This is indeed a commutative diagram since $D_{j^f}$ is a collection of pairwise disjoint sets, and therefore there is only function $\xi\colon C_i\to D_{j^f}$ with the property that $f(A)\subseteq \xi (A)$ for any $A\in C_i$, see Remark~\ref{remark:uniqueness:function}.
\end{proof}

  \begin{definition}[Induced fuctions]\label{lemma:step3} \label{lemma:step4}
  Let $f\colon X  \to  Y $ be a $(\lambda, \epsilon)$-quasi-isometric map. Let $C\subset X$ and $D\subset Y$ and suppose $D \subseteq f(C)^{+r}$ for some $r\geq0$.   Let $0<a\leq b$. 
  
      The function $f_{a,b}\colon \Ends(X,C,a) \to \Ends(Y,D,b)$ is defined as the composition 
\[ f_{a,b} = \varphi_{a^f,b} \circ f_{a, a^f} , \]
where $\varphi_{a^f,a}\colon \Ends(Y,D,a^f)\to \Ends(Y,D,b)$ is a transition function provided by Corollary~\ref{cor:TransitionFunctions}. The function $f_{a,a^f}$ is defined as follows. The commutative squares \eqref{eq:CommSquare012} for $i$ and $j$ in $J_{(a, \lambda (\epsilon+r))}$ induce a function
  \[f_{a,a^f}\colon \Ends(X,C,a) \to \Ends(Y,D,a^f)\]
   by   taking inverse limits of the $D_{i^f}$'s and then the inverse limits of the $C_i$'s. Indeed, the inverse limits induce a function
    \[ f_{a, a^f}\colon\Ends(X,C, J_{(a,\lambda(\epsilon+r))}) \to \Ends(Y,D, J_{(a^f,0)}), \]
    and by Remark~\ref{rem:EquivalentScales}, $\Ends(X,C,a) = \Ends(X,C, J_{(a,\lambda(\epsilon+r))})$ and $\Ends(Y,D, J_{(a^f,0)}) =\Ends(Y,D,a^f)$.

\end{definition}

\begin{proposition}\label{lemma:step5}
Let $f\colon X  \to  Y $ be a $(\lambda, \epsilon)$-quasi-isometric map. Let $C\subset X$ and $D\subset Y$ and suppose $D \subseteq f(C)^{+r}$ for some $r\geq0$.

If $a\leq c$, $a^f\leq b$, $c^f\leq d$ and $b\leq d$, then there is a commutative diagram
    \[
\begin{tikzcd}
     \Ends(X,C,a) \arrow[r, "f_{a, b }"] \arrow[d] & \Ends(Y,D,b) \arrow[d]    \\
\Ends(X,C,c)   \arrow[r, "f_{c, d }"] &     \Ends(Y,D,d)    \end{tikzcd}
\]
 where the vertical arrows are transition functions of the direct ends-system of $(X,C)$ and $(Y,D)$ respectively.    
\end{proposition}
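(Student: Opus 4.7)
The plan is to reduce the commutativity to the uniqueness statement of \cref{remark:uniqueness:function} applied at each finite stage of the underlying inverse systems. Unwinding the definitions, each of the two compositions $\varphi_{b,d}\circ f_{a,b}$ and $f_{c,d}\circ \varphi_{a,c}$ is built as a transfinite composition of transition maps $\rho_{i,j}$ within $(X,C)$ and $(Y,D)$, and induced maps $f_i\colon C_i\to D_{i^f}$ coming from \cref{lemma:step1}. The key feature shared by every building block is the \emph{inclusion property}: both the transition maps $\rho_{i,j}$ (by \cref{prop:rho-function}) and the induced maps $f_i$ (by \cref{lemma:step1}) send a component $A$ to the unique larger component that contains $A$ (respectively $f(A)$).

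First I would fix a cofinal subset of indices on which both paths of the diagram can be simultaneously realised. Specifically, for $i=(a,\mu)\in J_{(a,\lambda(\epsilon+r))}$ with $\mu$ large enough that $i^f$ makes sense, and for $j\in J_d$ with $j$ sufficiently far along so that there are well-defined transition maps $C_i\to C_{j^{\prime}}$ in $(X,C)$ and $D_{i^f}\to D_j$ in $(Y,D)$ (where $j'$ denotes the corresponding translate of $j$ back to the $c$-scale), we obtain a diagram
\[
\begin{tikzcd}
 C_i \arrow[r,"f_i"] \arrow[d,"\rho_{i,j'}"] & D_{i^f} \arrow[d,"\rho_{i^f,j}"] \\
 C_{j'} \arrow[r,"f_{j'}"] & D_{j}
\end{tikzcd}
\]
whose commutativity is exactly \cref{lemma:step2} (applied to the pair $i\preceq j'$ in $I_\alpha$ together with the $f$-translate in $I_\beta$). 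Both compositions in this square are functions $C_i\to D_j$ whose value on any $A\in C_i$ is an element of $D_j$ containing $f(A)$; since the elements of $D_j$ are pairwise disjoint, \cref{remark:uniqueness:function} forces the two composites to coincide.

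Next I would assemble these square-level equalities into a statement at the level of inverse limits. The hypotheses $a\leq c$, $a^f\leq b$, $c^f\leq d$, $b\leq d$ guarantee that the assignments $i\mapsto j'$ and $i^f\mapsto j$ realise both horizontal maps of the big square as morphisms of inverse systems indexed by cofinal subsets of, respectively, $J_a$, $J_c$, $J_b$, and $J_d$; the vertical maps are the standard transitions from Corollary~\ref{cor:TransitionFunctions}. By the universal property of inverse limits, a map out of $\Ends(X,C,a)$ to $\Ends(Y,D,d)$ is determined by its projections to each $D_j$, and both $\varphi_{b,d}\circ f_{a,b}$ and $f_{c,d}\circ\varphi_{a,c}$ have identical projections by the square-level commutativity established in the previous paragraph.

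The main obstacle I anticipate is purely bookkeeping: one has to verify that the various index translations $i\mapsto i^f$ and the restrictions of the different cofinal subsets $J_\alpha$ arising in the four definitions $f_{a,b}$, $f_{c,d}$, $\varphi_{a,c}$, $\varphi_{b,d}$ can be lined up in a single commutative diagram of directed sets. Once this is done, the commutativity of the end diagram is formal: every arrow in sight is either a transition map or an induced map, all satisfy the inclusion property, and \cref{remark:uniqueness:function} takes care of uniqueness, so no additional geometric input beyond what is already encapsulated in \cref{lemma:step1,lemma:step2} is required.
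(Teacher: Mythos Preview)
Your proposal is correct and follows essentially the same approach as the paper: both arguments reduce the commutativity of the square of $\Ends$-sets to the finite-stage commutative squares of \cref{lemma:step2} (diagram~\eqref{eq:CommSquare012}) and then pass to inverse limits. The paper's proof is slightly more streamlined in that it first reduces to the special square with horizontal maps $f_{a,a^f}$ and $f_{c,c^f}$ (so that the general case follows by post-composing with transition functions, using the definition $f_{a,b}=\varphi_{a^f,b}\circ f_{a,a^f}$), whereas you work directly with the full diagram at the finite-stage level; but the substance is the same, and your explicit invocation of the ``inclusion property'' together with \cref{remark:uniqueness:function} is exactly the mechanism behind \cref{lemma:step2}.
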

\begin{proof}
It is enough to verify that for $a\leq b$, there is a commutative diagram
    \[
\begin{tikzcd}
     \Ends(X,C,a) \arrow[r, "f_{a, a^f }"] \arrow[d] & \Ends(Y,D,a^f) \arrow[d]    \\
\Ends(X,C,b)   \arrow[r, "f_{b, b^f}"] &     \Ends(Y,D,b^f)    \end{tikzcd}
\]
 where the vertical arrows are transition functions of the direct ends-system of $(X,C)$ and $(Y,D)$ respectively. This follows    \cref{lemma:step3} and the commutative squares~\eqref{eq:CommSquare012}. 
\end{proof}

\begin{proposition}\label{prop:IdentityMorphism}
For the identity map $\mathsf{Id} \colon (X,C) \to (X,C)$ and $0<a\leq b$, the function $\mathsf{Id}_{a,b}\colon \Ends(X,C,a) \to \Ends(X,C,b)$ is $\varphi_{a,b}$.
\end{proposition}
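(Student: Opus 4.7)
The plan is to unwind the definition of $\mathsf{Id}_{a,b}$ and show that at every finite stage of the inverse-limit construction, the induced function is simply the identity, so that the only nontrivial ingredient remaining is the transition function $\varphi_{a,b}$.

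First I would observe that the identity map $\mathsf{Id}\colon X \to X$ is a $(1,0)$-quasi-isometric map, and since $\mathsf{Id}(C) = C \subseteq C^{+0}$ we may take $\lambda = 1$, $\epsilon = 0$, and $r = 0$ in the setup preceding \cref{lemma:step1}. Consequently $a^f = \lambda a + \epsilon = a$ and $i^f = i$ for every $i \in I_\alpha$, and moreover $\lambda(\epsilon + r) = 0$ so that $J_{(a, \lambda(\epsilon + r))} = J_{(a,0)} = J_a$.

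Next, for each $i = (a,\mu) \in J_a$, the induced function $\mathsf{Id}_i\colon C_i \to C_{i^f} = C_i$ is uniquely characterized by the property that $A \subseteq \mathsf{Id}_i(A)$ for every $A \in C_i$ (this is the content of \cref{lemma:step1} combined with the uniqueness in \cref{remark:uniqueness:function}, which applies because the elements of $C_i$ are pairwise disjoint). The identity function on $C_i$ trivially satisfies this property, so by uniqueness $\mathsf{Id}_i$ is the identity on $C_i$. Passing through the inverse limit construction of \cref{lemma:step3}, and using the identification $\Ends(X,C, J_{(a,0)}) = \Ends(X,C,a)$ from \cref{rem:EquivalentScales}, we conclude that $\mathsf{Id}_{a, a^f} = \mathsf{Id}_{a,a}\colon \Ends(X,C,a) \to \Ends(X,C,a)$ is the identity function on $\Ends(X,C,a)$.

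Finally, by the definition of $\mathsf{Id}_{a,b}$ in \cref{lemma:step3},
\[ \mathsf{Id}_{a,b} = \varphi_{a^f, b} \circ \mathsf{Id}_{a, a^f} = \varphi_{a,b} \circ \mathsf{Id}_{\Ends(X,C,a)} = \varphi_{a,b}, \]
which is the required equality. There is essentially no obstacle in this argument; the only step that requires any care is the verification that $\mathsf{Id}_i$ is the identity on $C_i$, which is immediate from the uniqueness clause highlighted in \cref{remark:uniqueness:function}.
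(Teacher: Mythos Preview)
Your proof is correct and follows essentially the same approach as the paper: both arguments choose the trivial quasi-isometry constants so that $i^f=i$, verify that each $\mathsf{Id}_i$ is the identity on $C_i$ via the uniqueness of the inclusion-preserving function, and then unwind the definition $\mathsf{Id}_{a,b}=\varphi_{a^f,b}\circ\mathsf{Id}_{a,a^f}$. The only cosmetic difference is that the paper phrases the key step as $\mathsf{Id}_i=\rho_{i,i}$ via \cref{prop:rho-function}, whereas you invoke \cref{lemma:step1} and \cref{remark:uniqueness:function} directly.
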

\begin{proof}
Note that $\mathsf{Id}_i\colon C_i \to C_i$ satisfies that $A=\mathsf{Id}_i(A)$ for every $A\in C_i$ and hence by \cref{prop:rho-function}, $\mathsf{Id}_i = \rho_{i,i}$ for every $i\in J_a$. Since $\varphi_{a,a}$ and $\mathsf{Id}_{a,a}$ are both  induced by taking the inverse  limits of $\rho_{i,i}\colon C_i \to C_j$, see \cref{lemma:step3} and \cref{cor:TransitionFunctions} respectively, it follows that $  \mathsf{Id}_{a,a}=\varphi_{a,a}$, and therefore $\mathsf{Id}_{a,b}=\varphi_{a,b}$.
\end{proof}

\begin{proposition}\label{prop:UltimaDeVerdad}
Let $f\colon X\to X$  be a function at distance at most  $r$ from the identity. If  $0<a$ and $a^f\leq b$, then $f_{a,b}\colon \Ends(X,C,a) \to \Ends(X,C,b)$ is $\varphi_{a,b}$.
\end{proposition}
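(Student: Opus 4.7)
The plan is to reduce to the identity case using the uniqueness property of the transition functions $\rho_{i,j}$. Since $f$ is at distance at most $r$ from the identity, $f$ is a $(1, 2r)$-quasi-isometric map, and one can use $\lambda = 1$ and $\epsilon = 2r$ in \cref{lemma:step3} together with $D = C$ (so that $C \subseteq f(C)^{+r}$). Then $a^f = a + 2r$, and for $i = (a, \mu) \in J_{(a, 3r)}$ one has $i^f = (a + 2r, \mu - 3r)$.

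The key step is to verify that $f_i = \rho_{i, i^f}$ for every $i \in J_{(a, 3r)}$. For $A \in C_i$, the inequality $\dist(f(x), x) \leq r$ gives $f(A) \subseteq A^{+r}$, and a short computation shows that $A^{+r}$ is an unbounded $(a + 2r)$-coarsely connected subset of $X - C^{+(\mu - 3r)}$: coarse connectedness follows by concatenating $a$-quasi-paths in $A$ with two single jumps of length at most $r$, and membership in $X - C^{+(\mu - 3r)}$ follows from the triangle inequality. Hence $A^{+r}$ is contained in the unique element of $C_{i^f}$ containing $A$, namely $\rho_{i, i^f}(A)$. The uniqueness statement of \cref{remark:uniqueness:function} then forces $f_i(A) = \rho_{i, i^f}(A)$.

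Passing to inverse limits, I would show $f_{a, a^f} = \varphi_{a, a^f}$. Given $\xi = (A_k)_{k \in J_a} \in \Ends(X, C, a)$ and $j = (a^f, \nu) \in J_{a^f}$, the $j$-th projection of $f_{a, a^f}(\xi)$ equals $\rho_{i, j}(A_i)$ for $i = (a, \nu + 3r)$ (chosen so that $i^f = j$), while the $j$-th projection of $\varphi_{a, a^f}(\xi)$ equals $\rho_{i', j}(A_{i'})$ for $i' = (a, \nu)$. By compatibility of $\xi$, $A_i \subseteq A_{i'}$, so both projections are elements of $C_j$ containing $A_i$, and hence agree by uniqueness. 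The composition rule in \cref{cor:TransitionFunctions} then gives
\[ f_{a, b} = \varphi_{a^f, b} \circ f_{a, a^f} = \varphi_{a^f, b} \circ \varphi_{a, a^f} = \varphi_{a, b}. \]
The only mild obstacle is the bookkeeping needed to reconcile the two indexing conventions $i^f$ (for $f$) and $i_*$ (for $\varphi$); once $f_i = \rho_{i, i^f}$ is established, everything reduces to the uniqueness statement of \cref{remark:uniqueness:function}.
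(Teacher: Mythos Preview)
Your proof is correct and follows essentially the same approach as the paper: both view $f$ as a $(1,2r)$-quasi-isometric map, both reduce the claim to showing $f_i=\rho_{i,i^f}$ for $i\in J_{(a,3r)}$, and both finish via the composition $f_{a,b}=\varphi_{a^f,b}\circ f_{a,a^f}$. The only cosmetic difference is the direction of the uniqueness argument---the paper shows $A\subseteq f_i(A)$ and invokes the uniqueness of $\rho_{i,i^f}$ from \cref{prop:rho-function}, whereas you show $f(A)\subseteq \rho_{i,i^f}(A)$ and invoke the uniqueness of $f_i$ from \cref{remark:uniqueness:function}---and your inverse-limit bookkeeping (reconciling the shifts $i^f=(a+2r,\mu-3r)$ versus $i_*=(a^f,\mu)$) is spelled out more carefully than in the paper, which simply asserts that $f_{a,a^f}$ and $\varphi_{a,a^f}$ arise as inverse limits of the same maps.
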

\begin{proof}
It is enough to prove that $f_{a,a^f}\colon \Ends(X,C,a) \to \Ends(X,C,a^f)$ is $\varphi_{a,a^f}$.
Consider $f\colon X\to X$ as a $(1,2r)$-quasi-isometric map. from the identity and let $\sigma > 0$.  Consider the induced functions $f_i \colon C_i \to C_{i^f}$ for $i\in J_\sigma$; here $i\in J_{(0,3r)}$ and if $i=(\sigma,\mu)$ then $i^f=(\sigma+2r, \mu-3r)$. 
 
By definition, the functions  $f_{a, a^f}$ and $\varphi_{a, a^f}$  are obtained as the inverse limits of the functions $f_i\colon C_i \to C_{i^f}$ and $\rho_{i,i^f}$ respectively. To complete the proof we show that $f_i=\rho_{i, i^f}$.  By definition of $f_i$, $f(A) \subset f_i(A)$ for any $A\in C_i$.  
The assumption on $f$ implies that $\Hdist(A, f(A))\leq r$. Since $f_i(A)$ is an unbounded $(\sigma+2r)$-coarsely connected component of $X-C^{\mu-3r}$ and $\Hdist(A, f(A))\leq r$, it follows that $A\subset f_i(A)$. By Proposition 4.5, $f_i=\rho_{i,i^f}$.
\end{proof}

\begin{proposition}\label{lemma:step6}
Let $f\colon X  \to  Y $,  $g\colon Y \to Z$ and $h=g\circ f$ be a quasi-isometric maps with some chosen constants. Let $C\subset X$,  $D\subset Y$ and $E\subset Z$ and suppose $D \subseteq f(C)^{+r}$,  $E \subset g(D)^{+r'}$  and $E\subset h(C)^{+r''}$ for some $r,r',r''\geq0$. 
 
Let $a>0$. Let $b,c$ and $d$ denote $ a^f, a^{g\circ f}, $ and  $(a^f)^g$ respectively. Then, by choosing the quasi-isometric  constants for $h$ appropriately, one can assume that $c\leq d$, and there is a  commutative diagram,
\[
\begin{tikzcd}
 & \Ends(Y,D,b) \arrow[r, "g_{ b,d }"] &   \Ends(Z,E,d)    \\
\Ends(X,C,a) \arrow[ur, "f_{a,b}"] \arrow[rr, "(g\circ f)_{a,c}"] & &    \Ends(Z,E,c) \arrow[u]  \\
&  &     \end{tikzcd}
\]
where the vertical arrow into $\Ends(Z,E,d)$ is a transition function of the direct ends-system for $(Z,E)$. 
\end{proposition}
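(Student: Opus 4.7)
The plan is to reduce the claim, via the universal property of inverse limits, to a diagram chase at the level of the component sets $C_i$, $D_{i^f}$, and $E_{(i^f)^g}$, where uniqueness of the induced maps (\cref{remark:uniqueness:function}) will do most of the work.

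First I would settle the constants. If $f$ is $(\lambda_f,\epsilon_f)$-quasi-isometric and $g$ is $(\lambda_g,\epsilon_g)$-quasi-isometric, then $h=g\circ f$ is $(\lambda_g\lambda_f,\lambda_g\epsilon_f+\epsilon_g)$-quasi-isometric, and with this choice one computes
\[ a^{g\circ f}=\lambda_g\lambda_f a+\lambda_g\epsilon_f+\epsilon_g=(a^f)^g, \]
so $c=d$ and the vertical transition function $\Ends(Z,E,c)\to\Ends(Z,E,d)$ is the identity under \cref{prop:IdentityMorphism}. Since any quasi-isometric map remains quasi-isometric with larger constants, one may also freely enlarge the constants for $h$ so that $c\leq d$ while keeping the same containments $E\subseteq h(C)^{+r''}$; in that case the transition function $\varphi_{c,d}$ from \cref{cor:TransitionFunctions} takes care of the vertical arrow.

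Next, I would work at the level of components. Fix $i$ in the common directed subsystem of $J_a$ on which all three of the maps $f_i\colon C_i\to D_{i^f}$, $g_{i^f}\colon D_{i^f}\to E_{(i^f)^g}$, and $h_i\colon C_i\to E_{i^h}$ are defined by \cref{lemma:step1}. By construction $f(A)\subseteq f_i(A)$ and $g(B)\subseteq g_{i^f}(B)$ for all $A\in C_i$ and $B\in D_{i^f}$, hence
\[ h(A)=g(f(A))\subseteq g(f_i(A))\subseteq g_{i^f}(f_i(A)). \]
Since $E_{(i^f)^g}=E_{i^h}$ (using $c=d$, or after one more transition $\rho$ if $c\leq d$ is strict) is a collection of pairwise disjoint subsets of $Z$, \cref{remark:uniqueness:function} forces
\[ g_{i^f}\circ f_i = h_i, \]
because both functions satisfy the containment property $h(A)\subseteq \Phi(A)$ into the same disjoint collection. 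This is the heart of the argument.

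Finally, I would pass to inverse limits. The equalities $g_{i^f}\circ f_i=h_i$, together with the compatibility established in \cref{lemma:step2} for each of $f$, $g$, and $h$, yield a commutative diagram of directed systems indexed over a cofinal subset of $J_a$. Taking inverse limits, and invoking \cref{rem:EquivalentScales} to identify the relevant $\Ends(\cdot,\cdot,J_i)$ with the $\Ends(\cdot,\cdot,\sigma)$ appearing in the statement, produces the required commutative triangle. I expect the main nuisance to be purely bookkeeping: ensuring that the index $i$ ranges over a subset that is simultaneously cofinal in all three directed systems and compatible with the shifts $i\mapsto i^f\mapsto (i^f)^g$; once the constants for $h$ are fixed so that $c\leq d$, this cofinality is routine, and the uniqueness step above is what makes the diagram actually commute.
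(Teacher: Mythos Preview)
Your proposal is correct and follows essentially the same approach as the paper: both arguments reduce to the component level, use the uniqueness principle of \cref{remark:uniqueness:function} (applied to the disjoint collection $E_{(i^f)^g}$) to force $g_{i^f}\circ f_i$ and $(g\circ f)_i$ to agree after a transition, and then pass to inverse limits. The paper packages this as a commutative cube indexed by $i\preceq j$ whose top and bottom faces encode exactly your equality and whose front face consists of transition functions $E_{i^{g\circ f}}\to E_{(i^f)^g}$; your more linear write-up is equivalent. One small point worth tightening: even when the first coordinates of $i^h$ and $(i^f)^g$ agree (i.e.\ $c=d$), the second coordinates generally differ, so you still need a transition $\rho$ there---this is precisely the role of the front face in the paper's cube, and is covered by your parenthetical remark.
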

\begin{proof}
If $i,j\in I_\alpha$ and $i\preceq j$ then  $i^{g\circ f}\preceq (i^g)^f$ and  there is a commutative diagram
\begin{equation}\label{eq:Cube}
\begin{tikzcd} 
    C_i \ar[rr, "f_i"]\ar[dr, "(g\circ f)_i"] \ar[dd] & & D_{i^f}\ar[dr,"(g\circ f)_{i^f}"]\ar[dd] & \\
    & E_{i^{g\circ f}} \ar[rr,crossing over]  & & E_{(i^f)^g}\ar[dd] & \\
    C_{j}\ar[rr,"f_j" near end] \ar[dr, "(g\circ f)_j"']& & D_{j^f}\ar[dr,"(g\circ f)_{j^f}"  near start] & \\
    & E_{j^{g\circ f}}\ar[rr] \arrow[from=uu, crossing over]  & & E_{(j^f)^g} &
\end{tikzcd}
\end{equation}
where all vertical arrows and all arrows in the front face are transition functions, and all other arrows are induced by $f$, $g$ and $g\circ f$ accordingly. Indeed, observe that the back face and side faces are particular cases of \cref{lemma:step2}, the front face is a square diagram of transition functions in the ends-system for $(Z,E)$, so all these faces are commutative diagrams. That the top and bottom faces are commutative diagrams follows from the same argument as in \cref{lemma:step2}.  The proposition follows from the commutative cubes~\eqref{eq:Cube}   taking the corresponding inverse limits.
\end{proof}

\subsection{Filtered ends as a functor}

Consider the category $\mathsf{QPMet}$ whose objects are pairs $(X,C)$ where $X$ is a metric space and $C$ is a subspace;  morphisms $ (X,C)\to (Y,D)$ are  quasi-isometric maps $f\colon X\to Y$ such that $D \subseteq f(C)^{+r}$ for some $r\geq 0$; and the composition law is the standard composition of functions.

\begin{proposition}\label{thm:endsFunctor}
There is a covariant functor
\[ \Ends\colon \mathsf{QPMet} \to \mathsf{Sets}\]
that maps a pair $(X,C)$ to $\Ends(X,C)$.

\end{proposition}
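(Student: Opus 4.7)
The plan is to define, for each morphism $f\colon (X,C) \to (Y,D)$ with chosen quasi-isometric constants $(\lambda,\epsilon)$ and witness $D\subseteq f(C)^{+r}$, the induced function $\Ends(f)\colon \Ends(X,C) \to \Ends(Y,D)$ as the direct limit of the scale-by-scale functions $f_{a,a^f}$ produced in \cref{lemma:step3}, and then to verify functoriality using the compatibilities already established.

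For each $a>0$, set $b_a = a^f = \lambda a + \epsilon$. Let $\eta_b \colon \Ends(Y,D,b) \to \Ends(Y,D)$ and $\iota_a\colon \Ends(X,C,a) \to \Ends(X,C)$ denote the canonical maps into the respective direct limits, and consider the family
\[ \eta_{b_a} \circ f_{a,b_a} \colon \Ends(X,C,a) \to \Ends(Y,D), \qquad a>0. \]
\cref{lemma:step5}, applied to $a \leq c$ with $b=b_a$ and $d=b_c$, yields a commutative square showing that composing $f_{a,b_a}$ with the transition map $\Ends(Y,D,b_a)\to \Ends(Y,D,b_c)$ equals composing the transition $\Ends(X,C,a)\to \Ends(X,C,c)$ with $f_{c,b_c}$. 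Post-composing with $\eta_{b_c}$ and using that $\eta_{b_c}$ factors through every later stage of the target direct ends-system, this family is compatible with the $\varphi_{a,c}$'s. The universal property of the direct limit then delivers a unique function $\Ends(f)\colon \Ends(X,C)\to \Ends(Y,D)$ satisfying $\Ends(f)\circ\iota_a = \eta_{b_a}\circ f_{a,b_a}$ for all $a$.

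Next I would check the two functor axioms. For identity, \cref{prop:IdentityMorphism} gives $\mathsf{Id}_{a,b}=\varphi_{a,b}$, so taking the direct limit yields $\Ends(\mathsf{Id}_{(X,C)})=\mathsf{Id}_{\Ends(X,C)}$. For composition, given $f\colon (X,C)\to (Y,D)$ and $g\colon (Y,D)\to (Z,E)$, first note that $g\circ f$ is a morphism in $\mathsf{QPMet}$: from $E\subseteq g(D)^{+r'}$, $D\subseteq f(C)^{+r}$ and the coarse Lipschitz estimate for $g$ one obtains $E\subseteq (g\circ f)(C)^{+r''}$ for some explicit $r''$. Then \cref{lemma:step6} provides, at each scale $a$, a commutative diagram relating $(g\circ f)_{a,c}$ with $g_{b,d}\circ f_{a,b}$ up to a transition function in $\Ends(Z,E,\cdot)$; post-composition with the canonical maps into $\Ends(Z,E)$ and the uniqueness clause in the universal property of the direct limit then identify $\Ends(g\circ f)$ with $\Ends(g)\circ\Ends(f)$.

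The main obstacle I anticipate is checking that $\Ends(f)$ is independent of the chosen quasi-isometric constants $(\lambda,\epsilon)$ and of the witnessing constant $r$. For two admissible triples, the intermediate scales $b_a$ differ, but both resulting families $\Ends(X,C,a)\to \Ends(Y,D)$ are obtained by post-composing a scale-to-scale function $\Ends(X,C,a)\to \Ends(Y,D,\bullet)$ with $\eta_\bullet$; the uniqueness clauses in \cref{lemma:step1} and \cref{remark:uniqueness:function}, together with the commutativity of transition functions in the target ends-system, force the two resulting limit maps to coincide. Once this well-definedness is in hand, functoriality reduces mechanically to the diagrams proved in \cref{prop:IdentityMorphism} and \cref{lemma:step6}.
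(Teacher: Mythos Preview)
Your proposal is correct and follows essentially the same approach as the paper: define $\Ends(f)$ by taking the direct limit of the scale-wise maps $f_{a,b}$ using the compatibility from \cref{lemma:step5}, then verify the functor axioms via \cref{prop:IdentityMorphism} and \cref{lemma:step6}. You are in fact more careful than the paper, which does not explicitly discuss independence of $\Ends(f)$ from the chosen constants $(\lambda,\epsilon,r)$; your observation that this follows from the uniqueness in \cref{lemma:step1}/\cref{remark:uniqueness:function} together with the target transition maps is a worthwhile addition.
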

\begin{proof}
Let $f\colon (X,C)\to (Y,D)$ be a morphism of $\mathsf{QPMet}$. \cref{lemma:step5} allows us to define the morphism $\Ends(f)\colon \Ends(X,C)\to \Ends(Y,D)$ as follows. For any $a>0$ and $b>a^f$ there is a function $f_{a,b}\colon \Ends(X,C,a)\to \Ends(Y,D,b)$, and then taking direct limits we obtain $\Ends(f)$.
That $\Ends(\mathsf{Id}_{(X,C)})=\mathsf{Id}_{\Ends(X,C)}$ is a consequence of  \cref{prop:IdentityMorphism}. For morphisms $f\colon (X,C)\to (Y,D)$ and $g\colon (Y,D)\to (Z, E)$, that $\Ends(g\circ f)=\Ends(g)\circ \Ends(f)$ is a consequence of \cref{lemma:step6}. 
\end{proof}

The following proposition follows directly from \cref{prop:UltimaDeVerdad}.

\begin{proposition}\label{prop:UltimaDeVerdad2}
Let $f\colon X\to X$  be a function at  finite distance from the identity. Then $\Ends(f)$ is is the identity map of $\Ends(X,C)$.
\end{proposition}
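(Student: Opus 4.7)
The plan is to derive this statement as an immediate consequence of \cref{prop:UltimaDeVerdad}, which already contains the essential content at each finite scale; what remains is to pass to the direct limit that defines $\Ends(f)$.

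First I would observe that, since $f$ is at finite distance $r$ from the identity, it is a $(1,2r)$-quasi-isometric map $X\to X$ and carries $C$ into $f(C)^{+r}\supseteq C$; hence $f\colon (X,C)\to (X,C)$ is a morphism in $\mathsf{QPMet}$ and $\Ends(f)$ is defined via \cref{thm:endsFunctor}. With the quasi-isometry constants $(\lambda,\epsilon)=(1,2r)$, the assignment $a\mapsto a^f=a+2r$ is an order-preserving shift, so for every $a>0$ one may pick $b\geq a^f$ and the induced function $f_{a,b}\colon \Ends(X,C,a)\to\Ends(X,C,b)$ is defined by \cref{lemma:step3}.

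The core step is then to apply \cref{prop:UltimaDeVerdad} at each such pair $(a,b)$: it gives $f_{a,b}=\varphi_{a,b}$. By the construction of $\Ends(f)$ in the proof of \cref{thm:endsFunctor}, the map $\Ends(f)$ is obtained as the direct limit (over $a>0$, with $b$ chosen cofinally above $a^f$) of the functions $f_{a,b}$; by \cref{prop:IdentityMorphism} together with the definition of the direct limit $\Ends(X,C)=\varinjlim_a \Ends(X,C,a)$, the identity map $\mathsf{Id}_{\Ends(X,C)}$ is the direct limit of the transition functions $\varphi_{a,b}$. Since the two directed families of functions agree entry by entry, their colimits coincide, which yields $\Ends(f)=\mathsf{Id}_{\Ends(X,C)}$.

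I do not anticipate any genuine obstacle, since all of the content was already absorbed into \cref{prop:UltimaDeVerdad}. The only minor point worth double-checking is that the pairs $(a,b)$ with $b\geq a^f=a+2r$ form a cofinal subsystem of the directed set indexing both limits, so that the limit is computed by restriction to this subsystem; this is immediate from the fact that $a^f$ depends on $a$ only by an additive constant.
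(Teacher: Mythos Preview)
Your proposal is correct and takes essentially the same approach as the paper, which simply states that the proposition ``follows directly from \cref{prop:UltimaDeVerdad}'' without further elaboration. You have merely spelled out the passage to the direct limit that the paper leaves implicit.
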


\subsection{Quasi-isometry invariance of Filtered Ends}

\begin{proposition}\label{cor:qiinvariance:relativeends}
Let $X$ and $Y$ be  metric spaces, and $C\subseteq X$ and $D\subseteq Y$. If $f\colon X\to Y$ is a quasi-isometry such that $\Hdist(f(C),D)$ is finite, then $\Ends(f)\colon \Ends(X,C)\to\Ends(Y,D)$ is a bijection.
\end{proposition}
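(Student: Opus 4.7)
The plan is to exhibit $\Ends(f)$ as an isomorphism in $\mathsf{Sets}$ by producing a two-sided inverse, using the functoriality established in \cref{thm:endsFunctor} together with \cref{prop:UltimaDeVerdad2}. Specifically, I would take a quasi-inverse $g\colon Y\to X$ of $f$ (which exists by \cref{def:quasiIsometry}) and show that $g$ underlies a morphism $(Y,D)\to (X,C)$ in $\mathsf{QPMet}$. Then the compositions $g\circ f$ and $f\circ g$ are at finite distance from the respective identity maps, and hence \cref{prop:UltimaDeVerdad2} and functoriality force $\Ends(g)$ to be a two-sided inverse of $\Ends(f)$.

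The first step is to check that $f\colon (X,C)\to (Y,D)$ is itself a morphism in $\mathsf{QPMet}$. This is immediate: the assumption $\Hdist(f(C),D)<\infty$ gives some $r\geq 0$ with $D\subseteq f(C)^{+r}$ (and also $f(C)\subseteq D^{+r}$), which is exactly the morphism condition.

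The second step, which is the only place where there is anything to verify, is showing that $g\colon (Y,D)\to (X,C)$ is a morphism, i.e.\ that $C\subseteq g(D)^{+s}$ for some $s\geq 0$. Since $g\circ f$ is at distance at most $C_0$ from $\mathsf{Id}_X$, every $x\in C$ satisfies $\dist(x,g(f(x)))\leq C_0$, so $C\subseteq g(f(C))^{+C_0}$. Using $f(C)\subseteq D^{+r}$ together with the fact that $g$ is $(L_0,C_0)$-coarse Lipschitz, one gets $g(f(C))\subseteq g(D^{+r})\subseteq g(D)^{+(L_0r+C_0)}$, so $C\subseteq g(D)^{+(L_0r+2C_0)}$, as required.

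With both $f\colon (X,C)\to (Y,D)$ and $g\colon (Y,D)\to (X,C)$ established as morphisms in $\mathsf{QPMet}$, the compositions $g\circ f\colon (X,C)\to (X,C)$ and $f\circ g\colon (Y,D)\to (Y,D)$ are morphisms at bounded distance from the respective identity maps. By \cref{thm:endsFunctor}, $\Ends(g)\circ \Ends(f)=\Ends(g\circ f)$ and $\Ends(f)\circ \Ends(g)=\Ends(f\circ g)$, while \cref{prop:UltimaDeVerdad2} identifies each of these with the identity map on the corresponding set of filtered ends. Hence $\Ends(f)$ is a bijection with inverse $\Ends(g)$. There is no real obstacle here beyond the bookkeeping of checking $g$ is a $\mathsf{QPMet}$-morphism; the heavy lifting was already done in building the functor $\Ends$ and in \cref{prop:UltimaDeVerdad2}.
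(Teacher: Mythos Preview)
Your proposal is correct and follows essentially the same approach as the paper's proof: take a quasi-inverse $g$, observe that $g\circ f$ and $f\circ g$ are at bounded distance from the identities, and use functoriality together with \cref{prop:UltimaDeVerdad2} to conclude that $\Ends(g)$ is a two-sided inverse of $\Ends(f)$. You are in fact slightly more careful than the paper, which does not spell out the verification that $g\colon (Y,D)\to (X,C)$ is a morphism in $\mathsf{QPMet}$; this is needed to apply functoriality in the form $\Ends(g\circ f)=\Ends(g)\circ\Ends(f)$, and your argument for $C\subseteq g(D)^{+s}$ is exactly the right check.
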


\begin{proof}
Consider $g\colon Y\to X$ a quasi-isometry such that $g\circ f$ and $f\circ g$ are at finite distance from the corresponding identity functions.  Note that  $g\circ f$ and $f\circ g$  induced morphisms $(X, C) \to (X,C)$ and $(Y,D) \to (Y,D)$ of $\mathsf{QPMet}$ respectively. By \cref{thm:endsFunctor} both $\Ends(g)\circ \Ends(f)$ and $\Ends(f)\circ \Ends( g)$ are the identity morphisms of $\Ends(X,C)$ and $\Ends(Y,D)$ respectively and in particular they are bijections. Therefore $\Ends(f)$ is a bijection.
\end{proof}

\bibliographystyle{alpha} 
\bibliography{myblib}
\end{document}